
\documentclass[12pt]{amsart}
\usepackage{epsfig,color}

\headheight=6.15pt \textheight=8.75in \textwidth=6.5in
\oddsidemargin=0in \evensidemargin=0in \topmargin=0in



\makeatother

\theoremstyle{definition}

\def\fnum{equation} 
\newtheorem{Thm}[\fnum]{Theorem}
\newtheorem{Cor}[\fnum]{Corollary}

\newtheorem{Lem}[\fnum]{Lemma}

\newtheorem{Def}[\fnum]{Definition}
\newtheorem{Exa}[\fnum]{Example}

\numberwithin{equation}{section}

\newcommand{\Vol}{{\text{Vol}}}
\newcommand{\V}{{\text{V}}}

\newcommand{\Ric}{{\text{Ric}}}

\newcommand{\Hess}{{\text {Hess}}}

\def\RR{{\bold R}}

\newcommand{\dv}{{\text {div}}}
\newcommand{\e}{{\text {e}}}

\newcommand{\cC}{{\mathcal{C}}}

\newcommand{\cF}{{\mathcal{F}}}
\newcommand{\cW}{{\mathcal{W}}}
\newcommand{\cL}{{\mathcal{L}}}

\newcommand{\eqr}[1]{(\ref{#1})}

\title[Monotonicity formulas I]{New monotonicity formulas for Ricci curvature and applications; I}

\author{Tobias Holck Colding}%
\address{MIT, Dept. of Math.\\
77 Massachusetts Avenue, Cambridge, MA 02139-4307.}

\thanks{The author
was partially supported by NSF Grant DM  
11040934 and NSF FRG grant DMS 
 0854774}


\email{colding@math.mit.edu}

\begin{document}

\maketitle

\begin{abstract}
We prove three new monotonicity formulas for manifolds with a lower Ricci curvature bound and show that they are connected to rate of convergence to tangent cones.  In fact, we show that the derivative of each of these three monotone quantities is bounded from below in terms of the Gromov-Hausdorff distance to the nearest cone.  The monotonicity formulas are related to the classical Bishop-Gromov volume comparison theorem and Perelman's celebrated monotonicity formula for the Ricci flow.  We will explain the connection between all of these.

Moreover, we show that these new monotonicity formulas are linked to a new sharp gradient estimate for the Green's function that we prove.  This is parallel to that Perelman's monotonicity is closely related to the sharp gradient estimate for the heat kernel of Li-Yau. 

In \cite{CM4} we will use the monotonicity formulas we prove here to show uniqueness of certain tangent cones of Einstein manifolds and in \cite{CM3} we will prove a number of related monotonicity formulas.

Finally, there are obvious parallels between our monotonicity and the positive mass theorem of Schoen-Yau and Witten.

\end{abstract}

\section{Introduction}

The results we will give holds for manifolds with any given lower bound for the Ricci curvature and are new and of interest both for small and large balls.  They are effective in the sense that the estimates we give do not depend on the particular manifold but only on some quantitative behavior like dimension and lower bound for Ricci curvature.  This allows us to pass these properties through to possible singular limits.  For simplicity we will concentrate our discussion on manifolds with nonnegative Ricci curvature and large balls though our results holds with obvious changes for small balls and any other fixed lower bound for the Ricci curvature.  Moreover, our results are local and holds even for balls in manifolds as long as the Ricci curvature is bounded from below on those balls.

A key property of Ricci curvature is monotonicity of ratio of volumes of balls.  For $n$-dimensional manifolds with nonnegative Ricci curvature Bishop-Gromov's volume comparison theorem, \cite{GLP}, \cite{G}, asserts that the relative volume
\begin{equation}  \label{e:bishopgromov1}
\Vol(r)=r^{-n}\,\Vol (B_r(x))\downarrow
\end{equation}
is monotone nonincreasing in the radius $r$ for any fixed $x\in M$.    As $r$ tend to $0$ this quantity on a smooth manifold converges to the volume of the unit ball in $\RR^n$ denoted by $\Vol (B_1(0))$ and as $r$ tends to infinity it converges to a nonnegative number $\V_M$.  If $\V_M>0$, then we say that $M$ has Euclidean volume growth.      An application of monotonicity of relative volume is Gromov's compactness theorem, \cite{GLP}, \cite{G}.   When $M$ has nonnegative Ricci curvature this compactness implies that any sequence of rescaling $(M,r_i^{-2}g)$, where $r_i\to \infty$ has a subsequence that converges in the Gromov-Hausdorff topology to a length space.   Any such limit is said to be a tangent cone at infinity of $M$.  

  A geometric property of Ricci curvature that will play a key role in the discussion below, both as a motivation and in some of the applications, comes from \cite{ChC1}.  It say that if $M$ has nonnegative Ricci curvature and $\Vol (r)$ is almost constant between say $r_0$ and $2r_0$, then the annulus is Gromov-Hausdorff close to a corresponding annulus in a cone.  
In particular, if $M$ has Euclidean volume growth, then any tangent cone at infinity of $M$ is a metric cone\footnote{Without the assumption of Euclidean volume growth tangent cones need not be metric cones by \cite{ChC2} and need not even be polar spaces by \cite{M2}.}.  In general our open manifolds of nonnegative Ricci curvature will be assumed to have faster than quadratic volume growth or more precisely be nonparabolic. 

A complete manifold is said to be nonparabolic if it admits a positive Green's function. Otherwise, it is said to be parabolic.  By a result of Varopoulos, \cite{V},   an open manifold with nonnegative Ricci curvature is nonparabolic if and only if 
\begin{align}
\int_1^{\infty} \frac{r}{\Vol (B_r(x))}\,dr<\infty\, .
\end{align}
When $M$ is nonparabolic, then we let $G$ be the minimal positive Green's function.  Combining the result of Varopoulos mentioned above with work of  Li-Yau, \cite{LY}, gives that if $M$ has nonnegative Ricci curvature and is nonparabolic, then for $x\in M$ fixed $G=G(x,\cdot)\to 0$ at infinity.  In other words, the function
\begin{align}
b=G^{\frac{1}{2-n}}
\end{align}
is well defined and proper; cf. \cite{CM1}, \cite{CM2}. 

To put our results in perspective we will briefly recall some of the most relevant monotonicity formulas for the current discussion.   

The Bishop-Gromov volume comparison theorem, \cite{GLP}, \cite{G}, was described above.   It asserts that the ratio of volume of a ball in a manifold with nonnegative Ricci curvature centered at a fixed point to the volume of a Euclidean ball the same radius is monotone nonincreasing in the radius.  This parallels the monotonicity for minimal surfaces where the same quantity is monotone; however for minimal surfaces the ratio is monotone nondecreasing.  Moreover, for minimal surfaces balls are intersections of extrinsic balls with the surface as opposed to intrinsic balls in the Bishop-Gromov.    Either of these monotonicity formulas follows from integrating the Laplacian of the distance squared to a point.  In one case it is the extrinsic distance; in the other the intrinsic distance.  In fact, in all of the monotonicity formulas that we discuss below monotonicity will come from integrating the Laplacian of appropriately chosen functions.

For mean curvature flow an important monotone quantity was found by Huisken, \cite{H}.  Huisken integrated a backward extrinsic heat kernel over the evolving hypersurface and showed that under the mean curvature flow this quantity is monotone nonincreasing.  This is a parabolic monotonicity where the backward heat kernel is integrated over the entire evolving hypersurfaces and thus the quantity is global.

For Ricci flow Perelman found two new quantities, the $\cF$ and $\cW$ functional, and proved that $\cW$ is monotone, \cite{P1}.  Even for static solutions to the Ricci flow, that is, for Ricci flat manifolds, the $\cF$ and $\cW$ functionals are interesting and the monotonicity of $\cW$ is nontrivial. In fact, if one omit the scalar curvature term in the $\cW$-functional, then it is even monotone for manifolds with nonnegative Ricci curvature as was pointed out by Lei Ni, \cite{N1}-\cite{N3}. Moreover, as was known already to Perelman, the monotonicity of $\cW$ is related to both a sharp log Sobolev inequality and a sharp gradient estimate for the heat kernel $H$.  Because of this it is instructive to first recall the sharp gradient estimate of Li-Yau, \cite{LY}.  This asserts that on a manifold with nonnegative Ricci curvature 
\begin{align}
t\,\left(\frac{|\nabla H|^2}{H^2}-\frac{H_t}{H}\right)-\frac{n}{2}=-t\,\Delta\,\log H -\frac{n}{2}\leq 0\ .
\end{align}
Integrating this over the manifold against the heat kernel as a weight gives the $\cF$-functional for Ricci flat manifolds and what we call the $F$-function on a fixed manifold with nonnegative Ricci curvature, see Lei Ni, \cite{N1}-\cite{N3}, 
\begin{align}
F(t)&=t\int_M\left(\frac{|\nabla H|^2}{H^2}-\frac{H_t}{H}\right)\,H\,d\Vol-\frac{n}{2}=-t\int_M\Delta\,\log H\,H\,d\Vol -\frac{n}{2}\notag\\
&=t\int_M|\nabla \log H|^2\,Hd\Vol -\frac{n}{2}\leq 0\, .
\end{align}
Here the last equality comes from integration by parts.  Note that the $F$-function is a function of two variables: $t$ and the `center' $x$ though usually $x$ is fixed in which case we think of it as a function only of $t$.  The dependence of $x$ comes from the heat kernel $H=H(x,\cdot,t)$.  It is not hard to see that $\frac{F}{t}$ is the derivative of the Shannon type\footnote{$S$ is also sometimes referred to as the Nash entropy; see, for instance \cite{N1}-\cite{N3}.} entropy\footnote{We use a slightly different normalization in both $F$ and $S$ than the standard one; however this normalization does not affect $W$.  Our normalization is chosen so that on Euclidean space both $F$ and $S$ vanishes.}
\begin{align}
S(t)=-\int_M\log H\,H\,d\Vol-\frac{n}{2}\,\log (4\pi\,t)-\frac{n}{2}\, .
\end{align}
Perelman went on and defined 
\begin{align}
W=F+S
\end{align} 
and showed that 
\begin{align}
W\downarrow
\end{align}
is monotone nonincreasing; cf. Lei Ni, \cite{N1}-\cite{N3} and Section \ref{s:perelman} where we discuss these quantities in greater detail.

Our three new monotonicity theorems, see Section \ref{s:monotone} for the precise statements, comes from a new sharp gradient estimate for the Green's function $G$ on manifolds with nonnegative Ricci curvature.  This new sharp gradient estimate asserts that $b=G^{\frac{1}{2-n}}$ satisfies
\begin{align}  \label{e:newgradient}
|\nabla b|^2-1=\frac{\Delta b^2}{2n}-1\leq 0\, ;
\end{align}
see Theorem \ref{t:sharpGreen}.   Moreover, if at one point in $M\setminus \{x\}$ we have equality in this inequality, then the manifold is flat Euclidean space.    In addition, we also show a sharp asymptotic gradient estimate of $b$ for $r\to \infty$; see Theorem \ref{t:asympsharpGreen}.
Integrating \eqr{e:newgradient} over the level sets of $b$ against the weight $r^{1-n}\,|\nabla b|$ gives our basic new quantity $A$ that in our (elliptic) monotonicity formulas plays the role of Perelman's $F$-function.  Namely, set 
\begin{align}
A(r)&=r^{1-n}\int_{b=r}\left(|\nabla b|^2-1\right)\,|\nabla b|=r^{1-n}\int_{b=r} |\nabla b|^3-\Vol (\partial B_1(0))\\
&=r^{1-n}\int_{b=r} |\nabla b|^3-\Vol (\partial B_1(0))=\frac{r^{1-n}}{2n}\int_{b=r} \Delta b^2\,|\nabla b|-\Vol (\partial B_1(0))\leq 0\, ;\notag
\end{align}
where $B_1(0)\subset \RR^n$ is the unit ball.  NOTE that in the main body of this paper $A$ and $V$ DIFFERS from the ones defined here in the introduction by the constants $\Vol (\partial B_1(0))$ and $\Vol (B_1(0))$ respectively, as in the later sections we have NOT subtracted their Euclidean values.  All of our monotonicity formulas involves $A$.  In particular, we show that 
\begin{align}
A\downarrow\text{ and }V\downarrow
\end{align}
are monotone nonincreasing (see Corollary \ref{c:properties}; and also Theorem \ref{t:supnoninc} for a related statement), where 
\begin{align}
V(r)=r^{-n}\int_{b\leq r}|\nabla b|^4-\Vol (B_1(0))\, .
\end{align}
The monotonicity of $V$ is parallel to the Bishop-Gromov volume comparison theorem as stated in \eqr{e:bishopgromov1}.  The standard proof of the Bishop-Gromov volume comparison (using the Laplacian comparison theorem applied to the distance to a fixed point) go over first showing that the ratio
\begin{align}   \label{e:bishopgromov2}
r^{1-n}\Vol (\partial B_r(x))\downarrow
\end{align}
is monotone nonincreasing and then use this to show \eqr{e:bishopgromov1}.  The monotonicity of $A$ is the parallel of \eqr{e:bishopgromov2}.
Note that it follows easily from the coarea formula, see Lemma \ref{l:coarea}, that $r\,V'=A-n\,V$; thus the monotonicity of $V$ implies that $A\leq n\,V$.  Therefore an interesting (and natural) question would for instance be wether or not the gap between $A$ and $n\,V$ widens.  The monotonicity of both $A$ and $V$ are byproducts of our main monotonicity theorems.
The first of our three main monotonicity formulas, see Theorem \ref{t:mainmono}, show that 
\begin{align}
2(n-1)V-A\downarrow
\end{align} 
is monotone nondecreasing and gives an exact (and useful) formula for the derivative.
  
The monotonicity of Perelman's $W$-function is easily seen to be equivalent to that $t\,F$ is monotone; this follows from that $W=F+S$ and $S'=\frac{F}{t}$.  The parallel to this in our setting is that one of our monotonicity formulas, see Theorem \ref{t:secondmono1},  asserts that 
\begin{align}
r^{2-n}\,A\uparrow
\end{align}
 is monotone nondecreasing.

One of the major points of this article is that not only are the quantities we define monotone, but their derivatives are something useful that monotonicity helps us bound.  In fact, this was the starting point of this article and is one of the advantages of our new monotonicity formulas compared with say the classical Bishop-Gromov volume comparison theorem.  For instance, for the derivative of $2(n-1)\, V-A$ we show in Theorem \ref{t:mainmono} that
\begin{align}
       \left[2(n-1)\, V-A\right]'(r)= -\frac{1}{2}\,r^{-1-n}\int_{b\leq r}\left(\left|\Hess_{b^2}-\frac{\Delta b^2}{n}\,g\right|^2+\Ric (\nabla b^2,\nabla b^2)\right)\, ,
       \end{align}
       and for the derivative of $r^{2-n}\,A$ we show in Theorem \ref{t:secondmono1} that
       \begin{align}    
       \left(r^{2-n} \, A\right)'(r)=\frac{r^{1-n}}{2}\int_{b\leq r}\left(\left|\Hess_{b^2}-\frac{\Delta b^2}{n}\,g\right|^2+\Ric (\nabla b^2,\nabla b^2)\right)
       \,b^{-n}\, .
       \end{align}
       In addition to these new monotonicity formulas and gradient estimates for the Green's function, then we estimate from below the derivative of our formulas in terms of the Gromov-Hausdorff distance to the nearest cone; see Theorems \ref{t:fund2a} and \ref{t:fund3a}.  
For instance, loosely speaking, Theorem \ref{t:fund2a} shows that
\begin{align}
      -C\, \left(2(n-1)\, V-A\right)'(r)\geq  \frac{\Theta_r^{2}}{r}\, ;
       \end{align}
where $\Theta_r$ is the scale invariant Gromov-Hausdorff distance between $B_r(x)$ and the ball of radius $r$ in the nearest cone centered at the vertex.   The constant $C$ depends on the dimension of the manifold, the lower bound for the Ricci curvature, and also on a positive lower bound for the volume of $B_r(x)$.    The actual statement of the theorem is slightly more complicated as in reality the right hand side of this inequality is not to the power $2$ rather to the slightly worse power $2+2\epsilon$ for any $\epsilon> 0$, and the constant $C$ also depends on $\epsilon$; cf. \cite{CN1}.
We prove this lower bound, see Theorem \ref{t:fund} and Corollary \ref{c:fund}, for the derivative using \cite{ChC1}; cf. also \cite{C1}--\cite{C3}.         
We also prove a similar lower bound for the derivative of Perelman's $F$-function though in that case it is a weighted distance to the nearest cone where the nearest cone is allowed to change from scale to scale.

In \cite{CM4} we will use the monotonicity formulas we prove here to show uniqueness of certain tangent cones of Einstein manifolds.  In a second paper \cite{CM3} we will show further monotonicity formulas and discuss other applications.

Finally, we note that one may think of our new monotonicity formulas as enhanced versions of the classical Bishop-Gromov volume comparison theorem.

  \section{Monotonicity formulas}  \label{s:monotone}
  
  In this section $M^n$ will be a smooth complete $n$-dimensional manifold where $n\geq 3$.  We will later be particularly interested in the case where $M$ has nonnegative Ricci curvature, however the computations that follows holds on any smooth manifold.

  Suppose that $G$ is the Green's function\footnote{Our Green's functions will be normalized so that on Euclidean space of dimension $n\geq 3$ the Green's function is $r^{2-n}$} on a manifold $M$; fix $x\in M$ and set $G=G_x=G(x,\cdot)$.  One sometimes say that $G=G_x$ is the Green's function with pole at $x$.  Following \cite{CM1}, \cite{CM2}, we set\footnote{The normalization of $b$ that we use here differs from that used in \cite{CM1}, \cite{CM2} by a constant.}  
  \begin{align}
  b=G^{\frac{1}{2-n}}\, ;
     \end{align}
     then 
 \begin{align}
 \Delta b^2=2n\,|\nabla b|^2\, .
     \end{align}
     
     We will use a number of times below that if, as in  \cite{CM1}, \cite{CM2},  we set
  \begin{align}
  I_v(r)=r^{1-n}\int_{b=r}v\,|\nabla b|=\frac{1}{n-2}\int_{b=r}v\,|\nabla G|\, ,
     \end{align}
     then 
  \begin{align}
  I'_v=r^{1-n}\int_{b=r}v_n=r^{1-n}\int_{b\leq r}\Delta\, v\, .
     \end{align}
     Here $v_n$ is the (outward) normal derivative of the function $v$; normal to the boundary of $\{b\leq r\}$.  In particular, the function 
      \begin{align}
  I_1(r)=r^{1-n}\int_{b=r}|\nabla b|\, ,
     \end{align}
     is constant in $r$.

\subsection{The `area' and the `volume'}     
     
     Define nonnegative functions $A(r)$ and $V(r)$ by
   \begin{align}
  A(r)&=r^{1-n}\int_{b=r}|\nabla b|^3\, ,\\
     V(r)&=r^{-n} \int_{b\leq r}|\nabla b|^4\, .
  \end{align}
  Note that these quantities differs from the ones we defined in the introduction by constants since here, unlike the introduction, we have not subtracted their Euclidean values
  
   The next simple lemma will be used three places:  First to compute the limit of $A(r)$ and $I_1$ as $r\to 0$; second in the proof of the second monotonicity theorem where the lemma also enters via the same limit of $A$ and third in the sharp gradient estimate for the Green's function.
    
\begin{Lem}  \label{l:ref}
Let $M$ be a smooth manifold with $n\geq 3$, then
\begin{align}
\lim_{r\to 0}\sup_{\partial B_r(x)}\left|\frac{b}{r}- 1\right|&=0\, , \label{e:ref1}\\
\lim_{r\to 0}\sup_{\partial B_r(x)}\left||\nabla b|^2-1\right|&= 0\, ,\label{e:ref2}\\
\lim_{r\to 0}A(r)=\lim_{r\to 0}I_1(r)&=\Vol (\partial B_1(0))\, ,\label{e:ref3}\\
\lim_{r\to 0}V(r)&=\Vol (B_1(0))\, .\label{e:ref4}
\end{align}
\end{Lem}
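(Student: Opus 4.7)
The plan is to reduce everything to the classical asymptotic behavior of the Green's function at its pole: on any smooth Riemannian manifold, $G(x,\cdot)$ agrees to leading order, together with its gradient, with the Euclidean Green's function $d(x,\cdot)^{2-n}$ of the tangent space. Concretely, I would invoke the standard parametrix/Hadamard construction in normal coordinates at $x$: since the metric satisfies $g_{ij}(y)=\delta_{ij}+O(r^{2})$ with $r=|y|$, one obtains a decomposition $G(y)=r^{2-n}+E(y)$ with $|E(y)|=O(r^{3-n})$ and $|\nabla E(y)|=O(r^{2-n})$ (the normalization is fixed by the convention that the Euclidean Green's function is $r^{2-n}$). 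Consequently, $G(y)\cdot d(x,y)^{n-2}\to 1$ and $|\nabla G|(y)/((n-2)\,d(x,y)^{1-n})\to 1$ uniformly as $y\to x$. This is the only nontrivial analytic input, and I expect it to be the main obstacle; everything else follows by elementary manipulation.

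Given these asymptotics, \eqref{e:ref1} is immediate from $(b/r)^{2-n}=G(y)\cdot r^{n-2}\to 1$. Differentiating $b=G^{1/(2-n)}$ yields $\nabla b=-\tfrac{1}{n-2}\,b^{n-1}\,\nabla G$, so
\[
|\nabla b|(y)=\Bigl(\tfrac{b(y)}{r}\Bigr)^{n-1}\cdot\frac{|\nabla G|(y)}{(n-2)\,r^{1-n}}\longrightarrow 1
\]
uniformly on $\partial B_r(x)$ as $r\to 0$, which is \eqref{e:ref2}.

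For \eqref{e:ref3}, I would use two facts: $I_1(r)$ is constant in $r$ (apply the displayed formula $I'_v=r^{1-n}\int_{b\leq r}\Delta v$ to $v\equiv 1$), so its value equals $\lim_{r\to 0} I_1(r)$; and for small $r$, \eqref{e:ref1} guarantees $\{b=r\}$ is a smooth hypersurface contained in a thin annulus around $\partial B_r(x)$ whose $(n-1)$-area converges to $\omega_{n-1}\,r^{n-1}$ (by the standard normal-coordinate expansion of geodesic sphere areas). Combined with \eqref{e:ref2}, this gives $I_1(r),\,A(r)\to\omega_{n-1}=\Vol(\partial B_1(0))$. Finally, the coarea formula gives $V(r)=r^{-n}\int_{0}^{r} s^{n-1}A(s)\,ds$, and L'H\^opital together with $A(s)\to\omega_{n-1}$ yields $V(r)\to\omega_{n-1}/n=\omega_n=\Vol(B_1(0))$, proving \eqref{e:ref4}.
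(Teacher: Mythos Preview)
Your treatment of \eqref{e:ref1} and \eqref{e:ref2} is fine and matches the paper's approach: the paper cites Gilbarg--Serrin for exactly the asymptotics $G(y)=d(x,y)^{2-n}(1+o(1))$ and $|\nabla G|(y)=(n-2)\,d(x,y)^{1-n}(1+o(1))$ that your parametrix argument reproduces.

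There is a genuine gap in your argument for \eqref{e:ref3}. You write that \eqref{e:ref1} places $\{b=r\}$ in a thin annulus about $\partial B_r(x)$, whose area is $\sim\omega_{n-1}r^{n-1}$, and that ``combined with \eqref{e:ref2}'' this gives the limit. But $I_1(r)$ and $A(r)$ are integrals over the level set $\{b=r\}$, not over the geodesic sphere, and lying in a thin annulus does not control the $(n-1)$-area of $\{b=r\}$: Hausdorff closeness of hypersurfaces does not imply closeness of areas. You would need to upgrade to $C^1$ closeness---i.e.\ show $\nabla b\to\nabla r$ (not just $|\nabla b|\to 1$) and hence that $\{b=r\}$ is a small $C^1$ graph over a geodesic sphere. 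Your parametrix estimates are in fact strong enough to give this directional information, but you have not invoked it, and \eqref{e:ref2} alone is insufficient.

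The paper sidesteps this difficulty entirely. Since $I_1$ is constant, it suffices to compute it by any convenient method; the paper uses the coarea formula to convert to a \emph{volume} integral:
\[
\int_{b\leq r}|\nabla b|^2=\int_0^r\int_{b=s}|\nabla b|\,ds=\int_0^r s^{n-1}I_1\,ds=\frac{r^n}{n}\,I_1,
\]
whence $I_1=n\,r^{-n}\int_{b\leq r}|\nabla b|^2$. Now only the $n$-volume of $\{b\leq r\}$ and the values of $|\nabla b|^2$ on it are needed, and these are controlled directly by \eqref{e:ref1}, \eqref{e:ref2} without any level-set area estimate. This is the cleaner route, and it is worth noting how it trades a delicate surface-measure statement for an easy solid-volume one. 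Your argument for \eqref{e:ref4} via $V(r)=r^{-n}\int_0^r s^{n-1}A(s)\,ds$ is fine once \eqref{e:ref3} is in hand.
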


\begin{proof}
In \cite{GS} it was shown that for the Green's function with pole at $x$ 
\begin{align}
G(y)&=d^{2-n}(x,y)\,(1+o(1))\, ,\\
|\nabla G(y)|&=(n-2)\,d^{1-n}(x,y)\,(1+o(1))\, ,
\end{align}
where $o(1)$ is a function with $o(y)\to 0$ as $y\to x$. From this the first two clams easily follows.

To see \eqr{e:ref3} observe first that it follows from \eqr{e:ref2} that $I$ and $A$ has the same limit.  It is therefore enough to show that the limit of $I$ is $=\Vol (\partial B_1(0))$.  To see this use that $I$ is constant in $r$ together with the coarea formula to rewrite $I$ as 
 \begin{align}
     \int_{b\leq r}|\nabla b|^2=\int_{0}^r\int_{b= s}r^{n-1}\,I_1(s)=\frac{r^{n}\,I_1(1)}{n}\, ,
  \end{align}
  and thus
  \begin{align}
    \lim_{r\to 0}I_1(r)=I_1(1)=n\, \lim_{r\to 0} r^{-n}\int_{b\leq r}|\nabla b|^2=\Vol (\partial B_1(0))\, .
  \end{align}
 Here the last equality followed from \eqr{e:ref1} and \eqr{e:ref2}.
 
 Finally, \eqr{e:ref4} follows easily from \eqr{e:ref1} and \eqr{e:ref2}.
\end{proof}
  
  Moreover, we have the following:
  
  \begin{Lem}  \label{l:coarea}
 \begin{align}
    V'(r)=\frac{1}{r}\left(A(r)-n\,V(r)\right)\, .
       \end{align}
  \end{Lem}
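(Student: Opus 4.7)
The plan is to apply the coarea formula to rewrite the integral defining $V$ as an iterated integral and then differentiate. Since $|\nabla b|$ is the norm of the gradient of the function whose level sets we integrate over, the coarea formula gives
\begin{equation*}
\int_{b \leq r} |\nabla b|^4 = \int_0^r \int_{b=s} |\nabla b|^3 \, ds,
\end{equation*}
so that
\begin{equation*}
V(r) = r^{-n}\int_0^r \int_{b=s} |\nabla b|^3\, ds = r^{-n}\int_0^r s^{n-1}\,A(s)\, ds,
\end{equation*}
using the definition of $A$ to recognize the inner integral as $s^{n-1}A(s)$.

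Next I would differentiate this expression in $r$ using the product rule (treating $r^{-n}$ and the integral as the two factors). The derivative of $r^{-n}$ contributes $-nr^{-n-1}$ times the integral, which equals $-\tfrac{n}{r}V(r)$. The derivative of the integral with respect to its upper limit, by the fundamental theorem of calculus, contributes $r^{-n}\cdot r^{n-1}A(r) = \tfrac{1}{r}A(r)$. Combining:
\begin{equation*}
V'(r) = -\frac{n}{r}\,V(r) + \frac{1}{r}\,A(r) = \frac{1}{r}\bigl(A(r) - n\,V(r)\bigr),
\end{equation*}
which is the claimed identity.

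There is essentially no obstacle here: the only minor regularity point is justifying the use of the coarea formula and the fundamental theorem of calculus. For almost every $r>0$ the level set $\{b = r\}$ is smooth (by Sard's theorem applied to $b$, which is smooth away from $x$ by the regularity of $G$ away from the pole), so $A(s)$ is continuous for a.e.\ $s$ and the differentiation is valid. The behavior near $r = 0$ is harmless because $V$ extends continuously there by Lemma \ref{l:ref}. Thus the argument reduces to a one-line coarea computation followed by differentiation under the integral sign.
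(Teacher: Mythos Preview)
Your proof is correct and follows exactly the same approach as the paper: rewrite $V$ via the coarea formula as $r^{-n}\int_0^r s^{n-1}A(s)\,ds$ and differentiate. The paper states this in one line and leaves the differentiation implicit; your version simply spells out the product rule step.
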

  
  \begin{proof}
  By the coarea formula we can rewrite $V(r)$ as 
 \begin{align}
     V(r)=r^{-n} \int_{-\infty}^r\int_{b= s}|\nabla b|^3\, .
  \end{align}
  From this the lemma easily follows.
  \end{proof}
  
  We will later see that on any manifold with nonnegative Ricci curvature the gradient of $b$ is bounded by some universal constant depending only on the dimension; see Lemma \ref{l:cLsub1} for a gradient bound and Theorem \ref{t:sharpGreen} for the eventual sharp bound.  Together with the next lemma this implies that both $A$ and $V$ are bounded.  We will then come back later and use our main monotonicity theorem to show that both $A$ and $V$ are monotone nonincreasing on a manifold with nonnegative Ricci curvature and hence, in particular, they are bounded by their values as $r\to 0$.  
  
  \begin{Lem}
  If $|\nabla b|\leq C$, then 
   \begin{align}
       A&\leq C^2\, \Vol (\partial B_1(0))\, ,\\
         V&\leq \frac{C^2}{n}\, \Vol (\partial B_1(0))=C^2\, \Vol (B_1(0))\, .
       \end{align}
    \end{Lem}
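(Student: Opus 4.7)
The plan is to bound $A$ first and then bootstrap via the coarea formula to bound $V$. Neither step should require more than elementary manipulations, and no new geometric input is needed beyond the identity $I_1(r)=\Vol(\partial B_1(0))$ established in Lemma \ref{l:ref}.

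For $A$, the idea is simply to peel off two factors of $|\nabla b|$ and replace them by $C^2$, reducing the integral to $I_1$. Explicitly, since $|\nabla b|\leq C$ pointwise,
\begin{equation*}
A(r)=r^{1-n}\int_{b=r}|\nabla b|^3 \leq C^2\, r^{1-n}\int_{b=r}|\nabla b| = C^2\, I_1(r).
\end{equation*}
By Lemma \ref{l:ref} (and the fact, noted just before it, that $I_1$ is constant in $r$), the right-hand side equals $C^2\,\Vol(\partial B_1(0))$, giving the first inequality.

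For $V$, I would apply the coarea formula to convert the volume integral into an integral over level sets, as is already done in the proof of Lemma \ref{l:coarea}. This gives
\begin{equation*}
V(r) = r^{-n}\int_{b\leq r}|\nabla b|^4 = r^{-n}\int_0^r\int_{b=s}|\nabla b|^3\,ds = r^{-n}\int_0^r s^{n-1}\,A(s)\,ds.
\end{equation*}
Plugging in the bound on $A$ just established yields
\begin{equation*}
V(r)\leq C^2\,\Vol(\partial B_1(0))\, r^{-n}\int_0^r s^{n-1}\,ds = \frac{C^2}{n}\,\Vol(\partial B_1(0)),
\end{equation*}
and the final equality in the stated inequality follows from the standard relation $\Vol(\partial B_1(0))=n\,\Vol(B_1(0))$.

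There is no real obstacle here; the only point to be careful about is that the coarea rewriting of $V$ is valid down to $s=0$, which is legitimate because $|\nabla b|$ is bounded (so the integrand $|\nabla b|^4$ is integrable near $x$) and because $\{b=0\}=\{x\}$ has measure zero. The lemma is essentially a sharpness check for the quantities $A$ and $V$ under the assumed gradient bound, setting up the later argument in which the sharp gradient estimate $|\nabla b|\leq 1$ (Theorem \ref{t:sharpGreen}) will feed back into this lemma to yield the monotonicity-compatible bounds $A\leq \Vol(\partial B_1(0))$ and $V\leq \Vol(B_1(0))$.
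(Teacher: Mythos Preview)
Your proposal is correct and follows essentially the same approach as the paper: bound $A$ by peeling off $|\nabla b|^2\leq C^2$ and using the constancy of $I_1$ from Lemma~\ref{l:ref}, then derive the bound on $V$ from the coarea identity $V(r)=r^{-n}\int_0^r s^{n-1}A(s)\,ds$.
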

    
      \begin{proof}
      The first claim follows from that $I_1$ is constant as a function of $r$ and that we found in Lemma \ref{l:ref} what that constant is.  The second claim follows from the first together with that
      \begin{align}  \label{e:coareaforV}
         V(r)=r^{-n} \int_0^rs^{n-1}\,A(s)\, .
       \end{align}
        \end{proof}
  
\subsection{The first monotonicity formula}

  Our first monotonicity result is the following:
       
       \begin{Thm}  \label{t:mainmono}
       \begin{align}
       \left(A-2(n-1)\, V\right)'= \frac{1}{2}r^{-1-n}\int_{b\leq r}\left(\left|\Hess_{b^2}-\frac{\Delta b^2}{n}\,g\right|^2+\Ric (\nabla b^2,\nabla b^2)\right)\, .
       \end{align}
       \end{Thm}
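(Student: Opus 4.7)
The plan is to differentiate $A - 2(n-1)V$ directly, establish a pointwise Bochner identity, and reconcile the two via two integrations by parts.

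\textbf{Step 1 (derivative side).} Since $A = I_{|\nabla b|^2}$, applying the formula $I_v'(r) = r^{1-n}\int_{b\leq r}\Delta v$ with $v = |\nabla b|^2$ gives $A'(r) = r^{1-n}\int_{b\leq r}\Delta|\nabla b|^2$, while Lemma~\ref{l:coarea} gives $V'(r) = r^{-1}(A - nV)$. Multiplying $(A - 2(n-1)V)'(r)$ by $r^{n+1}$ and using $r^{n-1}A = \int_{b=r}|\nabla b|^3$ together with $r^n V = \int_{b\leq r}|\nabla b|^4$, I get
\begin{align*}
r^{n+1}(A - 2(n-1)V)'(r) &= r^2 \int_{b\leq r}\Delta|\nabla b|^2 - 2(n-1)\,r \int_{b=r}|\nabla b|^3 \\
&\quad + 2n(n-1)\int_{b\leq r}|\nabla b|^4.
\end{align*}

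\textbf{Step 2 (pointwise Bochner identity).} Next I would prove
\begin{align*}
\left|\Hess_{b^2} - \tfrac{\Delta b^2}{n}g\right|^2 + \Ric(\nabla b^2, \nabla b^2) = 2b^2 \Delta|\nabla b|^2 - 4(n-2)\,b\,\langle \nabla b, \nabla|\nabla b|^2\rangle.
\end{align*}
The inputs are: (i) the relation $\Delta b = (n-1)|\nabla b|^2/b$, obtained by comparing $\Delta b^2 = 2b\,\Delta b + 2|\nabla b|^2$ with $\Delta b^2 = 2n|\nabla b|^2$; (ii) Bochner's formula $\tfrac12\Delta|\nabla b|^2 = |\Hess_b|^2 + \langle \nabla b, \nabla\Delta b\rangle + \Ric(\nabla b, \nabla b)$; and (iii) the product rule $\Hess_{b^2} = 2b\,\Hess_b + 2\,\nabla b \otimes \nabla b$, which after squaring and using $\Hess_b(\nabla b, \nabla b) = \tfrac12\langle \nabla b, \nabla|\nabla b|^2\rangle$ gives $|\Hess_{b^2}|^2 - (\Delta b^2)^2/n = 4b^2 |\Hess_b|^2 + 4b\langle \nabla b, \nabla|\nabla b|^2\rangle - 4(n-1)|\nabla b|^4$. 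Multiplying Bochner by $4b^2$ and computing $\langle \nabla b, \nabla\Delta b\rangle$ from (i) cancels the $|\Hess_b|^2$ and $\Ric$ pieces against those in the expansion, leaving exactly the displayed identity.

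\textbf{Step 3 (integration by parts).} Finally, I would integrate the pointwise identity over $\{b \leq r\}$ and apply Green's formula $\int(u\Delta v - v\Delta u) = \int_\partial(u v_n - v u_n)$ with $u = b^2$ and $v = |\nabla b|^2$, using $\Delta b^2 = 2n|\nabla b|^2$ and $(b^2)_n = 2r|\nabla b|$ on $\{b = r\}$, to obtain
\begin{align*}
\int_{b\leq r} b^2 \Delta|\nabla b|^2 = 2n\int_{b\leq r}|\nabla b|^4 + r^2 \int_{b\leq r}\Delta|\nabla b|^2 - 2r \int_{b=r}|\nabla b|^3.
\end{align*}
The same machinery applied to $b\langle \nabla b, \nabla|\nabla b|^2\rangle = \tfrac12\langle \nabla b^2, \nabla|\nabla b|^2\rangle$ gives $\int_{b\leq r} b\langle \nabla b, \nabla|\nabla b|^2\rangle = r\int_{b=r}|\nabla b|^3 - n\int_{b\leq r}|\nabla b|^4$. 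Plugging both into the integrated pointwise identity produces exactly twice the expression found in Step~1; dividing by $2r^{n+1}$ yields the theorem.

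\textbf{Expected difficulty.} The main obstacle is bookkeeping: roughly half a dozen terms with coefficients linear in $n$ have to cancel in a single stroke for the $|\nabla b|^4$ and cross-term coefficients to line up. A subsidiary technical point is the singularity of $b$ at $x$; strictly the integrations by parts should be carried out on $\{\epsilon \leq b \leq r\}$, with the $\{b = \epsilon\}$ boundary contributions shown to vanish as $\epsilon \to 0$ using the asymptotics $b/\dist(\cdot,x) \to 1$ and $|\nabla b| \to 1$ from Lemma~\ref{l:ref}.
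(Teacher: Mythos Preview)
Your proof is correct, but it is organized differently from the paper's. The paper applies Bochner's formula directly to $b^2$ (computing $\Delta|\nabla b^2|^2$), integrates over $\{b\leq r\}$, does a single integration by parts on the cross term $\langle\nabla\Delta b^2,\nabla b^2\rangle$, and packages the computation as a formula for $r^{-2}(r^2A)'$ before invoking Lemma~\ref{l:coarea}. You instead start from $(A-2(n-1)V)'$ directly, apply Bochner to $b$ rather than $b^2$, and perform two integrations by parts. Your Step~2 pointwise identity is precisely equation~\eqr{e:bochner3} of Lemma~\ref{e:LaplE} (written as $2b^2\Delta|\nabla b|^2 - 4(n-2)b\langle\nabla b,\nabla|\nabla b|^2\rangle$ after multiplying by $2$ and using $\nabla b^2=2b\nabla b$), which the paper proves separately---via Bochner on $b^2$---and uses only for the \emph{second} monotonicity formula, Theorem~\ref{t:secondmono1}. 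So what your route buys is a unified derivation: both Theorems~\ref{t:mainmono} and~\ref{t:secondmono1} now flow from the same pointwise identity, integrated against different weights. The paper's route is a touch shorter for this particular theorem (one integration by parts instead of two) but keeps the two monotonicity proofs more separate. Your remark about the singularity at $x$ is well taken; the paper's proof implicitly uses the same $I_v'$ formula, which already encodes the needed $\epsilon\to 0$ limit.
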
    
     
     \begin{proof}
     Observe first that we can trivially rewrite $A(r)$ as follows
      \begin{align}
  A(r)&=r^{1-n}\int_{b=r}|\nabla b|^3=\frac{1}{4}r^{-1-n}\int_{b=r}|\nabla b^2|^2\, |\nabla b|\, . 
  \end{align}

     Computing gives
        \begin{align}
  r^{-2}\,\left(r^2\,A\right)'(r)&
  =\frac{1}{4}r^{-1-n}\int_{b= r}\frac{d}{dn} |\nabla b^2|^2=\frac{1}{4}r^{-1-n}\int_{b\leq r}\Delta |\nabla b^2|^2\notag\\
  &=\frac{1}{2}r^{-1-n}\int_{b\leq r}\left(|\Hess_{b^2}|^2+\langle \nabla \Delta b^2,\nabla b^2\rangle+\Ric (\nabla b^2,\nabla b^2)\right)\notag\\
  &=\frac{1}{2}r^{-1-n}\int_{b\leq r}\left(|\Hess_{b^2}|^2-|\Delta b^2|^2+\Ric (\nabla b^2,\nabla b^2)\right)+\frac{1}{2}r^{-1-n}\int_{b=r}\Delta b^2\,  \frac{d}{dn}b^2\\
 &=\frac{1}{2}r^{-1-n}\int_{b\leq r}\left(|\Hess_{b^2}|^2-|\Delta b^2|^2+\Ric (\nabla b^2,\nabla b^2)\right)+2n\, r^{-n}\int_{b=r}|\nabla b|^3\notag\\
 &=\frac{1}{2}r^{-1-n}\int_{b\leq r}\left(|\Hess_{b^2}|^2-|\Delta b^2|^2+\Ric (\nabla b^2,\nabla b^2)\right)+\frac{2n}{r} A(r)\notag \, .
     \end{align}
     Moreover, 
     \begin{align}
  \left|\Hess_{b^2}-\frac{\Delta b^2}{n}\,g\right|^2&= \left|\Hess_{b^2}\right|^2+\frac{|\Delta b^2|^2}{n}-2\frac{|\Delta b^2|^2}{n}\\
  &=\left|\Hess_{b^2}\right|^2-\frac{|\Delta b^2|^2}{n}\, .\notag
     \end{align}
     Hence,
     \begin{align}
     \left|\Hess_{b^2}\right|^2-|\Delta b^2|^2&= \left|\Hess_{b^2}-\frac{\Delta b^2}{n}\,g\right|^2-\left(1-\frac{1}{n}\right)\,|\Delta b^2|^2\\
     &=\left|\Hess_{b^2}-\frac{\Delta b^2}{n}\,g\right|^2-4n^2\,\left(1-\frac{1}{n}\right)\,|\nabla b|^4\, .\notag
   \end{align}
 Inserting this in the above gives
  \begin{align}
   r^{-2}\,\left(r^2\,A\right)'(r)&=\frac{1}{2}r^{-1-n}\int_{b\leq r}\left(\left|\Hess_{b^2}-\frac{\Delta b^2}{n}\,g\right|^2+\Ric (\nabla b^2,\nabla b^2)\right)\\
  &-2\left(1-\frac{1}{n}\right)\,n^2r^{-1-n}\int_{b\leq r}|\nabla b|^4+\frac{2n}{r} A(r)\notag \, .
     \end{align}
       Using lemma \ref{l:coarea} we can now rewrite the above as 
 \begin{align}
  r^{-2}\,\left(r^2\,A\right)'(r)&=\frac{1}{2}r^{-1-n}\int_{b\leq r}\left(\left|\Hess_{b^2}-\frac{\Delta b^2}{n}\,g\right|^2+\Ric (\nabla b^2,\nabla b^2)\right)
  -\frac{2\left(1-\frac{1}{n}\right)\,n^2}{r}V(r)+\frac{2n}{r} A(r)\notag\\
  &=\frac{1}{2}r^{-1-n}\int_{b\leq r}\left(\left|\Hess_{b^2}-\frac{\Delta b^2}{n}\,g\right|^2+\Ric (\nabla b^2,\nabla b^2)\right)+\frac{2n}{r} \left(A(r)-n\,V(r)\right)+\frac{2n}{r} V(r) \, .
       \end{align}
       Or, equivalently, since $r^{-2}\,\left(r^2\,A\right)'=A'+\frac{2}{r}A$
  \begin{align}
  A'=\frac{1}{2}r^{-1-n}\int_{b\leq r}\left(\left|\Hess_{b^2}-\frac{\Delta b^2}{n}\,g\right|^2+\Ric (\nabla b^2,\nabla b^2)\right)+\frac{2(n-1)}{r} \left(A-n\,V\right)\, .
       \end{align}     
       Therefore
       \begin{align}
       \left(A-2(n-1)\, V\right)'= \frac{1}{2}r^{-1-n}\int_{b\leq r}\left(\left|\Hess_{b^2}-\frac{\Delta b^2}{n}\,g\right|^2+\Ric (\nabla b^2,\nabla b^2)\right)\, .
       \end{align}
       \end{proof}

       In particular, on a manifold with nonnegative Ricci curvature we get the following:
       
       \begin{Cor}
       If $M$ is an $n$-dimensional manifold with nonnegative Ricci curvature, then for all $r>0$
       \begin{align}
       A(r)-\Vol (\partial B_1(0))\geq 2(n-1)\, \left(V(r)-\Vol (B_1(0))\right)\, .
       \end{align}
       
       Moreover, if for some $r>0$ we have equality, then the set $\{b\leq r\}$ is isometric to a ball of radius $r$ in $\RR^n$. 
       \end{Cor}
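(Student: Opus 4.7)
The inequality is a direct consequence of Theorem \ref{t:mainmono} combined with Lemma \ref{l:ref}. Since $\Ric \geq 0$, both terms in the integrand on the right hand side of the derivative formula in Theorem \ref{t:mainmono} are pointwise nonnegative, so $(A - 2(n-1) V)'(r) \geq 0$ and the function $r \mapsto A(r) - 2(n-1)V(r)$ is monotone nondecreasing on $(0, \infty)$. Passing to the limit $r \to 0$ using $A(r) \to \Vol(\partial B_1(0))$ and $V(r) \to \Vol(B_1(0))$ from Lemma \ref{l:ref} yields
\[
A(r) - 2(n-1) V(r) \;\geq\; \Vol(\partial B_1(0)) - 2(n-1)\Vol(B_1(0)),
\]
which is the claim after rearrangement.

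For the rigidity assertion, suppose equality is attained at some $r_0 > 0$. Then $A - 2(n-1) V$ is constant on $(0, r_0]$, so from Theorem \ref{t:mainmono} the (pointwise nonnegative) integrand $\left|\Hess_{b^2} - \tfrac{\Delta b^2}{n}\, g\right|^2 + \Ric(\nabla b^2, \nabla b^2)$ must vanish identically on $\{b \leq r_0\}$. This gives
\[
\Hess_{b^2} \;=\; \frac{\Delta b^2}{n}\, g \;=\; 2|\nabla b|^2\, g \qquad \text{and} \qquad \Ric(\nabla b^2, \nabla b^2) = 0,
\]
and the second condition together with $\Ric \geq 0$ upgrades, via the fact that $\Ric$ is symmetric positive semidefinite, to $\Ric(\cdot, \nabla b^2) \equiv 0$ as a one-form on $\{b \leq r_0\}$.

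The final step promotes these conditions to Euclidean rigidity. Writing $f = b^2$ and $\phi = 2|\nabla b|^2$, the identity $\nabla_X \nabla f = \phi\, X$ gives $\nabla^2_{Y, X} \nabla f = Y(\phi)\, X$, and the Ricci commutation identity then yields $Y(\phi)\, X - X(\phi)\, Y = R(Y, X) \nabla f$; tracing over $X$ produces $(n-1)\, Y(\phi) = \Ric(Y, \nabla f) = 0$, so $\phi$ is constant on $\{b \leq r_0\}$. Combined with $|\nabla b|^2 \to 1$ at $x$ from Lemma \ref{l:ref}, this forces $|\nabla b| \equiv 1$ and $\Hess_{b^2} \equiv 2 g$. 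Since $|\nabla b| = 1$ and $b(x) = 0$, the function $b$ coincides with the Riemannian distance to $x$ on $\{b \leq r_0\}$, and in geodesic polar coordinates around $x$ the condition $\Hess_{b^2} = 2g$ evaluated on level-sphere tangent vectors forces the shape operator of $\{b = s\}$ to equal $\frac{1}{s}\,\mathrm{Id}$, pinning down the metric as $db^2 + b^2\, g_{S^{n-1}}$. Therefore $\{b \leq r_0\}$ is isometric to $B_{r_0}(0) \subset \RR^n$. I expect the rigidity analysis — specifically the step from the vanishing of the trace-free Hessian and the Ricci term to $|\nabla b|^2 \equiv 1$ — to be the only subtle point; the inequality itself is immediate from monotonicity.
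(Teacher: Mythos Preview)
Your proof is correct, and for the inequality it matches the paper exactly: monotonicity from Theorem~\ref{t:mainmono} plus the initial values from Lemma~\ref{l:ref}.

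For the rigidity, you take a different and more self-contained route than the paper. The paper simply observes that equality forces
\[
\Hess_{b^2}=\frac{\Delta b^2}{n}\,g,\qquad \Ric(\nabla b^2,\nabla b^2)=0,
\]
and then invokes section~1 of \cite{ChC1} (the cone/warped-product rigidity theorem) to conclude that $\{b\leq r\}$ is a metric cone with $b$ the distance to the vertex; smoothness then forces the cone to be $\RR^n$. You instead argue directly: from $\Ric\geq 0$ and $\Ric(\nabla b^2,\nabla b^2)=0$ you get $\Ric(\cdot,\nabla b^2)=0$, and then the commutation identity applied to $\Hess_{b^2}=\phi\,g$ yields $(n-1)\,\nabla\phi=\pm\Ric(\cdot,\nabla b^2)=0$ (your trace carries a sign slip, but it is irrelevant since the right-hand side vanishes). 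Hence $|\nabla b|^2$ is constant on the connected set $\{0<b\leq r_0\}$, and the asymptotics at $x$ pin it to $1$; the rest is the standard passage from $\Hess_{r^2}=2g$ with $r$ the distance function to the flat metric, using smoothness at the vertex to identify the cross-section with the round sphere. What you gain is that the argument is elementary and does not leave the paper; what the paper's citation of \cite{ChC1} buys is brevity and a statement that applies even without knowing $\Ric(\cdot,\nabla b^2)=0$ in full (only the trace-free Hessian condition is needed there to produce the cone structure).
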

       
       \begin{proof}
       The inequality follows trivially from the theorem.   Suppose therefore that for some $r>0$ we have equality. 
       Since $M$ has nonnegative Ricci curvature, then by Theorem \ref{t:mainmono}
       \begin{align}
       \Hess_{b^2}&=\frac{\Delta b^2}{n}\,g\, ,\\
       \Ric (\nabla b^2,\nabla b^2)&=0\, .
        \end{align}
        From this it now follows from section 1 of \cite{ChC1} that $\{b\leq r\}$ is a metric cone and that $b$ is the distance to the vertex.  Since Euclidean space is the only smooth cone the corollary follows.
       \end{proof}
       
       Note that the inequality in the above corollary goes in the opposite direction of the usual Bishop-Gromov volume comparison theorem for manifolds with nonnegative Ricci curvature where the scale invariant volume of the boundary of a ball is bounded by the inside.  This is closely connected with that the above inequality deals with the excess relative to the Euclidean quantities.
       
       Likewise we get:
       
       \begin{Cor}
       If $M$ is an $n$-dimensional manifold with nonnegative Ricci curvature and $r_2>r_1>0$, then 
       \begin{align}
       A(r_2)-2(n-1)V(r_2)\geq A(r_1)-2(n-1)V(r_1)\, 
       \end{align}
       and equality holds if and only if the set $\{b\leq r_2\}$ is isometric to a ball of radius $r_2$ in Euclidean space. 
       \end{Cor}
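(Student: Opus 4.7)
The proof is a straightforward integration of the derivative identity in Theorem \ref{t:mainmono}, followed by a rigidity argument exactly parallel to the one used in the previous corollary. My plan is as follows.

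First, I would observe that under the assumption $\Ric \geq 0$, both terms in the integrand of Theorem \ref{t:mainmono} are pointwise nonnegative: the traceless Hessian squared $|\Hess_{b^2}-\frac{\Delta b^2}{n}g|^2$ is manifestly so, and $\Ric(\nabla b^2,\nabla b^2)\geq 0$ by hypothesis. Thus $(A-2(n-1)V)'(r)\geq 0$ for every $r>0$, and integrating this from $r_1$ to $r_2$ immediately yields the desired inequality
\begin{equation}
A(r_2)-2(n-1)V(r_2)\geq A(r_1)-2(n-1)V(r_1).
\end{equation}

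Next, for the equality case, the ``if'' direction is immediate: on $\RR^n$ we have $b(y)=d(x,y)$ with $|\nabla b|\equiv 1$, so $A$ and $V$ are constant in $r$, hence so is $A-2(n-1)V$. For the ``only if'' direction, suppose equality holds for some $r_2>r_1>0$. Since $A-2(n-1)V$ is nondecreasing, equality at the endpoints forces it to be constant on the whole interval $[r_1,r_2]$, so $(A-2(n-1)V)'(r)=0$ for every $r\in[r_1,r_2]$. By Theorem \ref{t:mainmono} this gives
\begin{equation}
\int_{b\leq r}\left(\left|\Hess_{b^2}-\frac{\Delta b^2}{n}g\right|^2+\Ric (\nabla b^2,\nabla b^2)\right)=0
\end{equation}
for every such $r$. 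Choosing $r=r_2$ and using that the integrand is pointwise nonnegative forces
\begin{equation}
\Hess_{b^2}=\frac{\Delta b^2}{n}g\quad\text{and}\quad \Ric(\nabla b^2,\nabla b^2)=0
\end{equation}
identically on $\{b\leq r_2\}$.

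Finally, just as in the proof of the previous corollary, I would invoke Section 1 of \cite{ChC1}: the identity $\Hess_{b^2}=\frac{\Delta b^2}{n}g$ means that $b^2$ is a radial function in the sense of Cheeger-Colding, which forces $\{b\leq r_2\}$ to be a metric cone with $b$ equal to the distance from the vertex. Because this cone sits smoothly inside the manifold $M$, the link at the vertex must be the round sphere $\SS^{n-1}$ of radius $1$, so $\{b\leq r_2\}$ is isometric to the Euclidean ball of radius $r_2$, completing the proof. The only nontrivial step here is the appeal to the cone rigidity in \cite{ChC1}, but since this has already been used in the preceding corollary the argument is essentially a direct adaptation.
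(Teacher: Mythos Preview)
Your proof is correct and follows exactly the approach the paper intends: the paper does not give a separate proof for this corollary but simply writes ``Likewise we get:'', deferring to the argument for the preceding corollary, which is precisely what you have written out --- integrate the nonnegative derivative from Theorem \ref{t:mainmono} to get the inequality, and in the equality case extract $\Hess_{b^2}=\frac{\Delta b^2}{n}g$ on $\{b\leq r_2\}$ and invoke Section 1 of \cite{ChC1}. The only very minor sloppiness is that constancy on $[r_1,r_2]$ gives vanishing of the derivative on the open interval $(r_1,r_2)$, so one should take $r\nearrow r_2$ (using monotonicity of $r\mapsto\int_{b\leq r}$) rather than directly set $r=r_2$; this is cosmetic.
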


       Note also that all of the above computations works for any positive harmonic function $G$ with $\frac{1}{G}$ proper and not necessarily the Green's function.
              
       \subsection{The second monotonicity formula}
       
              The next lemma holds for any positive harmonic function $G$, where, as before, $b$ is given by that $b^{2-n}=G$.
       
       \begin{Lem}   \label{e:LaplE}
       \begin{align}  \label{e:bochner3}
       b^2\, \Delta |\nabla b|^2+(2-n)\,\langle \nabla b^2,\nabla |\nabla b|^2\rangle
       &= \frac{1}{2}\left(\left|\Hess_{b^2}-\frac{\Delta b^2}{n}g\right|^2+\Ric (\nabla b^2,\nabla b^2)\right)\, ,\\
       \Delta \left(|\nabla b|^2\, G\right)&=\frac{1}{2}\left(\left|\Hess_{b^2}-\frac{\Delta b^2}{n}g\right|^2+\Ric (\nabla b^2,\nabla b^2)\right)\,b^{-n}\, .\label{e:bsub}
       \end{align}
       \end{Lem}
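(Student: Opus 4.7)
The plan is to derive both identities from a single application of the Bochner formula to $f=b^2$, combined with the standard Leibniz rule $\Delta(uv)=u\Delta v+v\Delta u+2\langle\nabla u,\nabla v\rangle$ and the already-established identity $\Delta b^2 = 2n|\nabla b|^2$. No geometric input beyond Bochner is needed; the rest is algebraic manipulation.

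For \eqr{e:bochner3}, I would start from the Bochner identity
\begin{equation*}
\tfrac{1}{2}\Delta|\nabla b^2|^2 = |\Hess_{b^2}|^2 + \langle\nabla b^2, \nabla\Delta b^2\rangle + \Ric(\nabla b^2, \nabla b^2).
\end{equation*}
On the left I would expand $|\nabla b^2|^2 = 4b^2|\nabla b|^2$ and apply the Leibniz rule, which, after using $\Delta b^2 = 2n|\nabla b|^2$, produces $2b^2\Delta|\nabla b|^2 + 4\langle\nabla b^2,\nabla|\nabla b|^2\rangle + 4n|\nabla b|^4$. On the right I would rewrite $|\Hess_{b^2}|^2 = |\Hess_{b^2}-\tfrac{\Delta b^2}{n}g|^2 + \tfrac{|\Delta b^2|^2}{n}$ (the standard trace-free decomposition of a symmetric $2$-tensor), again use $\Delta b^2 = 2n|\nabla b|^2$ to see that $\tfrac{|\Delta b^2|^2}{n} = 4n|\nabla b|^4$, and differentiate once more to get $\nabla\Delta b^2 = 2n\nabla|\nabla b|^2$, so that $\langle\nabla b^2,\nabla\Delta b^2\rangle = 2n\langle\nabla b^2,\nabla|\nabla b|^2\rangle$. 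The $4n|\nabla b|^4$ terms on each side cancel, and dividing by $2$ gives \eqr{e:bochner3} with exactly the claimed coefficient $(2-n)$ on the inner-product term.

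For \eqr{e:bsub}, I would apply the Leibniz rule once more, this time with $u=|\nabla b|^2$ and $v=G=b^{2-n}$. Since $G$ is harmonic, the $u\Delta v$ term drops out. Writing $\nabla G = (2-n)b^{1-n}\nabla b = \tfrac{2-n}{2}b^{-n}\nabla b^2$ converts the cross term $2\langle\nabla u,\nabla v\rangle$ into $(2-n)b^{-n}\langle\nabla|\nabla b|^2,\nabla b^2\rangle$, and combining this with $G\Delta|\nabla b|^2 = b^{2-n}\Delta|\nabla b|^2 = b^{-n}\cdot b^2\,\Delta|\nabla b|^2$ gives precisely $b^{-n}$ times the left-hand side of \eqr{e:bochner3}. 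Substituting \eqr{e:bochner3} immediately yields \eqr{e:bsub}.

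The main obstacle is purely bookkeeping: tracking the coefficient $(2-n)$ and checking that the two separate appearances of the $|\nabla b|^4$ term on the left and right of Bochner exactly cancel. Once one commits to applying Bochner to $b^2$, rather than to $b$ or to $G$ directly, everything else is routine. The payoff of working with $b^2$ is that $\Delta b^2$ is a clean multiple of $|\nabla b|^2$, which is what drives the cancellation.
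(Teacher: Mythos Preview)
Your proposal is correct and follows essentially the same approach as the paper: Bochner applied to $b^2$, the trace-free decomposition of $\Hess_{b^2}$, expansion of $|\nabla b^2|^2 = 4b^2|\nabla b|^2$ via Leibniz, cancellation of the $4n|\nabla b|^4$ terms, and then a second application of Leibniz with the harmonicity of $G$ to deduce \eqr{e:bsub} from \eqr{e:bochner3}. The bookkeeping you flag is exactly what the paper carries out.
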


       \begin{proof}
       By the Bochner formula, as in the proof of Theorem \ref{t:mainmono},
        \begin{align}  \label{e:bochner1}
       \frac{1}{2}\Delta |\nabla b^2|^2&=|\Hess_{b^2}|^2+\langle \nabla \Delta b^2,\nabla b^2\rangle +\Ric (\nabla b^2,\nabla b^2)\notag\\
       &=\left|\Hess_{b^2}-\frac{\Delta b^2}{n}g\right|^2+\frac{|\Delta b^2|^2}{n}+\langle \nabla \Delta b^2,\nabla b^2\rangle +\Ric (\nabla b^2,\nabla b^2)\\
       &=\left|\Hess_{b^2}-\frac{\Delta b^2}{n}g\right|^2+4n\,|\nabla b|^4+2n\,\langle \nabla |\nabla b|^2,\nabla b^2\rangle +\Ric (\nabla b^2,\nabla b^2)\, .\notag
       \end{align}
       Moreover, since
        \begin{align}
       |\nabla b^2|^2=4\, b^2\, |\nabla b|^2\, .
       \end{align}
       we have that
       \begin{align}  \label{e:bochner2}
       \Delta |\nabla b^2|^2&=4\, b^2\, \Delta |\nabla b|^2+4\, \Delta b^2\, |\nabla b|^2+8\,\langle \nabla b^2,\nabla |\nabla b|^2\rangle\\
       &=4\, b^2\, \Delta |\nabla b|^2+8n\, |\nabla b|^4+8\,\langle \nabla b^2,\nabla |\nabla b|^2\rangle\notag\, .
       \end{align}
       Combining \eqr{e:bochner1} with \eqr{e:bochner2} gives \eqr{e:bochner3}.
       
       To see the second claim use Leibniz' rule and \eqr{e:bochner3} to get
        \begin{align}  
       2 \, \Delta \left(|\nabla b|^2\, G\right)&=2 \, b^2\, G\,\Delta |\nabla b|^2+(4-2n)\,\langle \nabla b^2,\nabla |\nabla b|^2\rangle\,  b^{1-n}\notag\\
       &=b^{-n}\,\left(2b^2\,\Delta\, |\nabla b|^2+(4-2n)\,\langle \nabla b^2,\nabla |\nabla b|^2\rangle\right)\\
       &=\left(\left|\Hess_{b^2}-\frac{\Delta b^2}{n}g\right|^2+\Ric (\nabla b^2,\nabla b^2)\right)\,b^{-n}\, .\notag
       \end{align}
    \end{proof}

       Lemma \ref{e:LaplE} also lead us directly to our second monotonicity formula:
       
          \begin{Thm}  \label{t:secondmono1}
       \begin{align}   \label{e:secondmono1e1}
       (2-n)\,(A-\Vol (\partial B_1(0)))+r\,A'=\frac{1}{2}\int_{b\leq r}\left(\left|\Hess_{b^2}-\frac{\Delta b^2}{n}g\right|^2+\Ric (\nabla b^2,\nabla b^2)\right)\,b^{-n}\, .
        \end{align}
        Or, equivalently, 
       \begin{align}    \label{e:secondmono1e2}
       \left(r^{2-n} \, \left[A-\Vol (\partial B_1(0))\right]\right)'=\frac{r^{1-n}}{2}\int_{b\leq r}\left(\left|\Hess_{b^2}-\frac{\Delta b^2}{n}\,g\right|^2+\Ric (\nabla b^2,\nabla b^2)\right)
       \,b^{-n}\, .
       \end{align} 
        \end{Thm}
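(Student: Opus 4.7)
The plan is to apply the divergence-form identity \eqr{e:bsub} from Lemma \ref{e:LaplE} to the function $v = |\nabla b|^2\, G$ and integrate over $\{b\leq r\}$, tracking the boundary contribution at the pole $x$. The starting observation is that on the level set $\{b=r\}$ we have $G \equiv r^{2-n}$, so that with $I_v$ as defined in the preamble of Section \ref{s:monotone},
\[
I_v(r) = r^{1-n}\int_{b=r} |\nabla b|^2\, b^{2-n}\, |\nabla b| = r^{3-2n}\int_{b=r}|\nabla b|^3 = r^{2-n} A(r).
\]
Thus $I'_v(r) = (r^{2-n} A(r))'$, and the entire task is to evaluate $I'_v(r) = r^{1-n}\int_{b=r} v_n$ using \eqr{e:bsub}.

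The natural step would be the identity $r^{1-n}\int_{b=r} v_n = r^{1-n}\int_{b\leq r} \Delta v$, but $v$ is singular at $x$, so I would apply the divergence theorem on the annulus $\{\epsilon \leq b \leq r\}$ and pass to the limit $\epsilon \to 0$:
\[
\int_{b=r} v_n \;-\; \int_{b=\epsilon} v_n \;=\; \int_{\epsilon \leq b\leq r} \Delta v.
\]
Computing directly from $v = |\nabla b|^2\, b^{2-n}$ gives
\[
v_n = \frac{\langle \nabla |\nabla b|^2, \nabla b\rangle}{|\nabla b|}\, b^{2-n} \;+\; (2-n)\, |\nabla b|^3\, b^{1-n}.
\]
On $\{b=\epsilon\}$ the second term integrates to $(2-n)\, A(\epsilon)$, while the first term is $O(\epsilon)$ (by boundedness of $\nabla|\nabla b|^2$ near $x$ combined with surface area $\sim \epsilon^{n-1}$ against the factor $\epsilon^{2-n}$). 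By Lemma \ref{l:ref}, $A(\epsilon) \to \Vol(\partial B_1(0))$, so the boundary at $b=\epsilon$ contributes exactly $(2-n)\,\Vol(\partial B_1(0))$ in the limit. Separately, the integrand in \eqr{e:bsub} vanishes to sufficient order at $x$ that $\Delta v$ is integrable near $x$, so the annular integral converges to $\int_{b\leq r}\Delta v$.

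Combining these pieces and substituting \eqr{e:bsub} yields
\[
(r^{2-n} A)'(r) \;=\; r^{1-n}\int_{b\leq r} \Delta v \;+\; (2-n)\, r^{1-n}\, \Vol(\partial B_1(0)).
\]
Since $(2-n)\,r^{1-n} = (r^{2-n})'$, the constant can be absorbed into the derivative on the left to produce $(r^{2-n}[A - \Vol(\partial B_1(0))])'$, giving \eqr{e:secondmono1e2}. The equivalent form \eqr{e:secondmono1e1} then follows by expanding $(r^{2-n}[A-\Vol(\partial B_1(0))])'$ with Leibniz and multiplying by $r^{n-1}$.

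The main obstacle is the singularity of $v$ at the pole: the entire subtraction $-\Vol(\partial B_1(0))$ appearing in the theorem is produced by the inner-boundary term in the annular divergence theorem. Identifying the exact constant requires both the sharp asymptotics in Lemma \ref{l:ref} and verifying that the crossed term involving $\nabla|\nabla b|^2$ decays fast enough at $x$ to drop out of the limit.
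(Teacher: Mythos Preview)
Your approach is essentially the paper's: set $v=|\nabla b|^2\,G$, note $I_v(r)=r^{2-n}A(r)$, apply the divergence theorem with \eqr{e:bsub} on an annulus, and send the inner radius to zero. The difference is in how the inner boundary term is dispatched.

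You split $v_n$ into two pieces and claim the first is $O(\epsilon)$ by ``boundedness of $\nabla|\nabla b|^2$ near $x$''. But that first piece integrates to exactly $\epsilon\,A'(\epsilon)$, so you are asserting that $A'$ stays bounded at the pole. That is a second-order statement about $b$; Lemma \ref{l:ref} (via Gilbarg--Serrin) only supplies $|\nabla b|^2\to 1$, not control on its gradient. The claim is true on a smooth manifold from finer parametrix asymptotics for $G$, but it is not established anywhere in the paper and would need separate justification.

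The paper sidesteps this entirely. Rather than bounding $A'$, it observes that $A(r)\to\Vol(\partial B_1(0))$ forces $A$ to be bounded near $0$, whence there exists a \emph{sequence} $r_i\to 0$ along which $r_iA'(r_i)\to 0$ (otherwise $|A'(r)|\geq c/r$ for small $r$ and $A$ would diverge logarithmically). Taking the limit in the annular identity along this sequence yields \eqr{e:secondmono1e1} directly, using only the zeroth-order information from Lemma \ref{l:ref}. This soft sequence trick is the one idea your argument is missing; with it, your hypothesis on $\nabla|\nabla b|^2$ becomes unnecessary.
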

       
          \begin{proof}
          For $r_2>r_1>0$ by Stokes' theorem and Lemma \ref{e:LaplE}
        \begin{align}
        r_2^{n-1}\, \left(r^{2-n}\,A\right)'(r_2&)-r_1^{n-1}\, \left(r^{2-n}\,A\right)'(r_1)=r_2^{n-1}\, I_{|\nabla b|^2\,G}'(r_2)-r_1^{n-1}\, I_{|\nabla b|^2\,G}'(r_1)\notag\\
       &=\int_{b=r_2}\left(|\nabla b|^2\, G\right)_n- \int_{b=r_1}\left(|\nabla b|^2\, G\right)_n\\
       &=\frac{1}{2}\int_{r_1\leq b\leq r_2}\left(\left|\Hess_{b^2}-\frac{\Delta b^2}{n}g\right|^2+\Ric (\nabla b^2,\nabla b^2)\right)\,b^{-n}\, .\notag
       \end{align}
       Since
       \begin{align}
       r^{n-1}\, \left(r^{2-n}\,A\right)'=(2-n)\,A+r\,A'\, ,
        \end{align}
        and, as we will see shortly, there exists a sequence $r_i\to 0$ 
         \begin{align}  \label{e:ref}
       (2-n)\,A(r_i)+r_i\,A'(r_i)\to (2-n)\,\Vol (\partial B_1(0))\text{ as }r_i\to 0\, ,
        \end{align}
        we get that
        \begin{align}  \label{e:second2}
       (2-n)\,(A-\Vol (\partial B_1(0)))+r\,A'=\frac{1}{2}\int_{b\leq r}\left(\left|\Hess_{b^2}-\frac{\Delta b^2}{n}g\right|^2+\Ric (\nabla b^2,\nabla b^2)\right)\,b^{-n}\, .
        \end{align}
        To see \eqr{e:ref} we need Lemma \ref{l:ref}.  Namely, by \eqr{e:ref3} $A(r)\to \Vol (\partial B_1(0))$ as $r\to 0$.  Moreover, it follows from this that $A$ is uniformly bounded for $r$ sufficiently small and hence there exists a sequence $r_i\to 0$ so that $r_i\,A'(r_i)\to 0$.  
          \end{proof}

       We can also reformulate this second monotonicity theorem by defining a second `volume of balls'.  We do that by setting
       \begin{align} \label{e:definfty1}
       V_{\infty}=\int_{1\leq b\leq r} \left(|\nabla b|^2-1\right)\,|\nabla b|^2\,b^{-n}\, .
       \end{align}
       So that by the coarea formula
       \begin{align} \label{e:definfty2}
       V_{\infty}=\int_1^rs^{-n}\int_{b=s}\left(|\nabla b|^3-|\nabla b|\right)\, ,
       \end{align}
       and hence
       \begin{align}  \label{e:diff}
              V_{\infty}'=r^{-n}\int_{b=r}\left(|\nabla b|^3-|\nabla b|\right)=\frac{A-\Vol (\partial B_1(0))}{r}\, .
       \end{align}
       Note that when $r<1$ the integral \eqr{e:definfty1} is interpreted as \eqr{e:definfty2}.
       It is not clear that this new $V_{\infty}$ is bounded even for manifolds with nonnegative Ricci curvature and indeed we will show that in general it is not.
       
       We can now reformulate our second monotonicity theorem in terms of this second `volume of balls' as follows:
       
       \begin{Thm}  \label{t:secondmono2}
       \begin{align}
       \left(A-(n-2)\,V_{\infty}\right)'=\frac{1}{2r}\int_{b\leq r}\left(\left|\Hess_{b^2}-\frac{\Delta b^2}{n}g\right|^2+\Ric (\nabla b^2,\nabla b^2)\right)\,b^{-n}\, .
        \end{align}
       \end{Thm}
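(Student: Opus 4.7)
The plan is to derive Theorem \ref{t:secondmono2} as a direct algebraic reformulation of Theorem \ref{t:secondmono1}, using the definition of $V_\infty$ together with the identity \eqref{e:diff} for its derivative. Since the analytic content, namely the Bochner-type identity in Lemma \ref{e:LaplE} and the application of Stokes' theorem, has already been absorbed into Theorem \ref{t:secondmono1}, the remaining task here is purely to repackage the left-hand side.

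First I would record that from \eqref{e:diff},
\begin{align}
(n-2)\, V_\infty'(r) = \frac{(n-2)\bigl(A(r)-\Vol(\partial B_1(0))\bigr)}{r}.
\end{align}
Next, I would differentiate $A-(n-2)\,V_\infty$ directly and multiply through by $r$ to obtain
\begin{align}
r\,\bigl(A-(n-2)\,V_\infty\bigr)'(r) = r\,A'(r) - (n-2)\bigl(A(r)-\Vol(\partial B_1(0))\bigr).
\end{align}
The right-hand side is exactly $(2-n)\bigl(A-\Vol(\partial B_1(0))\bigr) + r\,A'$, which is the left-hand side of \eqref{e:secondmono1e1} in Theorem \ref{t:secondmono1}.

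The final step is to substitute in Theorem \ref{t:secondmono1} and divide by $r$, yielding
\begin{align}
\bigl(A-(n-2)\,V_\infty\bigr)'(r) = \frac{1}{2r}\int_{b\leq r}\left(\left|\Hess_{b^2}-\frac{\Delta b^2}{n}g\right|^2+\Ric(\nabla b^2,\nabla b^2)\right) b^{-n},
\end{align}
which is the desired formula.

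I do not anticipate a genuine obstacle: all of the hard work, including the derivation of the Bochner identity \eqref{e:bsub}, the use of Stokes' theorem on $\{r_1\leq b\leq r_2\}$, and the delicate verification via Lemma \ref{l:ref} that the small-$r$ boundary terms yield the constant $(2-n)\,\Vol(\partial B_1(0))$, has already been carried out in proving Theorem \ref{t:secondmono1}. The only minor point to check is that the definition of $V_\infty$ in \eqref{e:definfty1}--\eqref{e:definfty2}, whose integral is interpreted via the coarea expression when $r<1$, is indeed differentiable in $r$ with derivative given by \eqref{e:diff}; this is immediate from the fundamental theorem of calculus applied to the single-variable formula \eqref{e:definfty2}. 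Thus the proof reduces to the two-line algebraic identity above.
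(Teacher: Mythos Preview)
Your proposal is correct and is precisely the argument the paper intends: the paper's proof is the single line ``This follows from \eqr{e:secondmono1e1},'' and you have simply unpacked that by combining \eqr{e:secondmono1e1} with the derivative formula \eqr{e:diff} for $V_\infty$ and dividing by $r$. There is nothing to add.
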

       
          \begin{proof}
         This follows from \eqr{e:secondmono1e1}.
          \end{proof}

          Similar to the situation after the first monotonicity formula we get the following immediate corollary from this second monotonicity for manifolds with nonnegative Ricci curvature (the proof is with obvious changes the same as in the earlier corollaries of the first monotonicity formula).

       \begin{Cor}
       If $M$ is an $n$-dimensional manifold with nonnegative Ricci curvature and $r_2>r_1>0$, then 
       \begin{align}
       A(r_2)-(n-2)\, V_{\infty}(r_2)\geq A(r_1)-(n-2)\, V_{\infty}(r_1)\, 
       \end{align}
       and equality holds if and only is the set $\{b\leq r_2\}$ is isometric to a ball of radius $r_2$ in Euclidean space. 
       \end{Cor}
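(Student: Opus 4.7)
The plan is to deduce both the monotonicity and the rigidity directly from the derivative formula in Theorem~\ref{t:secondmono2}, treating this corollary as a straightforward analogue of the corresponding corollary of the first monotonicity formula (Theorem~\ref{t:mainmono}).

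First, for the inequality: Theorem~\ref{t:secondmono2} gives the identity
\begin{align}
\left(A-(n-2)\,V_{\infty}\right)'(r)=\frac{1}{2r}\int_{b\leq r}\left(\left|\Hess_{b^2}-\frac{\Delta b^2}{n}g\right|^2+\Ric (\nabla b^2,\nabla b^2)\right)\,b^{-n}\, .
\end{align}
When $\Ric\geq 0$, the integrand is pointwise nonnegative (the Hessian term is a square, and the Ricci term is nonnegative by assumption), and $b^{-n}>0$ on $M\setminus\{x\}$. Hence the right-hand side is nonnegative for every $r>0$, so $A-(n-2)\,V_{\infty}$ is monotone nondecreasing in $r$. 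Integrating from $r_1$ to $r_2$ yields the asserted inequality.

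Second, for the rigidity: if equality holds for some pair $r_2>r_1>0$, then the monotone function $A-(n-2)\,V_{\infty}$ is constant on $[r_1,r_2]$, so its derivative vanishes identically there. Evaluating at $r=r_2$ and using that the integrand is nonnegative, I would conclude that on $\{0<b\leq r_2\}$ we have both
\begin{align}
\Hess_{b^2}=\tfrac{\Delta b^2}{n}\,g\quad\text{and}\quad \Ric(\nabla b^2,\nabla b^2)=0\, .
\end{align}
At this stage the argument is identical to the rigidity step in the corollary following Theorem~\ref{t:mainmono}: section~1 of \cite{ChC1} then implies that $\{b\leq r_2\}$ is a metric cone with vertex $x$, and that $b$ is the distance to the vertex. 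Since $M$ is smooth near $x$ and Euclidean space is the only smooth metric cone, $\{b\leq r_2\}$ must be isometric to a ball of radius $r_2$ in $\RR^n$.

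The proof is essentially routine given the derivative identity in Theorem~\ref{t:secondmono2}; the only point requiring any care is the rigidity step, where one must justify passing from vanishing of the derivative on $[r_1,r_2]$ to vanishing of the integrand on $\{b\leq r_2\}$ and then invoking the cone characterization from \cite{ChC1}. This is the only nontrivial step, but it is exactly the same invocation already carried out for the first monotonicity formula, so no new obstacle arises.
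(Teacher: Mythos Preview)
Your proposal is correct and takes essentially the same approach as the paper, which explicitly states that the proof is, with obvious changes, identical to the corresponding corollary of the first monotonicity formula. One very minor point: when you write ``evaluating at $r=r_2$'', strictly speaking constancy on $[r_1,r_2]$ only forces the derivative to vanish on the open interval $(r_1,r_2)$; but since the integral $\int_{b\leq r}(\cdots)\,b^{-n}$ is nondecreasing in $r$ and vanishes for all $r\in(r_1,r_2)$, it vanishes on $\{b<r_2\}$ and hence (as $\{b=r_2\}$ has measure zero) on $\{b\leq r_2\}$, so your conclusion stands.
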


        \begin{Thm}
        Set $J(s)=-(n-2)\,s\,V_{\infty}(s^{\frac{1}{2-n}})$, then
        \begin{align}
       J'&=A-\Vol (\partial B_1(0))-(n-2)\,V_{\infty}\, \\
       J''(s)&=-\frac{1}{2(n-2)s}\int_{b\leq s^{\frac{1}{2-n}}}\left(\left|\Hess_{b^2}-\frac{\Delta b^2}{n}g\right|^2+\Ric (\nabla b^2,\nabla b^2)\right)\,b^{-n}\, .
       \end{align}
       \end{Thm}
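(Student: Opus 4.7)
The plan is to view this as a direct consequence of the chain rule together with the already-established formula \eqref{e:diff} for $V_\infty'$ and the second monotonicity theorem (Theorem \ref{t:secondmono2}), which controls $(A-(n-2)V_\infty)'$. There is no real geometry to do here: both identities are bookkeeping once one sets up the substitution $r = s^{1/(2-n)}$ carefully.

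First I would compute $dr/ds$. Writing $r=s^{1/(2-n)}$, so that $s=r^{2-n}$, one finds
\begin{align}
\frac{dr}{ds}=\frac{1}{2-n}\,s^{\frac{1}{2-n}-1}=\frac{r}{(2-n)\,s}\, .
\end{align}
Next I would differentiate the defining formula $J(s)=-(n-2)\,s\,V_\infty(r)$ by the product rule and apply \eqref{e:diff} in the form $V_\infty'(r)=(A(r)-\Vol(\partial B_1(0)))/r$. The product rule gives
\begin{align}
J'(s)=-(n-2)\,V_\infty(r)-(n-2)\,s\,V_\infty'(r)\,\frac{dr}{ds}\, ,
\end{align}
and the factor $-(n-2)\,s\,\frac{dr}{ds}$ collapses to $r$. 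Substituting the expression for $V_\infty'(r)$ then yields precisely
\begin{align}
J'(s)=A(r)-\Vol(\partial B_1(0))-(n-2)\,V_\infty(r)\, ,
\end{align}
which is the first claim.

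For the second claim I would differentiate $J'(s)$ once more with respect to $s$, noting that $A-\Vol(\partial B_1(0))-(n-2)V_\infty$ is, up to the additive constant, exactly the quantity appearing on the left of Theorem \ref{t:secondmono2}. Hence
\begin{align}
J''(s)=\frac{dr}{ds}\,\bigl[A-(n-2)V_\infty\bigr]'(r)=\frac{r}{(2-n)\,s}\cdot\frac{1}{2r}\int_{b\leq r}\!\!\left(\left|\Hess_{b^2}-\tfrac{\Delta b^2}{n}g\right|^2+\Ric(\nabla b^2,\nabla b^2)\right)b^{-n}\, .
\end{align}
The factors of $r$ cancel, and the $1/(2-n)$ combines with $1/2$ to give $-1/(2(n-2)\,s)$, producing the stated formula.

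The computation is essentially mechanical; the only step that requires a small amount of care is keeping track of signs and of the fact that $s\mapsto r=s^{1/(2-n)}$ is orientation-reversing (since $n\geq 3$), which is what allows the monotonicity sign of the right-hand side to come out with the displayed minus. I do not anticipate a genuine obstacle beyond this: all the analytic content (validity of the Bochner-type identity on $\{b\leq r\}$, the Stokes' theorem integration, and the small-$r$ asymptotics needed for \eqref{e:diff} and Theorem \ref{t:secondmono2}) has already been established in Lemma \ref{e:LaplE}, Lemma \ref{l:ref}, and Theorem \ref{t:secondmono1}.
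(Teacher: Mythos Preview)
Your proof is correct and follows exactly the approach the paper intends: the paper's own proof says only that the result ``follows from a straightforward computation combined with Theorem \ref{t:secondmono2},'' and you have simply written out that computation explicitly, using \eqref{e:diff} for the first identity and Theorem \ref{t:secondmono2} (via the chain rule under $r=s^{1/(2-n)}$) for the second.
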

       
        \begin{proof}
        This follows from a straightforward computation combined with Theorem \ref{t:secondmono2}.  
        \end{proof}
        
        We next use \cite{CM2} to calculate the asymptotic description of $A$ and $V$ for manifolds with nonnegative Ricci curvature:
        
        \begin{Thm}  \label{t:asymptotic}
       If $M^n$ has nonnegative Ricci curvature, then
       \begin{align}
       \lim_{r\to \infty} \frac{A(r)}{\Vol (\partial B_1(0))}&=\left(\frac{\V_M}{\Vol (B_1(0))}\right)^{\frac{2}{n-2}}\, ,\label{e:asymptotic1}\\
       \lim_{r\to \infty} \frac{V(r)}{\Vol (B_1(0))}&=\left(\frac{\V_M}{\Vol (B_1(0))}\right)^{\frac{2}{n-2}}\, .\label{e:asymptotic2}
       \end{align}
       \end{Thm}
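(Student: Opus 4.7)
The plan is to set $\alpha=(\V_M/\Vol(B_1(0)))^{2/(n-2)}$ and proceed in three stages.

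First, I will establish the existence of $A_\infty=\lim_{r\to\infty}A(r)$ and $V_\infty=\lim_{r\to\infty}V(r)$, together with the identity $A_\infty=n\,V_\infty$. Existence follows from the monotonicity of $A$ and $V$ asserted in Corollary \ref{c:properties} together with the uniform upper bounds $A,V\leq C$ supplied by the sharp gradient estimate in Theorem \ref{t:sharpGreen} and the lemma bounding $A$ and $V$ by $\sup|\nabla b|^2$. For the identity, monotonicity of $V$ forces $V'\in L^1([1,\infty))$, while Lemma \ref{l:coarea} gives $rV'(r)=A(r)-nV(r)\to A_\infty-nV_\infty$. A nonzero value of this limit would force $V'(r)$ to behave like a nonzero multiple of $1/r$ at infinity, contradicting integrability; hence $A_\infty=n\,V_\infty$.

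Second, I will carry out the corresponding computation on a model metric cone $C$ with vertex $o$ and normalized volume $\V_C=\V_M$. By rotational symmetry the Green's function with pole at $o$ has the form $G_C=c\,d(o,\cdot)^{2-n}$, and the constant $c$ is pinned down by requiring the flux of $|\nabla G_C|$ across spheres around $o$ to equal its Euclidean value $(n-2)\Vol(\partial B_1(0))$; this forces $c=\Vol(B_1(0))/\V_C$. Consequently $b_C=(\V_C/\Vol(B_1(0)))^{1/(n-2)}\,d(o,\cdot)$ and $|\nabla b_C|^2\equiv\alpha$. Substituting into the definitions of $A$ and $V$ and using $\Vol(B_r^C(o))=\V_C r^n$ yields the constant values $A_C(r)\equiv\alpha\Vol(\partial B_1(0))$ and $V_C(r)\equiv\alpha\Vol(B_1(0))$ for every $r>0$.

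Third, I will transfer this cone computation back to $M$. The input from \cite{CM2} is that on manifolds with nonnegative Ricci curvature and Euclidean volume growth, $b$ is asymptotic to $\alpha^{1/2}d(x,\cdot)$ at infinity, together with $L^1$-averaged control on $|\nabla b|^2$ on large annuli giving $|\nabla b|^2\to\alpha$ in this averaged sense. Combined with $\Vol(B_r(x))/r^n\to\V_M$ from Bishop-Gromov and the coarea identity
\[
V(r)=r^{-n}\int_0^r s^{n-1}A(s)\,ds
\]
(a rewriting of \eqr{e:coareaforV}), the cone computation of Step 2 identifies $V_\infty=\alpha\Vol(B_1(0))$. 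Step 1 then delivers $A_\infty=n\,\alpha\Vol(B_1(0))=\alpha\Vol(\partial B_1(0))$, as required.

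The main obstacle is Step 3: one must upgrade the pointed Gromov--Hausdorff convergence of the rescalings $(M,r_i^{-2}g)$ to the tangent cone to genuine convergence of the integrals $A(r_i)$ and $V(r_i)$. This is precisely what \cite{CM2} supplies, via both the asymptotic identification of $b$ with a multiple of $d(x,\cdot)$ and enough $L^p$ control on $|\nabla b|$ to justify passing the integrals through the limit. Given that input the remainder of the argument is a direct calculation combined with the monotonicity already established in this section.
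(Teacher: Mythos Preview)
Your argument for the Euclidean volume growth case is essentially the paper's argument, reorganized: the paper also first identifies $V_\infty$ via \cite{CM2} (specifically (3.38) there) together with Bishop--Gromov, and then reads off $A_\infty$ from the coarea relation \eqr{e:coareaforV}. Your Step~1, deducing $A_\infty=nV_\infty$ from Lemma~\ref{l:coarea} and the integrability of $V'$, is a crisp way to package what the paper states more tersely as ``Equation~\eqr{e:coareaforV} gives that this constant is the desired one''. Note, though, that you are invoking Corollary~\ref{c:properties} and Theorem~\ref{t:sharpGreen}, which in the paper's linear order appear \emph{after} Theorem~\ref{t:asymptotic}; this is not circular (neither depends on Theorem~\ref{t:asymptotic}), but you should flag it, and in fact once you have invoked Corollary~\ref{c:properties} the separate appeal to Theorem~\ref{t:sharpGreen} for boundedness is redundant.

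There is, however, a genuine gap: you have not treated the sub-Euclidean case $\V_M=0$. The theorem is stated for all nonparabolic manifolds with nonnegative Ricci curvature, and when $\V_M=0$ your Step~2 (the cone with $\V_C=\V_M$) degenerates and the input from \cite{CM2} you cite in Step~3 is explicitly conditioned on Euclidean volume growth. The paper handles this case by an entirely separate argument: combining the Li--Yau lower bound for the Green's function with the Cheng--Yau gradient estimate yields, for $d(x,y)\geq r_0$,
\[
|\nabla b|(y)\leq C\,\bigl(r_0^{-n}\Vol(B_{r_0}(x))\bigr)^{1/(n-2)},
\]
and the right-hand side tends to $0$ as $r_0\to\infty$ when $\V_M=0$. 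This forces $|\nabla b|\to 0$ at infinity, whence $A(r)\to 0$ and $V(r)\to 0$. You should add this step.
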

       
          \begin{proof}  
          By the Bishop-Gromov volume comparison theorem if $r\geq r_0>0$, then
          \begin{align}
          r^{-n}\,\Vol (B_r(x))\leq r_0^{-n}\,\Vol (B_{r_0}(x))\, .
       \end{align}
       Hence, by the Li-Yau, \cite{LY}, lower bound for the Green's function
        \begin{align}
        C\,\int_{d(x,y)}^{\infty} \frac{s}{\Vol (B_{s}(x))}\,ds\leq G(x,y)\ .
        \end{align}
        It follows that if $d(x,y)\geq r_0$, then
        \begin{align}
        G(x,y)\geq \frac{C}{r_0^{-n}\,\Vol (B_{r_0}(x))}\,d^{2-n}(x,y)\, ,
        \end{align}
        and thus by the Cheng-Yau, \cite{CgY}, gradient estimate at such a $y$
       \begin{align}
        |\nabla b|=b\,|\nabla \log b|\leq C\,\frac{G^{\frac{1}{2-n}}}{r} 
        \leq C\, \left[r_0^{-n}\,\Vol (B_{r_0}(x))\right]^{\frac{1}{n-2}}\, .
        \end{align}
        From this the claim follows if $M$ has sub-Euclidean volume growth, i.e. if $\V_M=0$.
        
         Suppose therefore that $M$ has Euclidean volume growth. In this case \eqr{e:asymptotic2} follows from the Bishop-Gromov volume comparison theorem together with (3.38) on page 1374 of \cite{CM2}; cf. also with the proof of Theorem \ref{t:asympsharpGreen} and \cite{ChC1}.   To get \eqr{e:asymptotic1} we argue as follows:
         From Theorem \ref{t:mainmono} and since $V$ is almost constant for $r$ large we have by \eqr{e:asymptotic2} and \cite{CM2} that $A$ is almost constant for $r$ large.    Equation \eqr{e:coareaforV} gives that this constant is the desired one. 
         \end{proof}
        
        It follows easily from Theorem \ref{t:asymptotic} and \eqr{e:diff} that we have the following characterization of Euclidean space as the only manifold with nonnegative Ricci curvature where $V_{\infty}$ is bounded.
               
       \begin{Cor}
       Let $M^n$ be a manifold with nonnegative Ricci curvature, then 
        \begin{align}
       \inf V_{\infty}>-\infty
       \end{align}
       if and only if $M$ is Euclidean space.
       \end{Cor}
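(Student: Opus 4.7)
The plan is to use the asymptotic formula for $A$ from Theorem \ref{t:asymptotic} together with the differential identity \eqref{e:diff} to force $V_{\infty}$ to diverge to $-\infty$ whenever $M$ fails to be Euclidean.

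First, for the easy direction, if $M=\RR^n$ then with $x$ the pole the Green's function is literally $r^{2-n}$, so $b$ is the Euclidean distance and $|\nabla b|\equiv 1$. Plugging into \eqref{e:definfty2} the integrand vanishes identically, so $V_{\infty}\equiv 0$ and in particular $\inf V_{\infty}=0>-\infty$.

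For the converse I would argue by contrapositive: assume $M$ is not isometric to $\RR^n$ and show $\inf V_{\infty}=-\infty$. The Bishop-Gromov volume comparison theorem (in its rigid form) asserts that $\Vol(r)=r^{-n}\,\Vol(B_r(x))$ is monotone nonincreasing with limit $\Vol(B_1(0))$ as $r\to 0$ and limit $\V_M$ as $r\to\infty$, and equality $\V_M=\Vol(B_1(0))$ forces $M$ to be Euclidean space. Hence under our assumption $\V_M<\Vol(B_1(0))$, and so by \eqref{e:asymptotic1} of Theorem \ref{t:asymptotic}
\begin{align}
\lim_{r\to\infty}\frac{A(r)}{\Vol(\partial B_1(0))}=\left(\frac{\V_M}{\Vol(B_1(0))}\right)^{\frac{2}{n-2}}<1\, .
\end{align}
Consequently there exist $\delta>0$ and $R>1$ with $A(r)-\Vol(\partial B_1(0))\leq -\delta$ for all $r\geq R$.

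Now apply \eqref{e:diff}, which reads $V_{\infty}'(r)=(A(r)-\Vol(\partial B_1(0)))/r$. Integrating from $R$ to $r>R$ gives
\begin{align}
V_{\infty}(r)-V_{\infty}(R)=\int_R^r\frac{A(s)-\Vol(\partial B_1(0))}{s}\,ds\leq -\delta\,\log\frac{r}{R}\, ,
\end{align}
which tends to $-\infty$ as $r\to\infty$. Thus $\inf V_{\infty}=-\infty$, completing the contrapositive. The only substantive step is the rigidity input identifying equality in Bishop-Gromov with Euclidean space; everything else is bookkeeping from the already proved Theorem \ref{t:asymptotic} and the first-order identity \eqref{e:diff}.
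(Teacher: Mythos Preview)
Your proof is correct and follows exactly the route the paper indicates: the paper simply says the corollary ``follows easily from Theorem \ref{t:asymptotic} and \eqr{e:diff}'', and you have spelled out precisely that argument, supplying the one extra standard ingredient (Bishop--Gromov rigidity to ensure $\V_M<\Vol(B_1(0))$ when $M\neq\RR^n$). There is nothing to add.
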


            \subsection{The $\cL$ operator and estimates for $b$}
            
            Define a drift Laplacian on the manifold $M$ by
       \begin{align}
       \cL\,u =G^{-2}\dv \left(G^2\,\nabla u\right)=\Delta\,  u+2\, \langle \nabla \log G,\nabla u\rangle\, . 
       \end{align}
       From Lemma \ref{e:LaplE} we get the following useful result:

       \begin{Lem}   \label{l:cL}
      \begin{align}  \label{e:cL}
       \cL\,|\nabla b|^2 &= \frac{1}{2b^2}\,\left(\left|\Hess_{b^2}-\frac{\Delta b^2}{n}g\right|^2+\Ric (\nabla b^2,\nabla b^2)\right)\, ,\\
       \cL\,b^2 &=2\,(4-n)\,|\nabla b|^2\, ,\\
       \cL\,b^{n-2} &=0\, .
       \end{align}
       \end{Lem}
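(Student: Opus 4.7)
The three identities all reduce to short computations once one recognizes the right alternative form of $\cL$. My first step is to verify that for the harmonic weight $G$ one has
\begin{align*}
\cL u=\frac{1}{G}\,\Delta (G\,u)\, ,
\end{align*}
for any smooth $u$. This is a direct one-line manipulation: expand $\dv(G^{2}\nabla u)=\dv\bigl(G\,\nabla(Gu)-uG\,\nabla G\bigr)$, use the product rule twice, and cancel the cross terms, noting $\Delta G=0$. Having this reformulation in hand makes the first and third identities essentially automatic.

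Next I would prove the three identities in the easiest order. For $\cL b^{n-2}$, since $b^{n-2}=G^{-1}$, the reformulation gives $\cL(G^{-1})=G^{-1}\Delta(1)=0$; equivalently, one can see it directly from $\dv(G^{2}\nabla G^{-1})=-\dv(\nabla G)=-\Delta G=0$. For $\cL b^{2}$, I would just compute using the defining form $\cL u=\Delta u+2\langle\nabla\log G,\nabla u\rangle$; since $\nabla\log G=(2-n)\,b^{-1}\nabla b$, we get $2\langle\nabla\log G,\nabla b^{2}\rangle=4(2-n)|\nabla b|^{2}$, and combined with the already-recorded $\Delta b^{2}=2n|\nabla b|^{2}$ this gives $\cL b^{2}=(2n+8-4n)|\nabla b|^{2}=2(4-n)|\nabla b|^{2}$.

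The first identity is the substantive one, but the work has already been done in Lemma~\ref{e:LaplE}. Using the reformulation $\cL u=G^{-1}\Delta(Gu)$ with $u=|\nabla b|^{2}$, I would invoke \eqr{e:bsub} to write
\begin{align*}
\cL |\nabla b|^{2}
=\frac{1}{G}\,\Delta\bigl(|\nabla b|^{2}G\bigr)
=\frac{1}{2G}\,\left(\left|\Hess_{b^{2}}-\tfrac{\Delta b^{2}}{n}g\right|^{2}+\Ric(\nabla b^{2},\nabla b^{2})\right) b^{-n}\, .
\end{align*}
Since $G=b^{2-n}$, we have $G^{-1}b^{-n}=b^{-2}$, and this is exactly the claimed formula.

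There is really no main obstacle, since everything is bookkeeping given Lemma~\ref{e:LaplE}; the only point worth double-checking is the sign and normalization in the identity $\cL u=G^{-1}\Delta(Gu)$, which depends on $\Delta G=0$ (so that on a manifold where $G$ is the Green's function, the identity is to be understood away from the pole $x$, where all three equations are to be read). Once that is in place the three equalities drop out in a few lines each.
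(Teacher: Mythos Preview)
Your argument is correct and, at bottom, the same as the paper's: all three identities are short computations, with the substantive first one delegated to Lemma~\ref{e:LaplE}. The only difference is packaging. You first isolate the identity $\cL u=G^{-1}\Delta(Gu)$ (valid because $\Delta G=0$) and then feed in \eqr{e:bsub}; the paper instead reads off \eqr{e:bochner3} directly, observing that dividing its left-hand side by $b^{2}$ is exactly $\Delta|\nabla b|^{2}+2\langle\nabla\log G,\nabla|\nabla b|^{2}\rangle=\cL|\nabla b|^{2}$. Since \eqr{e:bsub} was itself derived from \eqr{e:bochner3} by the Leibniz rule, the two routes are equivalent; your reformulation is a clean way to state the mechanism, while the paper's version is one step shorter. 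For the second and third identities your computations match the paper's essentially verbatim.
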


       \begin{proof}
       The first equality follows directly from \eqr{e:bochner3}.  
       The second claim follows from an easy computation using that $\Delta\,b^2=2n\,|\nabla b|^2$ 
       and the last claim follows easily from that $\nabla b^{n-2}=\nabla G^{-1}=-G^{-2}\,\nabla G$ and that $G$ is harmonic.
    \end{proof}

       It follows from this lemma that on a manifold with nonnegative Ricci curvature at a maximum (or minimum) for $|\nabla b|^2$ the hessian of $b^2$ is a multiple of the identity.  
       Since $\Delta b^2=2n\, |\nabla b|^2$ we get that at a maximum
       \begin{align}
       \Hess_{b^2}=2\,|\nabla b|^2\, g\, .
       \end{align}

            The first two inequalities of the next lemma are proven assuming that $G$ is the Green's function whereas the 
       third inequality holds for any positive harmonic function $G$ with $\frac{1}{G}$ proper.
    
       \begin{Lem}   \label{l:cLsub1}
       On a manifold with nonnegative Ricci curvature if $x$ is a fixed point and $r$ is the distance to $x$, then
       \begin{align}  
       b&\leq r\, ,\\
       |\nabla b|&\leq C=C(n)\, ,\\
       0&\leq \cL\,|\nabla b|^2\, .
       \end{align}
        \end{Lem}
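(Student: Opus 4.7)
The three inequalities are essentially independent of one another, so I would handle them in sequence, and only the first really requires work; the second is a routine consequence of Cheng--Yau and the first, and the third is an immediate reading of Lemma \ref{l:cL}.

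For the first inequality $b \leq r$, equivalently $G \geq r^{2-n}$, my plan is to combine the Li--Yau lower bound on the Green's function \cite{LY} with the Bishop--Gromov volume comparison theorem. Li--Yau gives
\begin{align}
G(x,y) \geq c_n \int_{d(x,y)}^{\infty} \frac{s}{\Vol(B_s(x))}\,ds\, ,
\end{align}
while Bishop--Gromov gives the upper bound $\Vol(B_s(x)) \leq \Vol(B_1(0))\, s^n$. Substituting the latter into the integral produces $d(x,y)^{2-n}/((n-2)\Vol(B_1(0)))$, and under the paper's normalization (the Green's function of Euclidean space equals $r^{2-n}$) the constant $c_n$ is exactly $(n-2)\Vol(B_1(0))$, so the constants cancel to give $G(x,y) \geq d(x,y)^{2-n}$. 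This step is the only place where it is essential that $G$ be the Green's function.

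For the second inequality $|\nabla b| \leq C(n)$, I would first observe that since $b = G^{1/(2-n)}$ one has $|\nabla b| = \frac{1}{n-2}\, b \,|\nabla \log G|$. For any $y \in M \setminus \{x\}$ with $r = d(x,y)$, the ball $B_{r/2}(y)$ avoids the pole of $G$, so $G$ is a positive harmonic function on it, and the Cheng--Yau gradient estimate \cite{CgY} (which uses $\Ric \geq 0$) yields $|\nabla \log G|(y) \leq C(n)/r$. Combining with the first inequality $b(y) \leq r$ just proved gives
\begin{align}
|\nabla b|(y) \leq \tfrac{C(n)}{n-2}\, \tfrac{b(y)}{r} \leq \tfrac{C(n)}{n-2}\, .
\end{align}

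For the third inequality $0 \leq \cL\, |\nabla b|^2$, I would simply quote the first identity of Lemma \ref{l:cL} already proved,
\begin{align}
\cL\,|\nabla b|^2 = \frac{1}{2b^2}\left(\left|\Hess_{b^2}-\frac{\Delta b^2}{n}g\right|^2+\Ric(\nabla b^2,\nabla b^2)\right)\, ,
\end{align}
and note that both terms on the right are nonnegative under $\Ric \geq 0$. This derivation only uses that $G$ is positive and harmonic with $1/G$ proper, consistent with the remark preceding the lemma.

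The main obstacle is the first inequality: tracing through the Li--Yau lower bound with the paper's specific normalization (so that the Euclidean Green's function is $r^{2-n}$ rather than a dimensional multiple) so that the constants collapse cleanly to produce $G \geq r^{2-n}$. Once that is in place, the remaining two inequalities follow almost immediately from Cheng--Yau and from Lemma \ref{l:cL}, respectively.
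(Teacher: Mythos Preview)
Your arguments for the second and third inequalities are exactly what the paper does: Cheng--Yau on $G$ combined with $b\leq r$ for the gradient bound, and Lemma \ref{l:cL} for the $\cL$-subharmonicity. No issue there.

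The gap is in the first inequality. The Li--Yau lower bound
\[
G(x,y) \geq c_n \int_{d(x,y)}^{\infty} \frac{s}{\Vol(B_s(x))}\,ds
\]
does \emph{not} carry the sharp constant $c_n = (n-2)\Vol(B_1(0))$. That bound comes from integrating the Li--Yau heat kernel lower bound in time, and the Gaussian constants there are not sharp even on $\RR^n$; consequently the resulting $c_n$ is strictly smaller than $(n-2)\Vol(B_1(0))$. (Indeed, the paper itself, in the proof of Theorem \ref{t:asymptotic}, writes this bound with an unspecified constant $C$ in front, not claiming sharpness.) So your route only yields $b \leq C(n)\,r$ for some $C(n) > 1$, which suffices to feed into the Cheng--Yau step but does not establish the stated inequality $b\leq r$.

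The paper instead obtains the sharp inequality via the Laplacian comparison theorem and the maximum principle: since $\Ric\geq 0$ gives $\Delta r \leq (n-1)/r$, the function $r^{2-n}$ is subharmonic on $M\setminus\{x\}$, while $G$ is harmonic with matching asymptotics at the pole (Lemma \ref{l:ref}) and both tend to $0$ at infinity, so $r^{2-n}\leq G$. This comparison argument is both shorter and sharp; your Li--Yau approach would need to be replaced by it (or supplemented with a separate sharp comparison) to actually prove $b\leq r$.
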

        
        \begin{proof}
        The last claim is a direct consequence of the previous lemma.
        
        To see the first and second claim observe first that it follows from the maximum principle together with the Laplace comparison theorem that
     \begin{align}
     r^{2-n}\leq G\, .
     \end{align}
     Therefore on such a manifold 
     \begin{align}
     b\leq r\, .
     \end{align}
     
     To see the second claim observe first that
     \begin{align}
       \nabla \log G=(2-n)\,\nabla \log b\, .
       \end{align}
     Combining this with the Cheng-Yau gradient estimate, \cite{CgY}, applied to the harmonic function $G$ gives that for some constant $C=C(n)$
      \begin{align}
     |\nabla b|\leq \frac{C\,b}{r}\leq C\, .
     \end{align}
     
     The last inequality is an immediate consequence of Lemma \ref{l:cL}.
        \end{proof}

        Recall that for a smooth function $u:M\setminus \{x\}\to \RR$ we set
       \begin{align}
     I_u(r)=r^{1-n}\int_{b=r}u\,|\nabla b|\, .
     \end{align}

        \begin{Lem}  \label{l:cLsub} 
        Let $M^n$ be a manifold and suppose that $u:M\setminus \{x\}\to \RR$ is a smooth function, then  for $r_2>r_1>0$
       \begin{align}
     I_u'(r_2)= r_2^{n-3}\,r_1^{3-n}\,I_u'(r_1)+ r_2^{n-3}\int_{r_1\leq b\leq r_2}\cL\,u\,G^2\, .
     \end{align}
        \end{Lem}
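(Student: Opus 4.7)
The plan is to apply the divergence theorem to $\dv(G^2\,\nabla u)$ over the annular region $\{r_1\leq b\leq r_2\}$, and then translate the resulting boundary fluxes into the quantities $I_u'(r_1)$ and $I_u'(r_2)$ via the identity
\[
I_u'(r)=r^{1-n}\int_{b=r}u_n=r^{1-n}\int_{b\leq r}\Delta u\,,
\]
already recorded in the introduction to Section \ref{s:monotone}. So first I would write, by definition of the drift Laplacian,
\[
\int_{r_1\leq b\leq r_2}\cL u\,G^2=\int_{r_1\leq b\leq r_2}\dv(G^2\,\nabla u)\,,
\]
and apply the divergence theorem on the annulus $\{r_1\leq b\leq r_2\}$ whose outward conormal on the outer boundary $\{b=r_2\}$ is $\nabla b/|\nabla b|$ and on the inner boundary $\{b=r_1\}$ is $-\nabla b/|\nabla b|$.

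Second, I would use that $G=b^{2-n}$ is constant on each level set $\{b=r\}$, so $G^2=r^{2(2-n)}$ there and can be pulled out of the boundary integrals. This produces
\[
\int_{r_1\leq b\leq r_2}\cL u\,G^2=r_2^{\,4-2n}\int_{b=r_2}u_n - r_1^{\,4-2n}\int_{b=r_1}u_n\,.
\]

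Third, I would substitute $\int_{b=r}u_n=r^{n-1}I_u'(r)$ from the displayed identity above. The exponents combine as $r^{4-2n}\cdot r^{n-1}=r^{3-n}$, yielding
\[
\int_{r_1\leq b\leq r_2}\cL u\,G^2=r_2^{\,3-n}\,I_u'(r_2)-r_1^{\,3-n}\,I_u'(r_1)\,.
\]
Solving for $I_u'(r_2)$ and multiplying by $r_2^{n-3}$ gives exactly the stated formula.

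The one subtlety — and the only place requiring care — is that the inner boundary $\{b=r_1\}$ encloses the pole $x$ where $G$ blows up. However, the region $\{r_1\leq b\leq r_2\}$ itself avoids a neighborhood of $x$ (since $b$ is proper and positive away from $x$), so $u$, $G$, and $\nabla b$ are all smooth on this annulus and the classical divergence theorem applies without any limiting argument near the pole. If $\{b=r_i\}$ were to fail to be a regular level set for some $r_i$, one handles this in the standard way via Sard's theorem by first proving the identity for almost every $r_1,r_2$ and then using the continuity in $r$ of both sides.
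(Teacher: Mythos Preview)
Your proof is correct and follows essentially the same approach as the paper: apply the divergence theorem to $\dv(G^2\,\nabla u)$ on the annulus $\{r_1\le b\le r_2\}$, pull out $G^2=r^{4-2n}$ on each boundary component, and convert $\int_{b=r}u_n$ to $r^{n-1}I_u'(r)$. Your added remarks about avoiding the pole and handling possibly singular level sets via Sard's theorem are reasonable precautions that the paper leaves implicit.
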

        
        \begin{proof}
        Since for $r_2>r_1>0$
        \begin{align}
        \int_{r_1\leq b\leq r_2}\cL\,u\,G^2= \int_{r_1\leq b\leq r_2}\dv(G^2\,\nabla u)=r_2^{4-2n}\int_{b=r_2}u_n-r_1^{4-2n}\int_{b=r_1}u_n\, .
        \end{align}
        It follows that
        \begin{align}
        \int_{b=r_2}u_n= r_2^{2n-4}\,r_1^{4-2n}\int_{b=r_1}u_n+r_2^{2n-4}\int_{r_1\leq b\leq r_2}\cL\,u\,G^2\,
        \end{align}
        and therefore
        \begin{align}
        I_u'(r_2)= r_2^{n-3}\,r_1^{3-n}\,I_u'(r_1)+ r_2^{n-3}\int_{r_1\leq b\leq r_2}\cL\,u\,G^2\, .
        \end{align}
        \end{proof}
        
        \begin{Cor}  \label{c:cLmono} 
        Let $M^n$ be a manifold with $n\geq 3$ and suppose that $u:M\setminus \{x\}\to \RR$ is a $\cL$-subharmonic function that is bounded from above, then 
       \begin{align}
     I_u(r)=r^{1-n}\int_{b=r}u\,|\nabla b|\, .
     \end{align}
     is monotone nonincreasing.
        \end{Cor}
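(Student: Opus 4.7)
The plan is to argue by contradiction using Lemma~\ref{l:cLsub}: I will propagate any strict positive value of $I_u'$ forward to large radii, then derive a contradiction with the boundedness of $u$ from above combined with the constancy of $I_1$.

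Assume toward contradiction that $I_u$ fails to be nonincreasing at some radius $r_1>0$, so $I_u'(r_1)>0$. For every $r_2>r_1$, Lemma~\ref{l:cLsub} gives
\begin{align}
I_u'(r_2) \;=\; r_2^{n-3}\, r_1^{3-n}\, I_u'(r_1) \;+\; r_2^{n-3}\int_{r_1\leq b\leq r_2}\cL u\, G^2.
\end{align}
Because $u$ is $\cL$-subharmonic ($\cL u\geq 0$) and $G^2\geq 0$, the second term is nonnegative, so setting $c:=r_1^{3-n}\, I_u'(r_1)>0$ I conclude $I_u'(r_2)\geq c\,r_2^{n-3}$ for all $r_2\geq r_1$. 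Integrating from $r_1$ to an arbitrary $R>r_1$ then yields
\begin{align}
I_u(R)-I_u(r_1) \;\geq\; c\int_{r_1}^R s^{n-3}\,ds,
\end{align}
and since $n\geq 3$ the integral on the right diverges as $R\to\infty$ (linearly for $n=3$, polynomially otherwise), forcing $I_u(R)\to+\infty$.

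To close the contradiction I use the constancy of $I_1(r)=r^{1-n}\int_{b=r}|\nabla b|$ noted at the beginning of Section~\ref{s:monotone}. Setting $U:=\sup_{M\setminus\{x\}}u<\infty$, the function $u-U$ is nonpositive, so
\begin{align}
I_u(r)-U\,I_1(r) \;=\; r^{1-n}\int_{b=r}(u-U)\,|\nabla b| \;\leq\; 0,
\end{align}
and therefore $I_u(r)\leq U\,I_1(1)$ uniformly in $r$. This uniform upper bound is incompatible with $I_u(R)\to+\infty$, so no such $r_1$ can exist: $I_u'(r)\leq 0$ for every $r>0$, proving monotonicity.

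The principal step to execute carefully is the propagation step via Lemma~\ref{l:cLsub}, namely reading the identity so that positivity of $I_u'$ at a single radius forces the uniform lower bound $I_u'(r)\geq c\,r^{n-3}$ for all larger $r$. Once that is in hand the mechanism is automatic: the condition $n\geq 3$ guarantees that $\int^{\infty}s^{n-3}\,ds=\infty$, which is incompatible with the uniform upper bound extracted from $\sup u<\infty$ and the constancy of $I_1$.
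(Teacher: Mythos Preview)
Your proof is correct and follows essentially the same route as the paper: use Lemma~\ref{l:cLsub} together with $\cL u\geq 0$ to obtain $I_u'(r_2)\geq r_2^{n-3}r_1^{3-n}I_u'(r_1)$, then argue that a positive value of $I_u'$ at any radius would force $I_u$ to be unbounded above, contradicting the bound $I_u\leq U\,I_1$ coming from $\sup u<\infty$ and the constancy of $I_1$. The paper's version is simply more terse, asserting directly that boundedness of $I_u$ forces $I_u'\leq 0$ without spelling out the integration and the role of $I_1$ as you have.
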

        
        \begin{proof}
        Since $\cL\,u\geq 0$ it follows from Lemma \ref{l:cLsub} that for $r_2>r_1>0$
        \begin{align}
        I_u'(r_2)\geq r_2^{n-3}\,r_1^{3-n}\,I_u'(r_1)\, .
        \end{align}
        Since $u$ is bounded from above, then $I_u$ is bounded from above and hence, we conclude that 
        \begin{align}
        I_u'\leq 0\, .
        \end{align}
        \end{proof}

        Note that if $u:M\to \RR$ is a smooth function, then 
        \begin{align}
        \lim_{r\to 0}I_u'(r)&=0 \, ,\\
        \lim_{r\to 0}\frac{I_u'(r)}{r}&=\Vol (B_1(0))\,\Delta u(x) \, .
        \end{align}
        
        \begin{Cor}  \label{l:cLmono} 
        Let $M^n$ be a $n$-dimensional manifold and suppose that $u:M\to \RR$ is a smooth function.  If $n=3$, then 
       \begin{align}
     I_u'(r)= \int_{b\leq r}\cL\,u\,G^2\, ,
     \end{align}
     and if $n=4$, then
     \begin{align}
     I_u'(r)= r\, \Vol (\partial B_1(0))\,\Delta u(x)+r\, \int_{b\leq r}\cL\,u\,G^2\, .
     \end{align}
        \end{Cor}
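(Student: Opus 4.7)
The plan is to derive both identities as direct consequences of Lemma \ref{l:cLsub} by sending $r_1 \to 0$, using the two limit computations recorded in the remark immediately preceding the corollary. Recall that Lemma \ref{l:cLsub} asserts
\begin{align*}
I_u'(r_2) = r_2^{n-3}\,r_1^{3-n}\,I_u'(r_1) + r_2^{n-3}\int_{r_1\leq b\leq r_2}\cL u\,G^2\, ,
\end{align*}
so the only thing to understand in each dimension is the behavior of the boundary term $r_2^{n-3}\,r_1^{3-n}\,I_u'(r_1)$ as $r_1\to 0$.

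For $n=3$, both prefactors $r_2^{n-3}$ and $r_1^{3-n}$ equal $1$, so the identity simplifies to $I_u'(r_2) = I_u'(r_1) + \int_{r_1\le b\le r_2}\cL u\,G^2$. The first line of the remark, $\lim_{r\to 0} I_u'(r) = 0$, then gives the claim after passing to the limit $r_1\to 0$. For $n=4$, the identity reads $I_u'(r_2) = r_2\bigl(I_u'(r_1)/r_1\bigr) + r_2\int_{r_1\le b\le r_2}\cL u\,G^2$, and the second line of the remark, $\lim_{r\to 0} I_u'(r)/r = \Vol(B_1(0))\,\Delta u(x)$, yields the stated formula after passing to the limit. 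Both identities for the integral over $\{b\le r_2\}$ are interpreted as improper integrals obtained by this limiting procedure; the existence of the limit follows for free from the existence of the limit on the left-hand side.

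The one delicate point is the convergence of $\int_{r_1\le b\le r_2}\cL u\,G^2$ as $r_1\to 0$, since near $x$ the integrand $\cL u\cdot G^2 = \dv(G^2\nabla u)$ involves $G^2$ blowing up like $r^{2(2-n)}$. The key observation is that Lemma \ref{l:cLsub} itself rewrites this integral as the difference $r_2^{3-n}I_u'(r_2) - r_1^{3-n}I_u'(r_1)$, so its convergence as $r_1\to 0$ is equivalent to convergence of $r_1^{3-n}I_u'(r_1)$, which is precisely what the two limits in the remark supply for $n=3$ and $n=4$. Thus no additional estimate on the singular integrand is needed — the divergence-theorem structure behind Lemma \ref{l:cLsub} absorbs the singularity, and the remark provides exactly the boundary data that controls the limit in these two borderline dimensions.
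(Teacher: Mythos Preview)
Your proof is correct and is exactly the intended approach: the paper gives no separate argument, since the corollary is meant to follow immediately from Lemma~\ref{l:cLsub} together with the two displayed limits in the remark just before it, precisely as you lay out. One minor point: for $n=4$ the limit you invoke from the remark produces the constant $\Vol(B_1(0))$, whereas the corollary as printed has $\Vol(\partial B_1(0))$; this is a typo in the paper's statement, and the constant your derivation actually yields is the correct one.
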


        \subsection{Properties of $A$ and $V$}
        
        \begin{Cor}  \label{c:properties}
        On any manifold $M^n$ with nonnegative Ricci curvature and $n\geq 3$, then $A$, $V$, and $V_{\infty}$ are nonincreasing and bounded from above by what they are on $\RR^n$.  
        Moreover, $A\leq n\,V$.
        \end{Cor}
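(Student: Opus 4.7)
My plan is to cascade the four conclusions starting from the monotonicity of $A$. Specifically, I first show $A$ is nonincreasing; this yields the pointwise bound $A \leq \Vol(\partial B_1(0))$ via the small-radius limit in Lemma \ref{l:ref}. Combined with the coarea identity \eqr{e:coareaforV}, monotonicity of $A$ will give $A \leq nV$ by an averaging argument, and then Lemma \ref{l:coarea} (which reads $rV' = A - nV$) delivers $V' \leq 0$; together with $\lim_{r\to 0}V(r) = \Vol(B_1(0))$ this gives the Euclidean bound on $V$. Finally, formula \eqr{e:diff} states $V_\infty'(r) = r^{-1}(A(r) - \Vol(\partial B_1(0)))$, which with the established bound on $A$ yields $V_\infty' \leq 0$; since $V_\infty(1) = 0$ matches the Euclidean value, the comparison with $\RR^n$ follows for $r \geq 1$ directly from monotonicity.

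The one substantive step is proving $A$ nonincreasing. The natural approach is to identify $A$ as a weighted boundary integral of a bounded $\cL$-subharmonic function and invoke Corollary \ref{c:cLmono}. The right choice is $u = |\nabla b|^2$: then
$$I_u(r) = r^{1-n}\int_{b=r}|\nabla b|^2\,|\nabla b| = A(r),$$
and Lemma \ref{l:cL} gives $\cL|\nabla b|^2 = (2b^2)^{-1}\bigl(|\Hess_{b^2} - n^{-1}\Delta b^2\, g|^2 + \Ric(\nabla b^2, \nabla b^2)\bigr) \geq 0$ on any manifold of nonnegative Ricci curvature, while Lemma \ref{l:cLsub1} supplies the upper bound $|\nabla b| \leq C(n)$. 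Both hypotheses of Corollary \ref{c:cLmono} are thereby met, so $A = I_{|\nabla b|^2}$ is monotone nonincreasing as claimed.

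For the averaging inequality $A \leq nV$, \eqr{e:coareaforV} gives $V(r) = r^{-n}\int_0^r s^{n-1}A(s)\,ds$; since $A(s) \geq A(r)$ for $s \leq r$, this yields $V(r) \geq A(r)\, r^{-n}\int_0^r s^{n-1}\,ds = A(r)/n$, and the remaining conclusions cascade as sketched. I do not anticipate any obstacle beyond verifying the two hypotheses of Corollary \ref{c:cLmono}, as the entire argument reduces to pointers into lemmas already proved in the section. It is worth noting that the route via Corollary \ref{c:cLmono} is actually cleaner than trying to extract monotonicity of $A$ directly from Theorem \ref{t:mainmono} or Theorem \ref{t:secondmono1}, neither of which gives a sign for $A'$ by itself.
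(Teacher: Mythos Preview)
Your proof is correct and matches the paper's for the key step: both establish $A'\leq 0$ by recognizing $A=I_{|\nabla b|^2}$ and invoking Corollary~\ref{c:cLmono} after checking boundedness (Lemma~\ref{l:cLsub1}) and $\cL$-subharmonicity (Lemma~\ref{l:cL}) of $|\nabla b|^2$.

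Where you diverge is in the cascade to $V$, $V_\infty$, and the inequality $A\leq nV$. The paper appeals to its main monotonicity formulas: from Theorem~\ref{t:mainmono} it reads off $0\geq A'\geq 2(n-1)V'$, hence $V'\leq 0$, and then derives $A\leq nV$ a posteriori from $V'\leq 0$ via Lemma~\ref{l:coarea}; for $V_\infty$ it invokes Theorem~\ref{t:secondmono2}, noting $(n-2)V_\infty'\leq A'\leq 0$. Your route is more self-contained: you prove $A\leq nV$ first by the averaging inequality against \eqr{e:coareaforV}, then feed this into Lemma~\ref{l:coarea} to get $V'\leq 0$, and handle $V_\infty$ straight from \eqr{e:diff} and the already-established bound $A\leq \Vol(\partial B_1(0))$. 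Your argument thus never touches Theorems~\ref{t:mainmono} or~\ref{t:secondmono2}, which is a genuine economy; the paper's route, on the other hand, makes explicit the slightly stronger differential inequality $A'\geq 2(n-1)V'$. Both are short and valid; your observation that neither main theorem by itself gives a sign for $A'$ is exactly why the paper, like you, falls back on Corollary~\ref{c:cLmono} for that step.
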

        
        \begin{proof}
        By Lemma \ref{l:cLsub1} it follows that $|\nabla b|^2$ is bounded and $\cL$-subharmonic, hence, by Corollary \ref{c:cLmono} 
        \begin{align}
        A=I_{|\nabla b|^2}
        \end{align}
        is monotone nonincreasing.  Since $A$ start off what it is in Euclidean space by Lemma \ref{l:ref}, then we get the claim for $A$.  Using Theorem \ref{t:mainmono} we have that
        \begin{align}
        0\geq A'\geq 2(n-1)\,V'
        \end{align}
        this gives the claim for $V$ as $V$ also start off being equal to what it is in Euclidean space by Lemma \ref{l:ref}.  Finally, Theorem \ref{t:secondmono2} now gives that $V_{\infty}$ is nonincreasing.
        
        The last claim follows from that $V'\leq 0$ and $V'=\frac{1}{r}\left(A-n\,V\right)$ by Lemma \ref{l:coarea}.
        \end{proof}

        Combining Lemmas \ref{l:cL} and \ref{l:cLsub} also leads us to our third monotonicity formula:
        
        \begin{Thm}  \label{t:thirdmono} 
        For $r_2>r_1>0$
       \begin{align}
     r^{3-n}_2\,A'(r_2)-r^{3-n}_1\,A'(r_1)=\frac{1}{2}\int_{r_1\leq b\leq r_2}\left(\left|\Hess_{b^2}-\frac{\Delta b^2}{n}g\right|^2+\Ric (\nabla b^2,\nabla b^2)\right)\,b^{2-2n}\, .
     \end{align}
        \end{Thm}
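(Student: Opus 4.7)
The plan is to recognize that $A(r) = r^{1-n}\int_{b=r} |\nabla b|^3 = I_{|\nabla b|^2}(r)$, so that $A'(r) = I'_{|\nabla b|^2}(r)$. With this identification the theorem is a direct combination of two earlier lemmas: Lemma \ref{l:cLsub}, which gives the change in $I'_u$ between two radii in terms of the bulk integral $\int_{r_1\leq b\leq r_2} \cL u \, G^2$, and Lemma \ref{l:cL}, which identifies $\cL |\nabla b|^2$ as the desired Bochner-type expression divided by $2b^2$.

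Concretely, I would first apply Lemma \ref{l:cLsub} to the function $u = |\nabla b|^2$ to obtain
\begin{align}
I'_{|\nabla b|^2}(r_2) = r_2^{n-3}\,r_1^{3-n}\,I'_{|\nabla b|^2}(r_1) + r_2^{n-3}\int_{r_1\leq b\leq r_2} \cL\,|\nabla b|^2 \, G^2.
\end{align}
Next, I would substitute the first identity of Lemma \ref{l:cL}, namely
\begin{align}
\cL\,|\nabla b|^2 = \frac{1}{2b^2}\left(\left|\Hess_{b^2}-\frac{\Delta b^2}{n}g\right|^2+\Ric(\nabla b^2,\nabla b^2)\right),
\end{align}
together with $G^2 = b^{2(2-n)} = b^{4-2n}$, so that $\cL\,|\nabla b|^2 \cdot G^2 = \tfrac{1}{2}\, b^{2-2n}\big(|\Hess_{b^2}-\tfrac{\Delta b^2}{n}g|^2+\Ric(\nabla b^2,\nabla b^2)\big)$.

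Finally, I would multiply the resulting identity through by $r_2^{3-n}$ to cancel the factor $r_2^{n-3}$ on the right, yielding
\begin{align}
r_2^{3-n}\,A'(r_2) - r_1^{3-n}\,A'(r_1) = \frac{1}{2}\int_{r_1\leq b\leq r_2}\left(\left|\Hess_{b^2}-\frac{\Delta b^2}{n}g\right|^2+\Ric(\nabla b^2,\nabla b^2)\right) b^{2-2n},
\end{align}
which is the claimed identity. There is no real obstacle: the proof is essentially an algebraic assembly of the two cited lemmas together with the bookkeeping $G = b^{2-n}$. The only thing worth double-checking is the exponent arithmetic $-2 + (4-2n) = 2-2n$ and the cancellation of $r_2^{n-3}$ with $r_2^{3-n}$, which indeed line up.
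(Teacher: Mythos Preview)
Your proof is correct and is exactly the assembly the paper has in mind: the paper introduces the theorem with the sentence ``Combining Lemmas \ref{l:cL} and \ref{l:cLsub} also leads us to our third monotonicity formula,'' and gives no further argument. Your identification $A = I_{|\nabla b|^2}$ (also used in the proof of Corollary \ref{c:properties}) and the exponent bookkeeping are precisely what is required.
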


        By Corollary \ref{c:cLmono} in low dimensions we get the following two corollaries:
        
        \begin{Cor}  \label{t:thirdmono}
         Let $M^3$ be a $3$-dimensional manifold.  If $|\nabla b|^2$ is $C^2$ in a neighborhood of $x$, then 
         \begin{align}
     A'(r)= \frac{1}{2}\int_{b\leq r}\left(\left|\Hess_{b^2}-\frac{\Delta b^2}{n}g\right|^2+\Ric (\nabla b^2,\nabla b^2)\right)\,b^{-4}\, ;
     \end{align}
     if in addition $M$ has nonnegative Ricci curvature, then $M$ is flat $\RR^3$.
         \end{Cor}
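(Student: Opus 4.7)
The first identity is essentially a direct application of the $n=3$ case of Corollary \ref{l:cLmono} to the function $u=|\nabla b|^2$. Away from $x$, $b$ (and hence $|\nabla b|^2$) is smooth because $G$ is smooth and positive. Near $x$, the hypothesis gives $u\in C^2$, which is precisely the regularity needed so that $\int_{b\le r}\cL u\,G^2$ is finite (in dimension $3$, $G^2=b^{2(2-n)}=b^{-2}$, which is integrable against the volume element since the volume element scales like $r^2\,dr\,d\omega$ near $x$). Thus, writing $A(r)=I_{|\nabla b|^2}(r)$ and differentiating, we obtain
\begin{align*}
A'(r)=\int_{b\le r}\cL\bigl(|\nabla b|^2\bigr)\,G^2.
\end{align*}

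The formula in the corollary then follows from a one–line substitution. By Lemma \ref{l:cL},
\begin{align*}
\cL\bigl(|\nabla b|^2\bigr)=\frac{1}{2b^2}\left(\Bigl|\Hess_{b^2}-\frac{\Delta b^2}{n}g\Bigr|^2+\Ric(\nabla b^2,\nabla b^2)\right),
\end{align*}
and multiplying by $G^2=b^{-2}$ (with $n=3$) produces the weight $\tfrac{1}{2}b^{-4}$, giving the claimed expression for $A'(r)$. (Alternatively, the same identity can be read off from Theorem \ref{t:thirdmono} in the case $n=3$ by sending $r_1\to 0$ and using the $C^2$ regularity of $|\nabla b|^2$ at $x$ to conclude $A'(r_1)=I_u'(r_1)\to 0$.)

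For the rigidity statement, suppose in addition that $\Ric\ge 0$. Then both summands in the integrand of the formula just derived are nonnegative, so $A'(r)\ge 0$ for every $r>0$. On the other hand, by Corollary \ref{c:properties}, $A$ is monotone nonincreasing, so $A'(r)\le 0$. Combining these, $A'\equiv 0$, and the integrand must vanish identically on $M\setminus\{x\}$. Hence
\begin{align*}
\Hess_{b^2}=\frac{\Delta b^2}{n}\,g,\qquad \Ric(\nabla b^2,\nabla b^2)=0
\end{align*}
everywhere. Exactly as in the corollary following Theorem \ref{t:mainmono}, the first equation and Section 1 of \cite{ChC1} imply that $M$ is a metric cone with vertex at $x$ and $b$ the distance to the vertex; smoothness forces the cone to be flat, so $M$ is isometric to $\RR^3$.

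The only genuinely delicate point is the behavior at the pole $x$: one must justify that the integration-by-parts identity underlying Corollary \ref{l:cLmono} applies to $u=|\nabla b|^2$ under the $C^2$ assumption, i.e.\ that the boundary contribution at $x$ vanishes. This is the reason the $C^2$ regularity at $x$ has been inserted as a hypothesis, and it is a clean check using the Gilbarg–Serrin asymptotics recalled in the proof of Lemma \ref{l:ref}; everything else reduces to bookkeeping.
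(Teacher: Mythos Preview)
Your proof is correct and follows essentially the same route as the paper: the identity comes from the $n=3$ case of Corollary \ref{l:cLmono} applied to $u=|\nabla b|^2$ together with Lemma \ref{l:cL}, and the rigidity comes from combining the resulting nonnegativity of $A'$ with the nonincreasing property of $A$ from Corollary \ref{c:properties}, then invoking \cite{ChC1}. Your added remarks about the role of the $C^2$ hypothesis at the pole and the integrability of $G^2$ are helpful elaborations of what the paper leaves implicit.
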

         
         \begin{proof}
         The first claim follows directly from Corollary \ref{c:cLmono} and the second claim follows from the first together with Corollary \ref{c:properties}.  Namely, combining these it follows that $A$ is constant and hence
         \begin{align}
       \Hess_{b^2}&=\frac{\Delta b^2}{n}\,g\, ,\\
       \Ric (\nabla b^2,\nabla b^2)&=0\, .
        \end{align}
        From this it now follows from section 1 of \cite{ChC1} that $M$ is flat $\RR^3$.
         \end{proof}

     \begin{Cor}
     Let $M^4$ be a $4$-dimensional manifold.  If $|\nabla b|^2$ is $C^2$ in a neighborhood of $x$, then
     \begin{align}
     A'(r)= r\, \Vol (\partial B_1(0))\,\Delta |\nabla b|^2 (x)+\frac{r}{2}\, \int_{b\leq r}\left(\left|\Hess_{b^2}-\frac{\Delta b^2}{n}g\right|^2+\Ric (\nabla b^2,\nabla b^2)\right)\,b^{-6}\, .
     \end{align}
        \end{Cor}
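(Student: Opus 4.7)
The plan is a straightforward application of the $n=4$ case of Corollary \ref{l:cLmono} with the test function $u = |\nabla b|^2$, combined with the identity for $\cL\,|\nabla b|^2$ from Lemma \ref{l:cL}. The $C^2$ hypothesis near $x$ is exactly what is needed to invoke Corollary \ref{l:cLmono}, which requires $u$ to be smooth (at least $C^2$) on all of $M$, including at $x$, so that $\Delta u(x)$ and the limits
\begin{align}
\lim_{r\to 0} I_u'(r) = 0, \qquad \lim_{r\to 0} \frac{I_u'(r)}{r} = \Vol(B_1(0))\, \Delta u(x)
\end{align}
make sense.

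First I would observe that by definition
\begin{align}
A(r) = r^{1-n}\int_{b=r} |\nabla b|^3 = r^{1-n}\int_{b=r} |\nabla b|^2 \, |\nabla b| = I_{|\nabla b|^2}(r),
\end{align}
so that $A' = I_{|\nabla b|^2}'$. Next, specializing Corollary \ref{l:cLmono} to $n=4$ with $u = |\nabla b|^2$ yields
\begin{align}
A'(r) = r\, \Vol(\partial B_1(0))\, \Delta |\nabla b|^2(x) + r \int_{b\leq r} \cL\,|\nabla b|^2 \; G^2.
\end{align}

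The last step is to identify the integrand. By the first identity of Lemma \ref{l:cL},
\begin{align}
\cL\,|\nabla b|^2 = \frac{1}{2b^2}\left(\left|\Hess_{b^2} - \frac{\Delta b^2}{n}\,g\right|^2 + \Ric(\nabla b^2, \nabla b^2)\right),
\end{align}
and in dimension $n=4$ we have $G = b^{2-n} = b^{-2}$, so $G^2 = b^{-4}$. Multiplying, $\cL\,|\nabla b|^2 \cdot G^2$ equals $\tfrac{1}{2}\, b^{-6}$ times the integrand in the statement, and substituting gives the desired formula. There is no real obstacle: the work is already done in Corollary \ref{l:cLmono} and Lemma \ref{l:cL}, and the proof reduces to the dimensional bookkeeping $G^2 = b^{-4}$ combined with the $b^{-2}$ from $\cL\,|\nabla b|^2$.
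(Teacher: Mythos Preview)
Your proposal is correct and follows exactly the approach the paper intends: the paper presents this corollary immediately after Corollary~\ref{l:cLmono} as a direct consequence of it (together with Lemma~\ref{l:cL}), just as it does for the preceding $n=3$ case, and your write-up carries out precisely that substitution $u=|\nabla b|^2$, $G^2=b^{-4}$, $\cL|\nabla b|^2 = \tfrac{1}{2b^2}(\cdots)$.
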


       \section{Sharp gradient estimates for the Green's function}
       
       A natural question is wether the above monotonicity formulas are related to a sharp gradient estimate for the Green's function parallel to that Perelman's monotonicity formula for the Ricci flow is closely related to the sharp gradient estimate of Li-Yau, \cite{LY}, for the heat kernel.
       
       We will see next that the answer to this question is `yes':  
               
         \begin{Thm}  \label{t:sharpGreen}
        If $M^n$ has nonnegative Ricci curvature with $n\geq 3$, then 
        \begin{align}
        |\nabla b|\leq 1\, .
        \end{align}
        Moreover, if equality holds at any point on $M$, then $M$ is flat Euclidean space $\RR^n$.
        \end{Thm}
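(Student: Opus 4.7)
The plan is to combine the $\cL$-subharmonicity of every positive integer power of $|\nabla b|^2$ (via Lemma \ref{l:cL}) with Corollary \ref{c:cLmono} to bound all moments, then pass from averaged to pointwise bounds via an $L^k\to L^\infty$ argument. The strong maximum principle for $\cL$ will then give the rigidity.

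\emph{Moment bounds.} For each integer $k\ge 1$, set $u_k=|\nabla b|^{2k}$ and $f=|\nabla b|^2$. By Lemma \ref{l:cLsub1}, $u_k$ is bounded, and by the chain rule together with Lemma \ref{l:cL},
\begin{equation*}
\cL u_k=k\,f^{k-1}\cL f+k(k-1)\,f^{k-2}|\nabla f|^2\ge 0,
\end{equation*}
so $u_k$ is $\cL$-subharmonic. Corollary \ref{c:cLmono} then makes $I_{u_k}$ nonincreasing in $r$, while Lemma \ref{l:ref} (via $|\nabla b|\to 1$ at $x$) yields $\lim_{r\to 0}I_{u_k}(r)=\Vol(\partial B_1(0))$. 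Hence
\begin{equation*}
r^{1-n}\int_{b=r}|\nabla b|^{2k+1}\,d\HH^{n-1}\le \Vol(\partial B_1(0))\quad\text{for every }r>0,\,k\ge 1.
\end{equation*}

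\emph{Pointwise bound.} Fix $r>0$ and endow $\{b=r\}$ with the finite measure $d\mu_r=|\nabla b|\,d\HH^{n-1}$. Since $I_1$ is constant in $r$, $\mu_r(\{b=r\})=r^{n-1}\,\Vol(\partial B_1(0))$, so the moment bound rewrites as $\mu_r(\{b=r\})^{-1}\int_{b=r}(|\nabla b|^2)^k\,d\mu_r\le 1$ uniformly in $k$. Taking $k$th roots and sending $k\to\infty$, the standard $L^k\to L^\infty$ convergence on a finite measure space gives $\sup_{\{b=r\}}|\nabla b|^2\le 1$. Since $r$ was arbitrary and $b$ is proper, $|\nabla b|\le 1$ on $M\setminus\{x\}$, and Lemma \ref{l:ref} extends the bound continuously to $x$. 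For the rigidity claim, if $|\nabla b|(y_0)=1$ at some $y_0\in M\setminus\{x\}$, the $\cL$-subharmonic function $|\nabla b|^2$ attains an interior maximum on the smooth region $M\setminus\{x\}$, where $\cL$ has smooth coefficients, so the strong maximum principle forces $|\nabla b|^2\equiv 1$. By Lemma \ref{l:cL} this makes
\begin{equation*}
\Hess_{b^2}=\tfrac{\Delta b^2}{n}\,g=2\,g\quad\text{and}\quad \Ric(\nabla b^2,\nabla b^2)=0.
\end{equation*}
Exactly as in the corollary to Theorem \ref{t:mainmono}, Section 1 of \cite{ChC1} then identifies $M$ as a metric cone with $b$ the distance to the vertex, and smoothness forces $M=\RR^n$.

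\emph{Main obstacle.} The difficulty is conceptual rather than computational: the $\cL$-subharmonicity of $|\nabla b|^2$ alone (the case $k=1$) only yields the averaged inequality $A(r)\le \Vol(\partial B_1(0))$, which is strictly weaker than a pointwise bound. The key observation is that \emph{every} positive integer power of $|\nabla b|^2$ is simultaneously $\cL$-subharmonic with the same Euclidean limit at $x$, and it is only by playing all of these moment bounds against one another through $L^k\to L^\infty$ that one recovers the sharp pointwise estimate.
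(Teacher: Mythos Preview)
Your proof is correct, and the rigidity half is exactly the paper's argument. The route to the pointwise bound, however, is genuinely different from the paper's. The paper applies the maximum principle directly: it builds the barrier
\[
u=|\nabla b|^2-(1+\epsilon)-C^2\,\frac{b^{n-2}}{R^{n-2}},
\]
checks $\cL u\ge 0$ and $u\le 0$ on $\partial B_r\cup\{b=R\}$ (using Lemma \ref{l:ref} for the inner boundary and the a priori bound of Lemma \ref{l:cLsub1} for the outer one), and sends $\epsilon\to 0$, $R\to\infty$. An alternate proof in the paper runs the same barrier idea through the ordinary Laplacian using $\Delta(|\nabla b|^2G)\ge 0$. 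Your approach avoids constructing any barrier: you push the single Bochner inequality $\cL|\nabla b|^2\ge 0$ through the chain rule to $\cL|\nabla b|^{2k}\ge 0$ for every $k$, feed all of these into the integrated monotonicity of Corollary \ref{c:cLmono}, and then upgrade the uniform moment bound to a pointwise one via $L^k\to L^\infty$. The paper's barrier argument is shorter and, as a byproduct, immediately yields the stronger Theorem \ref{t:supnoninc} (that $\sup_{b=r}|\nabla b|^2$ is itself nonincreasing in $r$), which your method does not obviously give; on the other hand your argument shows that every moment $r^{1-n}\int_{b=r}|\nabla b|^{2k+1}$ is nonincreasing, a mild extension of Corollary \ref{c:properties}. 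One small technical point worth making explicit: your $L^\infty(\mu_r)$ conclusion is an essential-supremum bound with respect to the weighted measure $|\nabla b|\,d\HH^{n-1}$, but this is harmless since wherever $|\nabla b|^2>1$ the weight is positive, and continuity of $|\nabla b|^2$ then promotes the a.e.\ bound to an everywhere bound.
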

        
         \begin{proof}
         Given $\epsilon> 0$ by choosing $r> 0$ sufficiently small we have by \eqr{e:ref2} in Lemma \ref{l:ref} that
         \begin{align}
         \sup_{\partial B_r}|\nabla b|^2\leq 1+\epsilon\, .
         \end{align}
         Let $C$ be the gradient bound for $b$ given by Lemma \ref{l:cLsub1} and set 
         \begin{align}
         u=|\nabla b|^2-(1+\epsilon)-C^2\,\frac{b^{n-2}}{R^{n-2}}\, ,
         \end{align}
         then 
         \begin{align}
         \sup_{\partial B_r\cup \{b=R\}}\,u\leq 0\, .
         \end{align}
         From Lemma \ref{l:cL} we have that 
         \begin{align}
         \cL\,u\geq 0\, .
         \end{align}
         By the maximum principle for the operator $\cL$ applied to $u$ we have for $y\in M\setminus \{x\}$ fixed that
         \begin{align}
         |\nabla b|^2(y)\leq 1+\epsilon+C^2\,\frac{b^{n-2}(y)}{R^{n-2}}\, .
         \end{align}
         Letting $\epsilon\to 0$ and $R\to \infty$ gives the inequality.
         
         To prove that Euclidean space is characterized by that equality holds; suppose that at some point $p\in M$ we have that $|\nabla b|^2(p)=1$.   Since $|\nabla b|^2\leq 1$, 
         $\cL\,|\nabla b|^2\geq 0$, and $p$ is an interior point in $M\setminus \{x\}$ where the maximum of $|\nabla b|^2$ is achieved it follows from the maximum principle that $|\nabla b|^2\equiv 1$ everywhere and thus by \eqr{e:cL} 
         \begin{align}
       \Hess_{b^2}&=\frac{\Delta b^2}{n}\,g\, ,\\
       \Ric (\nabla b^2,\nabla b^2)&=0\, .
        \end{align}
        From this it follows from section 1 of \cite{ChC1} that $M$ is a metric cone and that $b$ is the distance to the vertex.  Since Euclidean space is the only smooth cone the claim follows.
         \end{proof}
         
         We next give a slightly different proof of Theorem \ref{t:sharpGreen} that instead of using the $\cL$ operator use that  $|\nabla b|^2\,G$ is subharmonic by \eqr{e:bsub}.
         
         \begin{proof}
         (Alternate proof of the sharp bound in Theorem \ref{t:sharpGreen}).   
         Given $\epsilon> 0$ by choosing $r> 0$ sufficiently small and $R$ sufficiently we have by \eqr{e:ref2} in Lemma \ref{l:ref} and since $G\to 0$ at infinity that
         \begin{align}
         \sup_{\partial B_r}|\nabla b|^2\leq 1+\epsilon\, ,
         \end{align}
         and
         \begin{align}
         \sup_{\partial B_R}G\leq \epsilon\, .
         \end{align}
         Let $C$ be the gradient bound for $b$ given by Lemma \ref{l:cLsub1} and set 
         \begin{align}
         u=|\nabla b|^2\,G-(1+\epsilon)\,G-C^2\,\epsilon\, ,
         \end{align}
         then 
         \begin{align}
         \sup_{\partial B_r\cup \{b=R\}}\,u\leq 0\, .
         \end{align}
         By \eqr{e:bsub} we have that 
         \begin{align}
         \Delta\,u\geq 0\, .
         \end{align}
         By the maximum principle for the Laplacian applied to $u$ we have for $y\in M\setminus \{x\}$ fixed that
         \begin{align}
         \left[|\nabla b|^2(y)-(1+\epsilon)\right]\,G(y)\leq C^2\,\epsilon\, .
         \end{align}
         Letting $\epsilon\to 0$ gives the inequality.
         \end{proof}

         The argument in the proof of Theorem \ref{t:sharpGreen} in fact gives that the $\sup_{b= r}|\nabla b|^2$ is monotone nonincreasing in $r$ or slightly more general:
         
         \begin{Thm}  \label{t:supnoninc}
         Let $\Omega$ be open bounded subset of $M$ containing $x$,  then for all $y\in M\setminus \overline{\Omega}$
         \begin{align}
         |\nabla b|^2(y)\leq \sup_{\partial\Omega}|\nabla b|^2\, .
        \end{align}
        Moreover, strict inequality holds unless $M$ is isometric to a cone outside a compact set.  
         \end{Thm}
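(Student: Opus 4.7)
The plan is to mirror the proof of Theorem \ref{t:sharpGreen}, which already established the case $\Omega = B_r(x)$ for small $r$ (where the boundary supremum is arbitrarily close to $1$). The same structural ingredients should apply: $|\nabla b|^2$ is $\cL$-subharmonic by Lemma \ref{l:cL}, the function $b^{n-2}$ is $\cL$-harmonic by Lemma \ref{l:cL}, and $|\nabla b|^2$ is globally bounded by some dimensional constant $C$ via Lemma \ref{l:cLsub1}. The only difference from Theorem \ref{t:sharpGreen} is that we replace the inner boundary $\partial B_r(x)$ by the more general $\partial \Omega$, and replace the limiting value $1$ on $\partial B_r$ by $S := \sup_{\partial \Omega} |\nabla b|^2$.

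Concretely, I would set, for $R$ large enough that $\overline{\Omega} \subset \{b < R\}$,
\[
u = |\nabla b|^2 - S - C^2 \, \frac{b^{n-2}}{R^{n-2}},
\]
and work on the bounded region $U_R = \{y \in M \setminus \overline{\Omega} : b(y) \leq R\}$. On $\partial \Omega$, the definition of $S$ gives $|\nabla b|^2 \leq S$, so $u \leq 0$; on $\{b = R\}$, the universal bound $|\nabla b|^2 \leq C^2$ together with $S \geq 0$ gives $u \leq C^2 - S - C^2 \leq 0$. Since $\cL u \geq 0$ on $U_R$, the maximum principle for $\cL$ yields $u \leq 0$ throughout $U_R$. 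Fixing $y$ and sending $R \to \infty$ kills the barrier term and yields $|\nabla b|^2(y) \leq S$, which is the desired inequality.

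For the rigidity statement, I would argue by the strong maximum principle: if equality $|\nabla b|^2(y_0) = S$ is attained at some interior point $y_0 \in M \setminus \overline{\Omega}$, then the function $|\nabla b|^2 - S$ is $\cL$-subharmonic, nonpositive outside $\Omega$ by what we just proved, and vanishes at the interior point $y_0$, forcing $|\nabla b|^2 \equiv S$ on the connected component of $M \setminus \overline{\Omega}$ containing $y_0$. Plugging this back into \eqref{e:cL} of Lemma \ref{l:cL} forces
\[
\Hess_{b^2} = \frac{\Delta b^2}{n}\, g, \qquad \Ric(\nabla b^2, \nabla b^2) = 0
\]
on that region, and then by the rigidity result from section 1 of \cite{ChC1} (invoked in the same way as in the proofs of Theorem \ref{t:sharpGreen} and Corollary \ref{t:thirdmono}) this region is a metric cone with vertex the natural limit of $b$, so $M$ is isometric to a cone outside a compact set.

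The main subtlety I anticipate is the non-compactness of the exterior region, which is exactly why the barrier $C^2 b^{n-2}/R^{n-2}$ is needed: without it, one cannot directly apply a maximum principle on an unbounded domain, since $|\nabla b|^2$ need not decay at infinity (indeed by Theorem \ref{t:asymptotic} it tends to a positive constant when $M$ has Euclidean volume growth). Using $\cL b^{n-2} = 0$ makes this barrier cost-free for the $\cL$-inequality while giving a free margin on $\{b = R\}$ that disappears in the limit. Aside from this, the argument is a direct adaptation of the Theorem \ref{t:sharpGreen} proof.
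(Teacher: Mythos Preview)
Your proposal is correct and is exactly the approach the paper intends: the paper does not give a separate proof of this theorem but simply remarks that ``the argument in the proof of Theorem~\ref{t:sharpGreen} in fact gives'' the result, and you have faithfully spelled out that adaptation (replacing $\partial B_r(x)$ by $\partial\Omega$ and the limiting value $1$ by $S=\sup_{\partial\Omega}|\nabla b|^2$, keeping the same barrier $C^2 b^{n-2}/R^{n-2}$ and the same strong-maximum-principle rigidity argument).
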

         
         Theorem \ref{t:supnoninc} should be compared with that $A$ is monotone nonincreasing by Corollary \ref{c:properties}.

         \vskip2mm
         In terms of $G$ this sharp gradient estimate is the following:
         
         \begin{Cor}
         If $M^n$ has nonnegative Ricci curvature with $n\geq 3$, then 
          \begin{align}
        |\nabla G|\leq (n-2)\,G^{\frac{n-1}{n-2}}\, .
        \end{align}
         \end{Cor}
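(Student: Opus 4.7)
The plan is to translate the sharp gradient estimate $|\nabla b|\leq 1$ from Theorem \ref{t:sharpGreen} directly into the language of $G$ by unwinding the definition $b=G^{\frac{1}{2-n}}$.

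First I would compute $\nabla G$ using the chain rule applied to $G=b^{2-n}$, obtaining
\begin{align}
\nabla G=(2-n)\,b^{1-n}\,\nabla b,
\end{align}
so that taking absolute values yields $|\nabla G|=(n-2)\,b^{1-n}\,|\nabla b|$.

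Next I would apply Theorem \ref{t:sharpGreen}, which gives $|\nabla b|\leq 1$ pointwise on $M\setminus\{x\}$ under the assumption of nonnegative Ricci curvature. This immediately reduces the estimate to
\begin{align}
|\nabla G|\leq (n-2)\,b^{1-n}.
\end{align}

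Finally, I would convert $b^{1-n}$ back into a power of $G$: from $b=G^{\frac{1}{2-n}}$ we get $b^{1-n}=G^{\frac{1-n}{2-n}}=G^{\frac{n-1}{n-2}}$, yielding the desired inequality. There is no real obstacle here; the statement is a direct corollary, and the only content is the one-line algebraic rewriting followed by a citation of Theorem \ref{t:sharpGreen}.
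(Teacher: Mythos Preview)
Your proposal is correct and essentially identical to the paper's proof: both unwind $b=G^{\frac{1}{2-n}}$ via the chain rule and invoke Theorem~\ref{t:sharpGreen}. The paper phrases the computation through logarithmic derivatives, writing $|\nabla\log G|=(n-2)\,|\nabla\log b|\le(n-2)\,G^{\frac{1}{n-2}}$, while you differentiate $G=b^{2-n}$ directly; this is a cosmetic difference only.
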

        
         \begin{proof}
          \begin{align}
        |\nabla \log G|= (n-2)\,|\nabla \log b|\leq (n-2)\, \frac{|\nabla b|}{b}\leq (n-2)\, G^{\frac{1}{n-2}}\, .
        \end{align}
         \end{proof}
         
         Another immediate corollary of the sharp gradient estimate for the Green's function is the following:
         
         \begin{Cor}
         If $M^n$ has nonnegative Ricci curvature with $n\geq 3$, then for all $r> 0$
          \begin{align}
        \Vol (b=r)&\geq \Vol (\partial B_r(0))\, ,\\
        \Vol (b\leq r)&\geq \int_0^r\Vol (b=s)\geq \Vol (B_r(0))\, .
        \end{align}
         \end{Cor}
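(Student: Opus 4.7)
The plan is to derive both inequalities directly from the sharp gradient estimate $|\nabla b| \leq 1$ of Theorem~\ref{t:sharpGreen}, together with the identity $I_1(r) \equiv \Vol(\partial B_1(0))$ recorded in Lemma~\ref{l:ref}.

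First I would prove the surface estimate. Recall that $I_1(r) = r^{1-n}\int_{b=r}|\nabla b|$ is constant in $r$, and by \eqref{e:ref3} this constant equals $\Vol(\partial B_1(0))$. Since $|\nabla b| \leq 1$ by Theorem~\ref{t:sharpGreen}, we bound
\begin{equation*}
\Vol(\partial B_1(0)) \, r^{n-1} \;=\; \int_{b=r}|\nabla b| \;\leq\; \int_{b=r} 1 \;=\; \Vol(b=r),
\end{equation*}
and the right-hand side equals $\Vol(\partial B_r(0))$ since $\Vol(\partial B_r(0)) = r^{n-1}\Vol(\partial B_1(0))$. This is the first inequality.

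Next I would apply the coarea formula to the nonnegative function $|\nabla b|$ on the sublevel set $\{b\leq r\}$:
\begin{equation*}
\int_{b\leq r} |\nabla b| \;=\; \int_0^r \Vol(b=s)\, ds.
\end{equation*}
Using $|\nabla b| \leq 1$ once more gives
\begin{equation*}
\Vol(b\leq r) \;=\; \int_{b\leq r} 1 \;\geq\; \int_{b\leq r} |\nabla b| \;=\; \int_0^r \Vol(b=s)\, ds,
\end{equation*}
which is the middle inequality. Finally, integrating the surface inequality just proved gives $\int_0^r \Vol(b=s)\, ds \geq \int_0^r \Vol(\partial B_s(0))\, ds = \Vol(B_r(0))$, completing the chain.

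There is no real obstacle here: the content is entirely packaged in Theorem~\ref{t:sharpGreen} and the constancy of $I_1$. The only mild subtlety is applying the coarea formula on a domain containing the singular point $x$ of $b$; but this is harmless since $\{x\}$ has measure zero and $|\nabla b|$ is bounded and smooth on $M\setminus\{x\}$, so the coarea formula applies on $\{0 < b \leq r\}$ and gives the same identity.
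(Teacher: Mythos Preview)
Your proposal is correct and follows essentially the same route as the paper: both arguments derive the surface inequality from $|\nabla b|\le 1$ together with the constancy $I_1(r)=\Vol(\partial B_1(0))$, and then obtain the volume chain from the coarea formula and one more use of $|\nabla b|\le 1$. The only cosmetic difference is that the paper applies coarea to the constant $1$ (writing $\Vol(b\le r)=\int_0^r\int_{b=s}|\nabla b|^{-1}$ and then using $|\nabla b|^{-1}\ge 1$), whereas you apply coarea to $|\nabla b|$ and bound $1\ge|\nabla b|$ beforehand; these are equivalent.
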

         
         \begin{proof}
         To see the first claim note that by the sharp gradient estimate 
          \begin{align}
       \Vol (b= r)\geq \int_{b\leq r}|\nabla b|=r^{n-1}\,\Vol (B_r(0))\, .
        \end{align}
        The second claim follows from the coarea formula and the sharp gradient estimate.   Namely, by those two we have that
        \begin{align}
       \Vol (b\leq r)= \int_{b\leq r}\frac{|\nabla b|}{|\nabla b|}=\int_0^r\int_{b=s}\frac{1}{|\nabla b|}\geq \int_0^r\Vol (b=s)\, .
        \end{align}
        Here the last inequality used the first claim.
         \end{proof}

                We show next a sharp asymptotic gradient estimate for the Green's function on manifolds with nonnegative Ricci curvature:

         \begin{Thm}  \label{t:asympsharpGreen}
        If $M^n$ has nonnegative Ricci curvature with $n\geq 3$, then 
        \begin{align}
        \lim_{r\to \infty} \sup_{M\setminus B_r(x)}|\nabla b|=\left(\frac{\V_M}{\Vol (B_1(0))}\right)^{\frac{1}{n-2}}\, .
        \end{align}
        \end{Thm}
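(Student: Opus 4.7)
The plan is to show the (monotone) limit
$L := \lim_{r\to\infty}\sup_{M\setminus B_r(x)}|\nabla b|$
exists and equals $\alpha := (\V_M/\Vol(B_1(0)))^{1/(n-2)}$. Existence is immediate from Theorem \ref{t:supnoninc}: applied to $\Omega = \{b < r\}$ (an open bounded set containing $x$ since $b \leq d(x,\cdot)$ by Lemma \ref{l:cLsub1}), it gives that $r\mapsto \sup_{\{b \geq r\}}|\nabla b|$ is monotone nonincreasing, and properness of $b$ then transfers this to the sup over $M\setminus B_r(x)$.

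For the lower bound $L\geq \alpha$ I would compare $A(r)$ to this supremum. Since $I_1\equiv \Vol(\partial B_1(0))$ by Lemma \ref{l:ref},
\begin{align*}
A(r) \;=\; r^{1-n}\int_{b=r}|\nabla b|^2\,|\nabla b| \;\leq\; \left(\sup_{b=r}|\nabla b|^2\right)\,\Vol(\partial B_1(0)),
\end{align*}
so sending $r\to\infty$ and invoking the asymptotic formula $A(r)\to \alpha^2\,\Vol(\partial B_1(0))$ from Theorem \ref{t:asymptotic} forces $\alpha^2 \leq L^2$.

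For the upper bound $L\leq \alpha$ I split on whether $\V_M=0$. When $\V_M=0$ the argument from the proof of Theorem \ref{t:asymptotic} goes through unchanged: the Li--Yau lower bound for $G$ combined with the Cheng--Yau gradient estimate yields $|\nabla b|(y)\leq C(n)\,(r_0^{-n}\Vol(B_{r_0}(x)))^{1/(n-2)}$ whenever $d(x,y)\geq r_0$, and $\V_M=0$ allows $r_0\to\infty$ to conclude $L=0=\alpha$. When $\V_M>0$ I would pick $r_i\to\infty$ and $y_i$ with $b(y_i)=r_i$ nearly realizing $\sup_{\{b\geq r_i\}}|\nabla b|^2$, and rescale the metric by $r_i^{-2}$; the point is that $|\nabla b|$ is scale invariant while $\tilde b:=r_i^{-1}b$ is the natural Green's function variable on the rescaled space. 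By Gromov compactness and the Cheeger--Colding almost-cone rigidity \cite{ChC1} (which applies because the almost constancy of $V$ at infinity, from Theorem \ref{t:asymptotic}, forces volume ratios of annuli to be almost constant), a subsequence converges in pointed Gromov--Hausdorff topology to a metric cone $C(Y)$ with $\V_{C(Y)}=\V_M$. A direct calculation on $C(Y)$ shows that the limiting $b$-function is $\alpha\rho$, with $\rho$ the distance to the vertex, so $|\nabla b_\infty|\equiv \alpha$; combined with the asymptotic analysis of the Green's function from \cite{CM2} this yields $|\nabla b|(y_i)\to\alpha$, hence $L\leq \alpha$.

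The main obstacle is the pointwise gradient convergence $|\nabla b|(y_i)\to \alpha$ at the chosen $y_i$: Cheeger--Colding convergence for harmonic functions provides only $C^0$ and $W^{1,2}$ control on $\tilde b$, which is not enough to read off a pointwise supremum. The key ingredient to close this gap is the combination of the sharp bound $|\nabla b|^2\leq 1$ (Theorem \ref{t:sharpGreen}) with the $\cL$-subharmonicity of $|\nabla b|^2$ (Lemma \ref{l:cL}): these upgrade the integral closeness of $|\nabla b|^2$ to $\alpha^2$ (which follows from $A(r)\to \alpha^2\Vol(\partial B_1(0))$ together with $|\nabla b|\leq 1$) into a pointwise upper bound via a mean-value-type inequality for $\cL$-subharmonic functions on small $\tilde g$-balls around $y_i$, completing the estimate $L\leq\alpha$.
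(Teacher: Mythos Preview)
Your outline is correct, and for $\V_M=0$ and the lower bound $L\geq\alpha$ it matches the paper. For the upper bound in the Euclidean-volume-growth case, however, the paper takes a shorter route that bypasses rescaling and Gromov--Hausdorff limits entirely. The paper observes (from \eqr{e:bsub}) that $G(L-|\nabla b|^2)$ is nonnegative and $\Delta$-superharmonic on $M\setminus B_r(x)$, and then applies a mean-value inequality for superharmonic functions (Lemma~\ref{l:ccm}, proved via Cheng--Yau Harnack and the Laplacian comparison) together with Harnack for $G$ to get, for every $y\in M\setminus B_{2r}(x)$,
\[
\frac{1}{\Vol(\partial B_r(y))}\int_{\partial B_r(y)}(L-|\nabla b|^2)\;\leq\;C\,(L-|\nabla b|^2)(y).
\]
This reduces the entire statement to showing that the spherical averages of $|\nabla b|^2$ over $\partial B_r(y)$, $y\in\partial B_{2r}(x)$, converge to $\alpha^2$, which is taken from \cite{CM2}. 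Your ``fix'' via $\cL$-subharmonicity of $|\nabla b|^2$ is the same mechanism in different clothing (both come from the same Bochner identity in Lemma~\ref{e:LaplE}); once you have that ingredient, the blow-up and cone-identification steps are an unnecessary detour. One small slip: your parenthetical that the needed integral closeness of $|\nabla b|^2$ to $\alpha^2$ on balls around $y_i$ ``follows from $A(r)\to\alpha^2\Vol(\partial B_1(0))$ together with $|\nabla b|\leq1$'' is not right as stated, since $A$ is a level-set integral over $\{b=r\}$ and does not by itself localize to balls around an arbitrary $y_i$; you really do need the input from \cite{CM2} (or the $W^{1,2}$ convergence coming out of your blow-up) for that step.
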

        
        To prove this theorem we will need the following lemma that was proven in \cite{ChCM}); see the proof of (\#) on page 952 of \cite{ChCM}).  (For completeness and since this was not explicitly  stated as a lemma there we will include the proof).
        
        \begin{Lem}  \label{l:ccm}
        ((\#) on page 952 of \cite{ChCM}).  
        Let $M^n$ be an open manifold with nonnegative Ricci curvature and let $u$ be a positive superharmonic function on $B_r(x)$.  Then there exists a constant $C=C(n)$ such that
        \begin{align}
        \frac{1}{\Vol (B_{r}(x))}\int_{\partial B_r(x)}u\leq C\,u(x)\, .
        \end{align}
        \end{Lem}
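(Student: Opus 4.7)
The idea is to combine the Laplacian comparison theorem for nonnegative Ricci curvature with integration by parts against a carefully chosen radial test function. The key input is the distributional inequality $\Delta d(x,\cdot) \leq (n-1)/d(x,\cdot)$ on $M \setminus \{x\}$, valid across the cut locus of $x$ by the standard support-function/smoothing argument.

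For the test function, I would take the shifted Euclidean fundamental solution $\phi(s) = s^{2-n} - r^{2-n}$ on $(0, r]$, which vanishes on $\partial B_r(x)$ and has the familiar $d^{2-n}$ singularity at $x$. Direct computation gives $\phi''(s) + (n-1)\phi'(s)/s = 0$ and $\phi'(s) < 0$. Since $\phi'(d) < 0$, the Laplacian comparison yields
\begin{equation*}
\Delta\phi(d) = \phi''(d) + \phi'(d)\,\Delta d \geq \phi''(d) + (n-1)\phi'(d)/d = 0
\end{equation*}
on $B_r(x)\setminus\{x\}$, so $\phi(d)$ is nonnegative and weakly subharmonic there. (The singular part of $\Delta\phi(d)$ concentrated on the cut locus also has the right sign, because $\phi$ is monotone decreasing, so $\phi\circ d$ picks up a \emph{max}-type kink rather than a \emph{min}-type one.)

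Next, I would apply Green's second identity to $u$ and $\phi(d)$ on the annulus $B_r(x)\setminus B_\epsilon(x)$ and let $\epsilon\to 0$. Since $u \geq 0$, $\Delta u \leq 0$, $\phi(d)\geq 0$, and $\Delta\phi(d)\geq 0$, the interior integral
\begin{equation*}
\int_{B_r(x) \setminus B_\epsilon(x)} \bigl(\phi(d)\,\Delta u - u\,\Delta\phi(d)\bigr)
\end{equation*}
is nonpositive. On the boundary side, $\partial B_r(x)$ contributes $(n-2)r^{1-n}\int_{\partial B_r(x)} u$ (using $\phi(r)=0$ and $\phi'(r)=(2-n)r^{1-n}$), while $\partial B_\epsilon(x)$ extracts $-(n-2)\,n\omega_n\,u(x) + o(1)$ as $\epsilon\to 0$, from the Euclidean-like asymptotics of $\phi(d)$ at $x$ (as in Lemma \ref{l:ref}). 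Rearranging yields the pointwise-in-$r$ bound
\begin{equation*}
\int_{\partial B_r(x)} u \, dA \leq n\omega_n\, r^{n-1}\, u(x),
\end{equation*}
and dividing by $\Vol(B_r(x))$ together with the Bishop-Gromov volume comparison (to control $n\omega_n r^{n-1}/\Vol(B_r(x))$ by a dimensional constant at the relevant scale) gives the claimed inequality.

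The main obstacle is the distributional handling of the Laplacian comparison across the cut locus of $x$ and the verification that the singular contribution to $\Delta\phi(d)$ has the correct sign; once that is settled, extracting the $\delta_x$-contribution at $x$ in the limit $\epsilon\to 0$ and tracking the precise dimensional constants is routine.
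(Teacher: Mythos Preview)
Your comparison argument correctly establishes
\[
\int_{\partial B_r(x)} u \;\leq\; \Vol(\partial B_r(0))\, u(x),
\]
with $B_r(0)\subset\RR^n$; this is the classical mean--value inequality for superharmonic functions under nonnegative Ricci, and your handling of the Laplacian comparison across the cut locus is fine. The gap is in the final step. Bishop--Gromov gives only the \emph{upper} bounds $\Vol(B_r(x))\leq \Vol(B_r(0))$ and $\Vol(\partial B_r(x))\leq \Vol(\partial B_r(0))$; there is no dimensional \emph{lower} bound on $\Vol(B_r(x))/r^n$ without a noncollapsing hypothesis. Consequently the ratio $\Vol(\partial B_r(0))/\Vol(B_r(x))$ --- or, with the normalisation actually used in the paper's proof, $\Vol(\partial B_r(0))/\Vol(\partial B_r(x))$ --- cannot be bounded by a constant depending only on $n$. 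In short, your inequality compares the spherical integral of $u$ with the \emph{Euclidean} model area, whereas the lemma requires a bound in terms of the \emph{intrinsic} area of $\partial B_r(x)$ in $M$, and these can differ by an arbitrarily large factor on a collapsed manifold.

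The paper's proof avoids this by never comparing with $\RR^n$. It replaces $u$ by the harmonic function $h_r$ on $B_r(x)$ with the same boundary values; the maximum principle gives $h_r(x)\le u(x)$, and the Cheng--Yau Harnack inequality then bounds $\sup_{B_{r/2}(x)} h_r$ by $C\,u(x)$ \emph{intrinsically}. The Laplacian comparison enters only as the monotonicity of $s^{1-n}\int_{\partial B_s} h_r$, which transfers the estimate from radius $r/2$ to radius $r$; the geometry then appears solely through the ratio $\Vol(\partial B_{r/2}(x))/\Vol(\partial B_r(x))$, controlled by volume comparison between two scales in $M$ rather than by comparison with the flat model. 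The Harnack inequality applied to the harmonic replacement is precisely the ingredient your argument is missing.
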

        
        \begin{proof}
        (The proof is taken from \cite{ChCM}).  
        Let $h_r$ be the harmonic function on $B_r(x)$ with $h_r|\partial B_r(x)=u|\partial B_r(x)$.  By the maximum principle $0<h_r\leq u$ so $0<h_r(x)\leq u(x)$.  By the Cheng-Yau Harnack inequality for some $C=C(n)$ 
        \begin{align}
        \sup_{B_{\frac{r}{2}}(x)} h_r\leq C\,\inf_{B_{\frac{r}{2}}(x)}h_r\leq C\,u(x)\, .
        \end{align}
        Moreover, by the Laplacian comparison theorem and since $h_r$ is harmonic and nonnegative
        \begin{align}
        \log \left(s^{1-n}\int_{\partial B_s(x)}h_r\right)\downarrow 
        \end{align}
        is monotone nonincreasing.    Combining this gives that
        \begin{align}
        r^{1-n}\int_{\partial B_{r}(x)}h_r\leq \left(\frac{r}{2}\right)^{1-n}\int_{\partial B_{\frac{r}{2}}(x)}h_r\leq C\,\left(\frac{r}{2}\right)^{1-n}\,\Vol (\partial B_{\frac{r}{2}}(x))\, u(x)\, .
        \end{align}
        Hence,
        \begin{align}
        \frac{1}{\Vol (\partial B_{r}(x))}\int_{\partial B_r(x)}u= \frac{1}{\Vol (\partial B_{r}(x))}\int_{\partial B_r(x)}h_r\leq 2^{n-1}\,C\,\frac{\Vol (\partial B_{\frac{r}{2}}(x))}{\Vol (\partial B_{r}(x))}\,u(x)
        \leq 2^{n}\,C\,n\,u(x)\, .
        \end{align}
        \end{proof}

           \begin{proof}
           (of Theorem \ref{t:asympsharpGreen}). 
           It follows from the proof of  Theorem \ref{t:asymptotic} that we only need to show the theorem when $M$ has Euclidean volume growth.
           
           Set 
        \begin{align}
        L=\sup_{M\setminus B_r(x)}|\nabla b|^2\, ,
        \end{align}
         and let $y\in M\setminus B_{2r}(x)$.
        It follows from the Cheng-Yau Harnack inequality for $G$ it follows that
        \begin{align}
        \sup_{B_{\frac{r}{2}}(y)} G\leq C \inf_{B_{\frac{r}{2}}(y)}G\, .
        \end{align}
        Combining this with Lemma \ref{l:ccm} applied to $G\,(L-|\nabla b|^2)$ since 
          \begin{align}
        \Delta\,G\,(L-|\nabla b|^2)=- \Delta\,G\,|\nabla b|^2\leq 0\, ,
        \end{align}
        we get that
        \begin{align}
        \frac{1}{C\,\Vol (\partial B_{r}(y))}\int_{\partial B_{r}(y)}(L-|\nabla b|^2) 
        \leq \frac{1}{\Vol (\partial B_{r}(y))}\int_{\partial B_{r}(y)}G\,(L-|\nabla b|^2) \leq C\,(L-|\nabla b|^2)(y) \, .
        \end{align}
        All we need to show is therefore that the average of $|\nabla b|^2$ on all balls of radius $r$ centered at $\partial B_{2r}(x)$ converges to 
         \begin{align}
        \left(\frac{\V_M}{\Vol (B_1(0))}\right)^{\frac{2}{n-2}}
        \end{align}
        as $r\to \infty$.   This however follows from the Bishop-Gromov volume comparison theorem together with (3.38) on page 1374 of \cite{CM2}; cf. also with the proof of Theorem \ref{t:asymptotic}.  
         \end{proof}   
         
         For the Green's function itself this sharp asymptotic gradient estimate is:
         
         \begin{Cor}  \label{c:asympsharpGreen}
        If $M^n$ has nonnegative Ricci curvature with $n\geq 3$, then 
        \begin{align}
        (n-2)\lim_{r\to \infty} \sup_{M\setminus B_r(x)}\frac{|\nabla G|}{G^{\frac{n-1}{n-2}}}=\left(\frac{\V_M}{\Vol (B_1(0))}\right)^{\frac{1}{n-2}}\, .
        \end{align}
        \end{Cor}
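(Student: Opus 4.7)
The plan is to translate the sharp asymptotic gradient estimate for $b$ from Theorem \ref{t:asympsharpGreen} directly into an estimate for $G$ using the defining relation $b=G^{1/(2-n)}$. Since the computation is purely algebraic, no new geometric input is needed beyond Theorem \ref{t:asympsharpGreen}; the only work is bookkeeping of exponents.

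First I would derive the pointwise identity relating the two gradient quantities. From $G=b^{2-n}$ we get
\begin{align}
\nabla G=(2-n)\,b^{1-n}\,\nabla b, \qquad |\nabla G|=(n-2)\,b^{1-n}\,|\nabla b|.
\end{align}
On the other hand, $G^{(n-1)/(n-2)}=b^{(2-n)(n-1)/(n-2)}=b^{-(n-1)}=b^{1-n}$, so the powers of $b$ cancel and we arrive at the exact pointwise identity
\begin{align}
\frac{|\nabla G|}{G^{(n-1)/(n-2)}}=(n-2)\,|\nabla b|.
\end{align}
This is the same identity that underlies the earlier (non-asymptotic) corollary $|\nabla G|\leq (n-2)\,G^{(n-1)/(n-2)}$, which followed by combining this identity with Theorem \ref{t:sharpGreen}'s bound $|\nabla b|\leq 1$.

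Second, I would take the supremum of both sides over $M\setminus B_r(x)$ and let $r\to\infty$. By Theorem \ref{t:asympsharpGreen},
\begin{align}
\lim_{r\to\infty}\sup_{M\setminus B_r(x)}|\nabla b|=\left(\frac{\V_M}{\Vol(B_1(0))}\right)^{1/(n-2)},
\end{align}
and substituting into the algebraic identity above yields the desired asymptotic formula. There is no genuine obstacle: the corollary is a formal rewriting of Theorem \ref{t:asympsharpGreen}, and the only thing requiring a bit of care is to track the exponents (noting that $b\to\infty$ while $G\to 0$ at infinity) and match the constant prefactor stated in the corollary.
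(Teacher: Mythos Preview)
Your approach is correct and is precisely how the paper intends the corollary to follow: the paper gives no separate proof, presenting the corollary simply as the restatement of Theorem~\ref{t:asympsharpGreen} in terms of $G$, via the same pointwise identity $|\nabla G|/G^{(n-1)/(n-2)}=(n-2)\,|\nabla b|$ that was used in the proof of the earlier corollary $|\nabla G|\le (n-2)\,G^{(n-1)/(n-2)}$.

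One small remark on the bookkeeping you mention at the end: your own identity shows that
\[
\lim_{r\to\infty}\sup_{M\setminus B_r(x)}\frac{|\nabla G|}{G^{(n-1)/(n-2)}}
=(n-2)\left(\frac{\V_M}{\Vol(B_1(0))}\right)^{1/(n-2)},
\]
so the factor $(n-2)$ in the corollary as printed sits on the wrong side (it should divide the left-hand side, or equivalently multiply the right). This is a typographical slip in the paper, not a flaw in your argument.
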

        
        Even on an open manifold with Euclidean volume growth and nonnegative Ricci curvature, where by the above theorem $\sup_{M\setminus B_r(x)} |\nabla b|$ converges to its nonzero average, $\nabla b$ may vanishes arbitrarily far out.  Indeed, X. Menguy, \cite{M1}, has given examples of such manifolds with infinite topological type and thus $\nabla b$ in each of those examples vanish arbitrarily far out; cf also with \cite{P3}.  For Ricci-flat manifolds with Euclidean volume growth the corresponding question is unknown; without the assumption of Euclidean volume growth there are examples of Anderson-Kronheimer-LeBrun, \cite{AKL}, of Ricci-flat manifolds with infinite topological type so in those examples $\nabla b$ vanishes arbitrarily far out too.
          
               \section{Distance to the space of cones and uniqueness}
       
       In this section we will relate the derivative of the monotone quantities from the previous section to the distance to the nearest cone.  Using this we get a uniqueness criteria for tangent cones.

\subsection{Distance to the space of cones and a uniqueness criteria}  Recall that a metric cone $C(Y)$ over a metric space $(Y,d_Y)$ is the metric completion of the set $(0,\infty)\times Y$ with the metric
\begin{align}
d^2_{C(Y)}((r_1,y_1),(r_2,y_2))=r_1^2+r_2^2-2\,r_1\,r_2\,\cos d_Y(y_1,y_2)\, ;
\end{align}
see also section 1 of \cite{ChC1}.  When $Y$ itself is a complete metric space taking the completion of $(0,\infty)\times Y$  adds only one point to the space.  This one point is usually referred to as the vertex of the cone.  We will also sometimes write $(0,\infty)\times_r Y$ for the metric cone.

\vskip2mm
We will next define a scale invariant notion that measure how far the metric space on a given scale is from a cone.  This is the following:

\begin{Def}
(Scale invariant distance to the space of cones.)  
Suppose that $(X,d_X)$ is a metric space and $B_r(x)$ is a ball in $X$.  Let $\Theta_r(x)>0$ be the infimum of all $\Theta>0$ such that 
\begin{equation}
d_{GH}(B_r(x),B_r(v))<\Theta\,r\, ,
\end{equation}
where $B_r(v)\subset C(Y)$ and $v$ is the vertex of the cone.
\end{Def}

For the discussion that follows it is useful to keep the following example in mind:

\begin{Exa}
(Koch curve.)  Let $K_1$ be the union of two line segments of length $1$ meeting at an angle of almost $\pi$.  Replace each of the two line segments by a scaled down copy of $K_1$ to get $K_2$.  Repeat this process and denote the i'th curve by $K_i$.  The Koch curve is the limit as $i\to \infty$.

The Koch curve is an example of a set that is not bi-Lipschitz to a line yet for all $r>0$ it satisfies
\begin{equation}
\Theta_r<\epsilon\, ,
\end{equation}
with $\epsilon\to 0$ as the angle in $K_1$ tend to $\pi$.  
Tangent cones for the Koch curve are not unique.    The Koch curve is obviously a metric space with the metric induced from $\RR^2$ however it is not a length space as it is a curve with Hausdorff dimension $>1$.
\end{Exa}

\subsection{Criteria for uniqueness} 

We have that the following integrability that implies uniqueness:

\begin{Thm}   \label{t:criteria1a} 
If $\alpha>1$ and 
\begin{equation}
\int_1^{\infty} \frac{\Theta_r^2}{r\,|\log r|^{-\alpha}}\,dr<\infty\, ,
\end{equation}
then the tangent cone at infinity is unique.  Likewise for tangent cones at a point.
\end{Thm}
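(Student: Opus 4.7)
The plan is to reduce the weighted hypothesis to a Dini-type integrability of $\Theta_r$ by Cauchy--Schwarz, and then to run a Cauchy-sequence argument on pointed rescalings in the Gromov--Hausdorff topology to obtain uniqueness.

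First I would apply Cauchy--Schwarz to the integral in the hypothesis. Inserting the factors $|\log r|^{\alpha/2}$ and $|\log r|^{-\alpha/2}$ gives
\begin{align*}
\int_e^R \frac{\Theta_r}{r}\,dr \;\leq\; \left(\int_e^R \Theta_r^2\,|\log r|^{\alpha}\, \frac{dr}{r}\right)^{1/2}\left(\int_e^R |\log r|^{-\alpha}\,\frac{dr}{r}\right)^{1/2}.
\end{align*}
The change of variable $u=\log r$ turns the second factor into $\int_1^{\log R} u^{-\alpha}\,du$, which is bounded uniformly in $R$ precisely because $\alpha>1$. Combined with the assumed finiteness of the first factor, and with the trivial boundedness of $\Theta_r$ near $r=1$, this yields the Dini-type condition
\begin{align*}
\int_1^\infty \frac{\Theta_r}{r}\,dr \;<\; \infty.
\end{align*}

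Second, I would use this Dini integrability to show that the one-parameter family of pointed rescalings $(M, r^{-2}g, x)$ is Cauchy in the pointed Gromov--Hausdorff topology as $r\to\infty$. At each scale $r$, the definition of $\Theta_r$ supplies a metric cone $C_r$ with vertex $v_r$ and an approximation $B_r(x)\approx B_r(v_r)\subset C_r$ within GH-error $\Theta_r\, r$. For consecutive dyadic scales $r$ and $2r$, both $B_{2r}(x)$ and the ball $B_{2r}(v_r)\subset C_r$ (which contains the scaled copy $B_r(v_r)$) are close to cones, so the triangle inequality forces the nearest cones at scales $r$ and $2r$ to be GH-close on their common unit-size ball, with error $\lesssim \Theta_r+\Theta_{2r}$. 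Because the spaces in question are truncated metric cones, closeness at the vertex upgrades to closeness of the cross-sections, so the cones themselves are uniformly close. Summing over a dyadic decomposition of $[r_1,r_2]$ and comparing the sum to a Riemann sum for $\Theta_r/r$ yields an estimate of the form
\begin{align*}
d_{GH}\bigl((M, r_1^{-2}g, x),\,(M, r_2^{-2}g, x)\bigr) \;\lesssim\; \int_{r_1}^{r_2} \frac{\Theta_r}{r}\,dr,
\end{align*}
which tends to $0$ as $r_1,r_2\to\infty$ by the Dini condition. Gromov's precompactness theorem together with this Cauchy property delivers a unique pointed GH-limit, i.e.\ uniqueness of the tangent cone at infinity; the argument for tangent cones at a point is identical with scales sent to zero in place of infinity.

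The hardest step is the cone-to-cone comparison in the second paragraph: quantifying how much the nearest cone at scale $r$ can differ from the nearest cone at scale $2r$. The content is that two pointed metric cones that are GH-close on one ball about their vertices must have GH-close cross-sections, hence are close on all scales; this rigidity is a standard consequence of the structural results of \cite{ChC1}, and is the mechanism by which Dini integrability of $\Theta_r/r$ is upgraded to uniqueness. Everything else is either Cauchy--Schwarz or a packaging of Gromov compactness.
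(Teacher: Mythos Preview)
Your proof is correct and the first step matches the paper's exactly (you do the continuous Cauchy--Schwarz, the paper does the discrete sum over $r_k=\e^k$, but these are equivalent). The second step, however, takes a slight detour compared to the paper's argument.

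You identify as the ``hardest step'' a cone-to-cone comparison: the cone $C_r$ nearest at scale $r$ versus the cone $C_{2r}$ nearest at scale $2r$. The paper bypasses this entirely. Instead of using two cones, it uses only the cone $C_{\e r}$ at the \emph{larger} scale and exploits the scale-invariance of cones. Concretely: $B_{\e r}(x)$ is within GH-distance $\Theta_{\e r}\cdot \e r$ of $B_{\e r}(v_{\e r})$, and restricting gives $B_r(x)$ within the same distance of $B_r(v_{\e r})$. After rescaling, both $\tfrac{1}{r}B_r(v_{\e r})$ and $\tfrac{1}{\e r}B_{\e r}(v_{\e r})$ are the \emph{same} unit ball in $C_{\e r}$, so the triangle inequality gives
\[
d_{GH}\Bigl(\tfrac{1}{r}B_r(x),\,\tfrac{1}{\e r}B_{\e r}(x)\Bigr)\leq (\e+1)\,\Theta_{\e r}
\]
with no appeal to rigidity of cones or to \cite{ChC1}. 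Your route through cross-section comparison is valid, but it is an avoidable lemma; the one-cone bridge is both shorter and more elementary.
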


\begin{proof}
By the Cauchy-Schwarz inequality and some elementary inequalities for $R$ sufficiently large
\begin{equation}
\left(\sum_k\Theta_{\e^{-k}}\right)^2\leq \sum_k\Theta_{\e^{-k}}^2\,k^{\alpha}\,\sum_k k^{-\alpha}\leq \int_R^{\infty} \frac{\Theta_r^2}{r\,|\log r|^{-\alpha}}\,dr\,\sum_k k^{-\alpha}<C^2\,\epsilon^2\, .
\end{equation}
Hence,  by the triangle inequality
\begin{align}
d_{GH}\left(\frac{1}{r} B_r(x),\frac{1}{\e r}B_{\frac{1}{\e r}}(x)\right)&\leq \frac{1}{r}d_{GH}(B_r(x),B_r(v_{\e r}))+\frac{1}{\e r} d_{GH}(B_{\e r}(x),B_{\e r}(v_{\e r}))\\&\leq \e\, \Theta_{\e r}+\Theta_{\e r}=(\e +1)\,\Theta_{\e r}\, .\notag
\end{align}
Therefore if we set $r_k=\e^k$, then
\begin{align}
d_{GH}\left(\frac{1}{r_m}B_{r_m}(x),\frac{1}{r_{1}}B_{r_{1}}(x)\right)&\leq \sum_{k=1}^{m-1}d_{GH}\left(\frac{1}{r_k}B_{r_k}(x),\frac{1}{r_{k+1}}B_{r_{k+1}}(x)\right)\\
&\leq (1+\e)\,\sum_k\Theta_{r_{k+1}}\leq (1+\e)\,C\,\epsilon\, .\notag
\end{align}
\end{proof}

Another closely related criterium for uniqueness is the following: 

\begin{Thm}   \label{t:criteria1} 
 If $F$ is a nonnegative function on $[1,\infty)$ with $-F'\geq F^{1+\alpha}$ for some $\alpha> 0$ and $-C\,F'( s)\geq \Theta_{\e^s}^{2+2\epsilon}$ for some constant $C$ where 
 $\frac{1}{\alpha}-1>2\epsilon \geq 0$, 
then the tangent cone at infinity is unique.  Likewise for tangent cones at a point.
\end{Thm}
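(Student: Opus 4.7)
The plan is to adapt the Cauchy-sequence argument of Theorem~\ref{t:criteria1a}: derive from the two hypotheses that a well-chosen sequence of rescaled balls $r_k^{-1}B_{r_k}(x)$ is Cauchy in Gromov-Hausdorff distance, and then promote this to uniqueness of the tangent cone.

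First I would solve the ODE inequality: $-F'\geq F^{1+\alpha}$ gives $(F^{-\alpha})'\geq \alpha$, hence $F(s)\leq (\alpha s)^{-1/\alpha}$ for $s$ large. Combining this with $\Theta_{\e^s}^{2+2\epsilon}\leq -CF'(s)$, I would bound $\int_1^\infty \Theta_{\e^u}\,du$ via H\"older with exponents $p=2+2\epsilon$ and $q=p/(p-1)$, inserting the weight $u^{a}u^{-a}$:
\[
\int_1^\infty \Theta_{\e^u}\,du\leq \left(\int_1^\infty\Theta_{\e^u}^{p}\,u^{ap}\,du\right)^{1/p}\left(\int_1^\infty u^{-aq}\,du\right)^{1/q}.
\]
The second factor is finite when $aq>1$, i.e.\ $a>(p-1)/p$. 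For the first, substituting $\Theta^{p}\leq -CF'$ and integrating by parts against $u^{ap}$, the decay $F(u)\leq Cu^{-1/\alpha}$ makes the result finite provided $ap<1/\alpha$. Such an exponent $a$ exists precisely when $(p-1)/p<1/(\alpha p)$, equivalently $1/\alpha-1>2\epsilon$, which is the hypothesis.

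With $\int_1^\infty \Theta_{\e^u}\,du<\infty$ in hand, I extract $u_k\in[k,k+1]$ with $\Theta_{\e^{u_k}}\leq\int_{k}^{k+1}\Theta_{\e^u}\,du$ by the mean-value pigeonhole, so that $r_k=\e^{u_k}$ is monotone increasing with $r_{k+1}/r_k\in[1,\e^2]$ and $\sum_k\Theta_{r_k}<\infty$. The triangle-inequality step from Theorem~\ref{t:criteria1a} then gives
\[
d_{GH}\bigl(r_k^{-1}B_{r_k}(x),\,r_{k+1}^{-1}B_{r_{k+1}}(x)\bigr)\leq(\e^2+1)\,\Theta_{r_{k+1}},
\]
whose summability makes $\{r_k^{-1}B_{r_k}(x)\}$ Cauchy in GH distance, converging to some metric cone $C$ (cone by Cheeger-Colding). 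For any other sequence $R_j\to\infty$, pick $k_j$ with $R_j\in[r_{k_j},r_{k_j+1}]$; the same inequality yields $d_{GH}(R_j^{-1}B_{R_j},\,r_{k_j}^{-1}B_{r_{k_j}})\leq(\e^2+1)\Theta_{R_j}$, and since every subsequential GH-limit of $R_j^{-1}B_{R_j}$ is a cone by Cheeger-Colding, $\Theta_{R_j}\to 0$ along every subsequence. Hence $R_j^{-1}B_{R_j}\to C$, showing $C$ is the unique tangent cone; the case of tangent cones at a point is identical with $r\to 0$.

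The main obstacle is the H\"older balancing in the second step, where the interplay between the polynomial decay rate $1/\alpha$ of $F$ and the integrability exponent $2+2\epsilon$ of $\Theta$ is delicate. The sharpness of the hypothesis $1/\alpha-1>2\epsilon$ is exactly what guarantees the existence of an admissible weight exponent $a$ producing the finiteness of both factors simultaneously; the secondary technical point is the passage from integrability of $\Theta_{\e^u}$ in $u$ to summability along a carefully selected subsequence, which the pigeonhole step accomplishes at essentially no cost.
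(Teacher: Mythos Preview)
Your proof is correct and follows the same route as the paper: the ODE integration giving $F(s)\lesssim s^{-1/\alpha}$, the weighted H\"older inequality yielding $\int_1^\infty\Theta_{\e^s}\,ds<\infty$, and the triangle-inequality chain are exactly Lemma~\ref{l:intfinite}, its Corollary, and the reference back to Theorem~\ref{t:criteria1a}; your version is in fact more careful, making the H\"older exponents explicit (the paper writes ``Cauchy--Schwarz'', literally correct only when $\epsilon=0$) and inserting the pigeonhole extraction of $u_k\in[k,k+1]$ that the paper suppresses. One simplification in your last step: invoking Cheeger--Colding to force $\Theta_{R_j}\to 0$ is unnecessary---comparing $R_j$ to $r_{k_j+1}$ (rather than $r_{k_j}$) via the cone at the \emph{larger} scale gives $d_{GH}\bigl(R_j^{-1}B_{R_j},\,r_{k_j+1}^{-1}B_{r_{k_j+1}}\bigr)\leq(\e^2+1)\,\Theta_{r_{k_j+1}}\to 0$ directly from the summability of $\Theta_{r_k}$, so no appeal to the cone structure of tangent cones is needed.
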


This theorem will be an immediate consequence of the next lemma, its corollary, and the triangle inequality; where the triangle inequality 
is applied as in the proof of Theorem \ref{t:criteria1a}.
  
  \begin{Lem}   \label{l:intfinite}
  If $F$ is a nonnegative function on $[1,\infty)$ with $-F'\geq F^{1+\alpha}$ for some $\alpha> 0$, 
  then for $\frac{1}{\alpha}-1>\beta\geq 0$
  \begin{align}
  \int_0^{\infty}F'\,r^{1+\beta}>-\infty\, .
  \end{align}
\end{Lem}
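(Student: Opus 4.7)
The plan is to extract a polynomial decay rate for $F$ from the differential inequality $-F'\geq F^{1+\alpha}$, and then use that decay to control the boundary term after an integration by parts.

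First, I would rewrite the hypothesis in a more usable form. Setting $G = F^{-\alpha}$ (assuming $F>0$; the case $F\equiv 0$ on some tail is trivial, and $F$ is nonnegative with $F'\leq 0$ so we can handle the zero set separately), we have $G' = -\alpha F^{-\alpha-1}F' \geq \alpha$. Integrating from $1$ to $r$ gives $F(r)^{-\alpha} = G(r) \geq G(1) + \alpha(r-1) \geq \alpha(r-1)$, so
\begin{equation}
F(r) \leq [\alpha(r-1)]^{-1/\alpha} \qquad \text{for } r>1.
\end{equation}
In particular $F(r) \leq C r^{-1/\alpha}$ for all $r\geq 2$, where $C$ depends only on $\alpha$.

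Next, assuming the integral in the statement is $\int_1^\infty F'(r)\, r^{1+\beta}\, dr$ (taking the convention that $F$ is defined on $[1,\infty)$), I would integrate by parts on $[1,R]$ to get
\begin{equation}
\int_1^R (-F'(r))\, r^{1+\beta}\, dr = F(1) - F(R)\, R^{1+\beta} + (1+\beta)\int_1^R F(r)\, r^\beta\, dr.
\end{equation}
The boundary term at $R$ is controlled by the decay estimate: $F(R)\,R^{1+\beta} \leq C\, R^{1+\beta - 1/\alpha}$, which tends to $0$ as $R\to\infty$ because $\beta < \tfrac{1}{\alpha}-1$. The remaining integral is bounded using the same decay: $\int_1^R F(r)\,r^\beta\, dr \leq C\int_1^R r^{\beta - 1/\alpha}\, dr$, and the exponent $\beta - \tfrac{1}{\alpha} < -1$ makes this integral converge as $R\to\infty$. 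Combining these shows $\int_1^\infty (-F'(r))\, r^{1+\beta}\, dr < \infty$, which is the desired bound.

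The main (and essentially only) step is extracting the decay rate from the ODE inequality; once one has $F(r) \lesssim r^{-1/\alpha}$, the strict inequality $\beta < \tfrac{1}{\alpha}-1$ is exactly what is needed to make both the boundary term and the converted integral finite. A minor technical point is ensuring that $F$ is absolutely continuous (or at least that the integration by parts is justified). Since $F$ appears as the monotonicity quantity $2(n-1)V-A$ (or a similar monotone quantity) in the intended application, this should be automatic, but for the abstract lemma one should assume $F$ is, say, absolutely continuous or monotone so that $\int F'$ makes sense; in fact the monotonicity $F'\leq 0$ already follows from $-F'\geq F^{1+\alpha}\geq 0$.
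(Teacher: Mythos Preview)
Your proof is correct and shares the same opening move as the paper: from $-F'\geq F^{1+\alpha}$ you both deduce $(F^{-\alpha})'\geq\alpha$ and hence the decay $F(r)\lesssim r^{-1/\alpha}$. The two arguments diverge after that. You integrate by parts, reducing the question to controlling the boundary term $F(R)R^{1+\beta}$ and the integral $\int F(r)r^{\beta}\,dr$, both of which the decay estimate handles under the constraint $\beta<\tfrac{1}{\alpha}-1$. The paper instead uses a dyadic decomposition: it splits $[1,\infty)$ into intervals $[2^j,2^{j+1}]$, bounds $-\int_{2^j}^{2^{j+1}}F'\,r^{1+\beta}$ by $2^{(j+1)(1+\beta)}\bigl(F(2^j)-F(2^{j+1})\bigr)\leq 2^{(j+1)(1+\beta)}F(2^j)$, and sums the resulting geometric series. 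The dyadic route avoids any integration-by-parts regularity discussion and never needs the boundary term at infinity, while your approach is slightly cleaner once one accepts absolute continuity (which is harmless here since the hypothesis already involves $F'$). Both lead to the same conclusion for the same reason.
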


\begin{proof}
From the assumption it follows that
\begin{align}
\left(F^{-\alpha}\right)'\geq \alpha\, .
\end{align}
Hence, for $0\leq s\leq t$
\begin{align}
F^{-\alpha}(t)-F^{-\alpha}(s)\geq \alpha\, (t-s)\, .
\end{align}
Therefore, 
\begin{align}
F(t)\leq \left(\alpha\, (t-s)+F^{-\alpha}(s)\right)^{-\frac{1}{\alpha}}\leq \left(\alpha\, (t-s)\right)^{-\frac{1}{\alpha}}\, .
\end{align}

We can now bound the integral as follows
\begin{align}
-\int_1^{\infty}F'\, r^{1+\beta}&=-\sum_j\int_{2^j}^{2^{j+1}}F'\, r^{1+\beta}\leq -\sum_j2^{(j+1)(1+\beta)}\int_{2^j}^{2^{j+1}}F'\notag\\
&\leq \sum_j2^{(j+1)(1+\beta)}\left(F(2^j)-F(2^{j+1})\right)\leq \sum_j2^{(j+1)(1+\beta)}F(2^j)\\
&\leq \sum_j2^{(j+1)(1+\beta)}\left(\alpha 2^j\right)^{-\frac{1}{\alpha}}\leq \alpha^{-\frac{1}{\alpha}}\,2^{1+\beta}\,\sum_j2^{j(1+\beta-\frac{1}{\alpha})}\, .\notag
\end{align}
The claim follows since this sum is finite when  $\frac{1}{\alpha}-1>\beta$.
 \end{proof}
 
 \begin{Cor}
If $F$ and $\beta$ are as in Lemma \ref{l:intfinite} and $\Theta$ is a nonnegative function on $[1,\infty)$ with $-C\,F'(s)\geq \Theta_{\e^s}^{2+2\epsilon}$ for some constant $C$ where $\beta\geq 2\epsilon\geq 0$, 
then 
  \begin{align}
  \int_{\e}^{\infty}\frac{\Theta_{r}}{r}\, dr =\int_1^{\infty}\Theta_{\e^s}\, ds <\infty\, .
  \end{align}
 \end{Cor}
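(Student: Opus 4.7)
The plan is a single application of Hölder's inequality that splits $\Theta_{\e^s}$ into a factor controlled by $-F'(s)\,s^{1+\beta}$ (integrable by Lemma \ref{l:intfinite}) times a purely decaying weight.

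First, I would rewrite the hypothesis as $\Theta_{\e^s}^{2+2\epsilon}\leq -C\,F'(s)$. Multiplying by $s^{1+\beta}$ and invoking Lemma \ref{l:intfinite} for the given $\beta$ (which by assumption lies in $[2\epsilon,\,1/\alpha-1)$) yields
\begin{align*}
\int_1^{\infty}\Theta_{\e^s}^{2+2\epsilon}\,s^{1+\beta}\,ds \;\leq\; -C\int_1^{\infty}F'(s)\,s^{1+\beta}\,ds \;<\;\infty.
\end{align*}

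Second, set $p=2+2\epsilon$ and $q=p/(p-1)=(2+2\epsilon)/(1+2\epsilon)$, and factor
\begin{align*}
\Theta_{\e^s}=\bigl(\Theta_{\e^s}\,s^{(1+\beta)/p}\bigr)\cdot s^{-(1+\beta)/p}.
\end{align*}
Hölder's inequality then gives
\begin{align*}
\int_1^{\infty}\Theta_{\e^s}\,ds \;\leq\;\left(\int_1^{\infty}\Theta_{\e^s}^{p}\,s^{1+\beta}\,ds\right)^{1/p}\left(\int_1^{\infty}s^{-(1+\beta)q/p}\,ds\right)^{1/q}.
\end{align*}
The first factor is finite by the previous step; the second is finite precisely when $(1+\beta)q/p>1$, i.e.\ $\beta>p-2=2\epsilon$. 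The conclusion then follows by the change of variable $r=\e^s$, which identifies $\int_1^{\infty}\Theta_{\e^s}\,ds$ with $\int_{\e}^{\infty}\Theta_r/r\,dr$.

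The main subtlety is the borderline case $\beta=2\epsilon$, for which the second integral above is exactly $\int_1^{\infty}s^{-1}\,ds=\infty$. I resolve this by observing that Lemma \ref{l:intfinite} is valid for \emph{every} exponent in the open range $[0,\,1/\alpha-1)$; since the hypothesis of Theorem \ref{t:criteria1} assumes $1/\alpha-1>2\epsilon$ strictly, I am free to replace $\beta$ by any $\beta'\in(2\epsilon,\,1/\alpha-1)$ before applying the Hölder argument. In the setting of Theorem \ref{t:criteria1} this substitution costs nothing, which is precisely why the strict inequality $1/\alpha-1>2\epsilon$ is imposed there.
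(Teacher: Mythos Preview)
Your argument is correct and follows the same two–step route as the paper: first bound a weighted integral of $\Theta_{\e^s}^{2+2\epsilon}$ using the hypothesis together with Lemma~\ref{l:intfinite}, then apply a H\"older-type inequality to pass to $\int_1^\infty \Theta_{\e^s}\,ds$. The only differences are cosmetic: the paper chooses the weight $s^{1+2\epsilon}$ rather than your $s^{1+\beta}$ in the first step (legitimate since $\beta\geq 2\epsilon$ and $s\geq 1$) and phrases the second step as ``Cauchy--Schwarz'' with exponents $\tfrac12$, whereas your H\"older with $p=2+2\epsilon$ makes the bookkeeping explicit. Your careful treatment of the borderline $\beta=2\epsilon$---resolved by replacing $\beta$ with any $\beta'\in(2\epsilon,\,1/\alpha-1)$, which Lemma~\ref{l:intfinite} permits---is a point the paper glosses over, and your observation that the strict inequality $1/\alpha-1>2\epsilon$ in Theorem~\ref{t:criteria1} is exactly what makes this enlargement possible is apt.
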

 
 \begin{proof}
 By the assumption and Lemma \ref{l:intfinite}
 \begin{align}
 \int_1^{\infty} \Theta_{\e^s}^{2+2\epsilon}\,s^{1+2\epsilon}\, ds\leq -C\int_1^{\infty} F'(s)\,s^{1+\beta}\, ds<\infty\,  .
  \end{align}
  Combining this with the Cauchy-Schwarz inequality gives that
   \begin{align}
 \int_1^{\infty} \Theta_{\e^s}\, ds\leq \left(\int_1^{\infty} \Theta_{\e^s}^{2+2\epsilon}\,s^{1+2\epsilon}\, ds\right)^{\frac{1}{2}}\,\left(\int_1^{\infty} s^{-1-2\epsilon}\, ds\right)^{\frac{1}{2}}<\infty\,  .
  \end{align}
 \end{proof}
 
\subsection{Bounding the distance to cones} 

The following way of bounding the distance to the space of cones will be key later on:

\begin{Thm}  \label{t:fund}
Given $\epsilon$, $\V_m>0$, there exist $C(\epsilon,n,\V_m)>0$ and $c=c(n,\V_m)>1$ such that the following holds:  Let $M$ be an $n$-dimensional manifold with nonnegative Ricci curvature and Euclidean volume growth.  If $\V_M\geq \V_m$ and $b=G^{\frac{1} {2-n} } $, where $G=G(x,\cdot)$ is the Green's and $x\in M$ is fixed, then for $r$ sufficiently large
\begin{equation}
\Theta_{\frac{r}{c}}^{1+\epsilon} \leq C\, r^{-n} \int_{b\leq r} \left| \Hess_{b^2}-\frac{\Delta b^2}{n}g\right|\, .
\end{equation}
\end{Thm}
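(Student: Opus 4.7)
The strategy is to invoke Cheeger-Colding's quantitative almost-rigidity theory from \cite{ChC1}, using $b$ as a candidate radial function for the approximating cone. On an exact metric cone with vertex $v$, the distance $\rho$ to $v$ satisfies $\Hess_{\rho^2} = 2g$ and, since $\Delta \rho^2 = 2n$, this coincides with $\Hess_{\rho^2} = \frac{\Delta \rho^2}{n} g$. Thus the integrand $|\Hess_{b^2} - \frac{\Delta b^2}{n}g|$ measures exactly the pointwise failure of $b$ to be a cone-distance, and its average over $\{b \leq r\}$ quantifies this deviation at scale $r$.

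The plan is to proceed in three steps. First, using the Euclidean volume growth hypothesis $\V_M \geq \V_m$ together with Theorem \ref{t:asympsharpGreen} and the bound $b \leq r$ from Lemma \ref{l:cLsub1}, I would select $c = c(n, \V_m) > 1$ so that for $r$ sufficiently large
\begin{align*}
B_{r/c}(x) \subset \{b \leq r\} \subset B_{c r}(x),
\end{align*}
which allows one to freely convert between sublevel sets of $b$ and metric balls of comparable size. Second, I would record the rigidity baseline: if the trace-free Hessian and the Ricci term both vanish identically on $\{b \leq r\}$, then by Section 1 of \cite{ChC1} (as invoked in the corollaries to Theorem \ref{t:mainmono}) the sublevel set is isometric to a ball in a cone with $b$ equal to the distance to the vertex, giving $\Theta_{r/c} = 0$. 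Third, the quantitative step: I would apply the Cheeger-Colding integral almost-rigidity theorem, refined by Colding-Naber \cite{CN1}, to conclude that smallness of $r^{-n}\int_{b \leq r} |\Hess_{b^2} - \frac{\Delta b^2}{n} g|$ (together with the non-negativity of $\Ric(\nabla b^2, \nabla b^2)$) forces $\{b \leq r\}$ to be Gromov-Hausdorff close to a ball in a cone with $b$ approximating the cone distance.

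The main obstacle is the last step, namely producing a quantitative Gromov-Hausdorff bound from an $L^1$ control on the trace-free Hessian. The quantity that arises naturally from the Bochner identity, as in the proof of Theorem \ref{t:mainmono}, is the $L^2$-norm of $\Hess_{b^2} - \frac{\Delta b^2}{n} g$; bridging from that $L^2$ control to the $L^1$ form appearing on the right costs, via Cauchy-Schwarz and Bishop-Gromov volume comparison, a power loss. The quantitative integral estimates of \cite{CN1} supply precisely the exponent $1 + \epsilon$ (with $C = C(\epsilon)$), and the dependence of $C$ and $c$ on $\V_m$ enters through the Cheng-Yau Harnack, Bishop-Gromov, and segment-inequality constants implicit in this machinery, as well as through the volume ratio $\V_M / \Vol(B_1(0))$ that fixes the asymptotic scale of $b$ by Theorem \ref{t:asympsharpGreen}.
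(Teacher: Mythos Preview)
The paper does not actually prove Theorem~\ref{t:fund} here: after the statement it says only that ``The proof of this theorem uses \cite{ChC1} and will be given in \cite{CM4}.'' So there is no detailed argument to compare against, only the indicated ingredients. Your outline is consistent with those ingredients: you correctly identify the Cheeger--Colding almost-rigidity theory of \cite{ChC1} as the engine, you pin down the role of the inclusion $B_{r/c}(x)\subset\{b\leq r\}\subset B_{cr}(x)$ (which does follow from Lemma~\ref{l:cLsub1} and Theorem~\ref{t:asympsharpGreen} together with the Li--Yau bounds), and you attribute the $1+\epsilon$ exponent to the quantitative sharpening in \cite{CN1}, exactly as the introduction of the paper does.

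One point in your last paragraph is stated backwards. You write that ``bridging from that $L^2$ control to the $L^1$ form appearing on the right costs, via Cauchy--Schwarz, a power loss.'' But the paper goes in the opposite direction: Theorem~\ref{t:fund} is the $L^1$ statement, and Corollary~\ref{c:fund} derives the $L^2$ version \emph{from} it by Cauchy--Schwarz. So the $L^1$ inequality is the stronger one, not a consequence of the $L^2$ one. The genuine source of the $\epsilon$-loss is the passage from the qualitative almost-rigidity of \cite{ChC1} (which is an $\epsilon$--$\delta$ statement with no explicit rate) to a power-law bound $\Theta\leq C\,(\text{integral})^{\gamma}$; this is where the H\"older-type estimates of \cite{CN1} enter. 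Apart from this mislabelling of where the exponent comes from, your plan matches what the paper signals, and the actual details are deferred to \cite{CM4} in any case.
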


The proof of this theorem uses \cite{ChC1} and will be given in \cite{CM4}.

\begin{Cor}  \label{c:fund}
Given $\epsilon$, $\V_m>0$, there exist $C(\epsilon,n,\V_m)>0$  and $c=c(n,\V_m)>1$ such that the following holds:  Let $M$ be an $n$-dimensional manifold with nonnegative Ricci curvature and Euclidean volume growth.  If $\V_M\geq \V_m$ and $b=G^{\frac{1} {2-n} } $, where $G=G(x,\cdot)$ is the Green's and $x\in M$ is fixed, then for $r$ sufficiently large
\begin{equation}
\Theta_{\frac{r}{c}}^{2+2\epsilon} \leq C\, r^{-n} \int_{b\leq r} \left| \Hess_{b^2}-\frac{\Delta b^2}{n}g\right|^2\, .
\end{equation}
\end{Cor}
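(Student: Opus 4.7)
The plan is to deduce Corollary \ref{c:fund} from Theorem \ref{t:fund} by a Cauchy--Schwarz argument, combined with a uniform upper bound on $\Vol(\{b\le r\})$. Write $T:=\Hess_{b^2}-\frac{\Delta b^2}{n}g$ for brevity. Theorem \ref{t:fund}, applied with the same $\epsilon$ as in the corollary, gives
\begin{equation*}
\Theta_{r/c}^{1+\epsilon}\le C\,r^{-n}\int_{b\le r}|T|.
\end{equation*}
Squaring and then applying the Cauchy--Schwarz inequality in the form $\bigl(\int_E|T|\bigr)^2\le \Vol(E)\int_E|T|^2$ to $E=\{b\le r\}$ yields
\begin{equation*}
\Theta_{r/c}^{2+2\epsilon}\le C^2\,r^{-2n}\,\Vol(\{b\le r\})\int_{b\le r}|T|^2.
\end{equation*}
It therefore remains to show $\Vol(\{b\le r\})\le C'\,r^n$ for all sufficiently large $r$, with $C'=C'(n,\V_m)$. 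Feeding such a bound into the previous display absorbs one factor of $r^n$ and produces exactly the conclusion of the corollary with a new constant $C(\epsilon,n,\V_m)$.

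For the volume bound I would invoke the asymptotic behaviour of $b$. By Theorem \ref{t:asympsharpGreen}, together with (3.38) on p.~1374 of \cite{CM2} recalled in the proof of Theorem \ref{t:asymptotic}, the ratio $b(y)/d(x,y)$ tends uniformly to $L_M:=(\V_M/\Vol(B_1(0)))^{1/(n-2)}$ as $d(x,y)\to\infty$. Since $\V_M\ge\V_m$, this limit is bounded below by a positive constant $L_m=L_m(n,\V_m)$. Hence there exists $R_0=R_0(n,\V_m)$ such that $b(y)\ge \frac{1}{2}L_m\,d(x,y)$ whenever $d(x,y)\ge R_0$, and so for all $r$ large enough we have $\{b\le r\}\subset B_{2r/L_m}(x)$. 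The Bishop--Gromov volume comparison theorem then gives $\Vol(B_{2r/L_m}(x))\le (2/L_m)^n\,\Vol(B_1(0))\,r^n$, which is the required bound.

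The main (and only) ingredient beyond Theorem \ref{t:fund} itself is this volume bound; the Cauchy--Schwarz step is essentially automatic. I do not expect a genuine obstacle, since the passage from an $L^1$ integrated quantity to the corresponding $L^2$ quantity always works provided the region of integration has volume of order $r^n$, and this is immediate from the Euclidean volume growth hypothesis combined with the sharp asymptotics of the Green's function already established in the paper.
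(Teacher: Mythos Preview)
Your approach is correct and is essentially the paper's own proof: the paper simply says ``This follows from Theorem \ref{t:fund} by the Cauchy--Schwarz inequality,'' and you have spelled out the one auxiliary fact that makes this work, namely $\Vol(\{b\le r\})\le C'(n,\V_m)\,r^n$. A minor remark: that volume bound can be obtained more directly from the Li--Yau upper bound for $G$ (which under Euclidean volume growth gives $G(x,y)\le C\,d(x,y)^{2-n}$, hence $b\ge c\,d(x,\cdot)$), rather than via the sharp asymptotics; also, the threshold $R_0$ may depend on $M$ and not just on $(n,\V_m)$, but this is harmless since the corollary only claims the estimate for $r$ sufficiently large.
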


\begin{proof}
This follows from Theorem \ref{t:fund} by the Cauchy-Schwarz inequality.
\end{proof}

Combining Theorem \ref{t:mainmono} with Corollary \ref{c:fund} we get the following inequality (see \cite{CM4} for more details):

\begin{Thm}  \label{t:fund2a}
Given $\epsilon>0$, there exist $C=C(\epsilon,n,\V_M)>0$  and $c=c(n,\V_m)>1$ such that for $r$ sufficiently large
\begin{align}
      C\, \left(A-2(n-1)\, V\right)'\geq  \frac{\Theta_{\frac{r}{c}}^{2+2\epsilon}}{r}\, .
       \end{align}
\end{Thm}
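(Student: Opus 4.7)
\textbf{Proof plan for Theorem \ref{t:fund2a}.} The plan is to combine the exact formula for the derivative of $A - 2(n-1)V$ from Theorem \ref{t:mainmono} with the Gromov-Hausdorff estimate from Corollary \ref{c:fund}, using nonnegativity of Ricci curvature to discard the unwanted term. The work has essentially already been done in the preceding sections; what remains is a matching of expressions and constants.

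First, I would invoke Theorem \ref{t:mainmono} to write
\begin{equation*}
\left(A-2(n-1)V\right)'(r) = \frac{1}{2}r^{-1-n}\int_{b\leq r}\left(\left|\Hess_{b^2}-\frac{\Delta b^2}{n}g\right|^2 + \Ric(\nabla b^2,\nabla b^2)\right).
\end{equation*}
Since $M$ has nonnegative Ricci curvature, the $\Ric(\nabla b^2,\nabla b^2)$ integrand is nonnegative, so it can be discarded to obtain the one-sided estimate
\begin{equation*}
\left(A-2(n-1)V\right)'(r) \geq \frac{1}{2r}\cdot r^{-n}\int_{b\leq r}\left|\Hess_{b^2}-\frac{\Delta b^2}{n}g\right|^2.
\end{equation*}

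Next, I would apply Corollary \ref{c:fund} directly: given $\epsilon > 0$ and a lower volume bound $\V_M \geq \V_m$ (here $\V_m$ can be taken to depend on $\V_M$ itself since $\V_M$ is fixed by hypothesis), there exist constants $C = C(\epsilon,n,\V_M)$ and $c = c(n,\V_M) > 1$ so that
\begin{equation*}
\Theta_{r/c}^{\,2+2\epsilon} \leq C\, r^{-n}\int_{b\leq r}\left|\Hess_{b^2}-\frac{\Delta b^2}{n}g\right|^2
\end{equation*}
for all $r$ sufficiently large. Substituting this into the previous display and renaming the constant $C$ yields exactly the asserted inequality
\begin{equation*}
C\,\left(A-2(n-1)V\right)'(r) \geq \frac{\Theta_{r/c}^{\,2+2\epsilon}}{r}.
\end{equation*}

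The substantive content is thus entirely contained in Corollary \ref{c:fund} (which rests on Theorem \ref{t:fund}, proved via \cite{ChC1} in \cite{CM4}); the remaining step is a bookkeeping combination with Theorem \ref{t:mainmono}. The only genuine obstacle would be if the Ricci term had the wrong sign, but nonnegative Ricci curvature makes it favorable here. One should verify that the range of $r$ for which Corollary \ref{c:fund} applies is consistent with "sufficiently large $r$" in the statement, and that the dependence $\V_m$ in Corollary \ref{c:fund} may indeed be replaced by the fixed value $\V_M > 0$, both of which are immediate.
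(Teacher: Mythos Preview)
Your proposal is correct and follows exactly the approach the paper indicates: the paper states the theorem as a direct consequence of combining Theorem~\ref{t:mainmono} with Corollary~\ref{c:fund} (deferring details to \cite{CM4}), and your argument spells out precisely that combination, using nonnegative Ricci curvature to drop the $\Ric(\nabla b^2,\nabla b^2)$ term.
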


Likewise from Theorem \ref{t:secondmono1} and Corollary \ref{c:fund} we get the following inequality (see \cite{CM4} for more details):

\begin{Thm}  \label{t:fund3a}
Given $\epsilon>0$, there exist $C=C(\epsilon,n,\V_M)>0$  and $c=c(n,\V_m)>1$ such that for $r$ sufficiently large
\begin{equation}
-C\,\left(r^{2-n} \, \left[A-\Vol (\partial B_1(0))\right]\right)'\geq r^{1-n}\int_0^r\frac{\Theta_{\frac{s}{c}}^{2+2\epsilon}}{s}\,ds\, .
\end{equation}
\end{Thm}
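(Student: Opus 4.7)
The plan is to combine the exact derivative formula of Theorem \ref{t:secondmono1} with the scale-by-scale lower bound of Corollary \ref{c:fund}, mediated by a coarea plus integration-by-parts step that converts the $b^{-n}$-weighted bulk integral on the right of Theorem \ref{t:secondmono1} into an integral over radii, which is precisely the shape of the right-hand side in the statement.

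First, dropping the nonnegative Ricci contribution in Theorem \ref{t:secondmono1} gives
\begin{equation*}
\left(r^{2-n}\,[A-\Vol(\partial B_1(0))]\right)'\ \geq\ \tfrac{1}{2}\,r^{1-n}\int_{b\leq r}\Bigl|\Hess_{b^2}-\tfrac{\Delta b^2}{n}\,g\Bigr|^2\,b^{-n}.
\end{equation*}
Set $f(s)=\int_{b\leq s}|\Hess_{b^2}-\tfrac{\Delta b^2}{n}g|^2$. By the coarea formula $\int_{b\leq r}|\cdots|^2 b^{-n}=\int_0^r s^{-n}f'(s)\,ds$, and integration by parts rewrites this as
\begin{equation*}
\int_0^r s^{-n}f'(s)\,ds\ =\ r^{-n}f(r)+n\int_0^r s^{-n-1}f(s)\,ds,
\end{equation*}
once one checks that the boundary term at $s=0$ vanishes; this uses Lemma \ref{l:ref} together with the fact that near $x$ the function $b$ is comparable to $d(x,\cdot)$, so the trace-free Hessian of $b^2$ is continuous at $x$ and $f(s)=O(s^n)$ with small constant. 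Both terms on the right are nonnegative, so this already gives the lower bound $n\int_0^r s^{-n-1}f(s)\,ds$.

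Now Corollary \ref{c:fund} delivers the scale-by-scale inequality $f(s)\geq C_1^{-1}\,s^{n}\,\Theta_{s/c}^{2+2\epsilon}$ for all $s\geq s_0$ large enough. Splitting $[0,r]=[0,s_0]\cup[s_0,r]$ and absorbing the bounded contribution on $[0,s_0]$ into the constant (which is legitimate once $r$ is taken large so that $\int_{s_0}^{r}\Theta_{s/c}^{2+2\epsilon}/s\,ds$ dominates the bounded $[0,s_0]$-tail of the target integral), one obtains
\begin{equation*}
n\int_0^r s^{-n-1}f(s)\,ds\ \geq\ C_2^{-1}\int_0^r\frac{\Theta_{s/c}^{2+2\epsilon}}{s}\,ds.
\end{equation*}
Chaining the three displays gives the claimed inequality, with $C=C(\epsilon,n,\V_M)$ inherited from Corollary \ref{c:fund} and the bookkeeping above. (The sign in the statement is a convention point: Theorem \ref{t:secondmono1} actually gives nonnegativity of the derivative, so the estimate is used without any sign flip.)

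The main technical obstacle is the compatibility of the ``$r$ sufficiently large'' hypothesis in Corollary \ref{c:fund} with the integration from $s=0$; it requires noticing that the small-scale contribution to $\int_0^r \Theta_{s/c}^{2+2\epsilon}/s\,ds$ is itself bounded and can be absorbed by choosing $r$ large. A minor subsidiary point is justifying the vanishing of the boundary term $\lim_{s\to 0}s^{-n}f(s)$, which is handled by the local Euclidean behavior of $b$ encoded in Lemma \ref{l:ref}.
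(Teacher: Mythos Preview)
Your approach is essentially what the paper indicates: the paper's own ``proof'' is the single sentence that the result follows from combining Theorem \ref{t:secondmono1} with Corollary \ref{c:fund}, with details deferred to \cite{CM4}. You have carried out precisely that combination, supplying the coarea plus integration-by-parts bridge that turns the $b^{-n}$-weighted bulk integral into a radial integral to which Corollary \ref{c:fund} applies scale by scale; you have also correctly flagged the sign convention issue in the statement.

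One point to tighten: the absorption of the $[0,s_0]$ tail is not as automatic as you suggest. Your justification assumes that $\int_{s_0}^{r}\Theta_{s/c}^{2+2\epsilon}s^{-1}\,ds$ eventually dominates the fixed small-scale contribution $\int_0^{s_0}\Theta_{s/c}^{2+2\epsilon}s^{-1}\,ds$, but nothing you have written rules out the possibility that the large-scale integral stays small (e.g.\ $\Theta_r$ decaying very rapidly at infinity). The fix is to use the other half of your integration-by-parts identity: you dropped not only $r^{-n}f(r)$ but, more importantly, the entire left-hand side itself is $r^{1-n}$ times the monotone quantity $\int_{b\leq r}(\cdots)b^{-n}$, which for $M\neq\RR^n$ is bounded below by a positive constant independent of $r$. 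Since the small-scale piece $r^{1-n}\int_0^{s_0}\Theta_{s/c}^{2+2\epsilon}s^{-1}\,ds$ is $r^{1-n}$ times a fixed constant, it is controlled directly by the left-hand side. This makes the constant $C$ depend on the specific manifold rather than only on $(\epsilon,n,\V_M)$; the paper's treatment is silent on this point and defers it to \cite{CM4}, so your level of rigor is already at least as high as the paper's.
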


\subsection{Uniqueness criteria revisited}

By combining a number of the results above we can now show the following uniqueness criteria for manifolds with nonnegative Ricci curvature and Euclidean volume growth:

\begin{Thm}  \label{t:uniqueC}
Let $M$ be an $n$-dimensional manifold with nonnegative Ricci curvature and Euclidean volume growth.  If for constants $\alpha> 0$, $K$, and all $r$ sufficiently large
\begin{align}
2(n-1)\, V-A&\geq K\, ,\\
-r\, \left(2(n-1)\, V-A\right)'&\geq \left(2(n-1)\, V-A-K\right)^{1+\alpha}\, ,
\end{align}
then the tangent cone at infinity of $M$ is unique.
\end{Thm}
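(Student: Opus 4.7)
The plan is to reduce the theorem to Theorem \ref{t:criteria1} via the logarithmic substitution $r = e^s$. Define
\[
F(s) = 2(n-1)\, V(e^s) - A(e^s) - K\,,
\]
which is nonnegative for all sufficiently large $s$ by the first hypothesis. The chain rule gives $F'(s) = r\,\bigl(2(n-1)V - A\bigr)'(r)$ evaluated at $r = e^s$, so the second hypothesis of Theorem \ref{t:uniqueC} translates directly into
\[
-F'(s) \geq F(s)^{1+\alpha}\,,
\]
which is precisely the ODE assumption in Theorem \ref{t:criteria1}.

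For the geometric half of that criterion, I would invoke Theorem \ref{t:fund2a}, which produces constants $C = C(\epsilon, n, \V_M) > 0$ and $c = c(n, \V_M) > 1$ with
\[
C\,\bigl(A - 2(n-1)V\bigr)'(r) \geq \Theta_{r/c}^{2+2\epsilon}/r\,.
\]
Multiplying by $r$ and setting $r = e^s$ yields $-CF'(s) \geq \Theta_{e^s/c}^{2+2\epsilon}$. The harmless shift $\tilde F(s) = F(s + \log c)$ preserves the ODE inequality and is irrelevant for tail asymptotics, so $\tilde F$ satisfies both hypotheses of Theorem \ref{t:criteria1}. Choosing $\epsilon > 0$ small enough that $\tfrac{1}{\alpha} - 1 > 2\epsilon \geq 0$ and applying that criterion delivers uniqueness of the tangent cone at infinity.

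The main obstacle is already packaged upstream: it is Theorem \ref{t:fund2a} itself, whose proof combines the first monotonicity formula (Theorem \ref{t:mainmono}) with Corollary \ref{c:fund} to bound the Gromov-Hausdorff distance to the nearest cone by $r^{-n} \int_{b \leq r} |\Hess_{b^2} - \tfrac{\Delta b^2}{n} g|^2$. Given that input, the present theorem is essentially assembly: the ODE hypothesis forces polynomial decay of $F$ via Lemma \ref{l:intfinite}, the geometric lower bound on $-F'$ then converts this decay into summability of $\Theta_r/r$ by the Cauchy-Schwarz step in the corollary to that lemma, and the triangle-inequality argument from the proof of Theorem \ref{t:criteria1a} forces the tangent cone at infinity to be unique. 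Thus no genuinely new analytic input is needed beyond arranging the chain rule and quoting Theorems \ref{t:fund2a} and \ref{t:criteria1}.
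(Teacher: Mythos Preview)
Your proposal is correct and follows essentially the same route as the paper: both reduce to Theorem \ref{t:criteria1} via the substitution $r=e^s$, invoke Theorem \ref{t:fund2a} for the geometric lower bound on $-F'$, and then quote the criterion. The only cosmetic difference is how the constant $c$ from Theorem \ref{t:fund2a} is absorbed: the paper bakes it into the definition from the start by setting $F_0(r)=2(n-1)V(cr)-A(cr)-K$ and $F(s)=F_0(e^s)$, whereas you define $F$ without the $c$ and then shift via $\tilde F(s)=F(s+\log c)$; these are equivalent.
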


\begin{proof}
Set 
\begin{equation}
F_0(r)=2(n-1)\, V(c r)-A(c r)-K\, ,
\end{equation}
and 
\begin{equation}
F(s)=F_0(\e^s)\, .
\end{equation}
Then $F$ is nonnegative, 
\begin{equation}
-F'(s)=-\e^s\,F_0'(\e^s)=c\, \e^s\, \left(A-2(n-1)\, V\right)'(\e^s)\geq F^{1+\alpha}(s)\, ,
\end{equation}
and by Theorem \ref{t:fund2a} for $s$ sufficiently large
\begin{equation}
-C\,F'(s)=-C\,\e^s\,F_0'(\e^s)=c\, C\,\e^s\, \left(A-2(n-1)\, V\right)'(c\, \e^s)\geq \Theta_{\e^s}^{2+2\epsilon}\, .
\end{equation}
Uniqueness follows now from Theorem \ref{t:criteria1}.
\end{proof}

\subsection{Dini conditions}

The notion of a set being scale invariantly close to a cone is parallel to the classical Reifenberg condition for $n$ dimensional subset of some big Euclidean space.  Here being close to a cone is replaced by the stronger condition to being close to an $n$-dimensional affine linear subset and Gromov-Hausdorff distance is replaced by Hausdorff distance; see \cite{R}, \cite{T} and compare with the appendix 1 of \cite{ChC1} where this is generalized to metric spaces.   For locally compact Reifenberg sets that satisfies an additional Dini condition, which is very much in the spirit of the earlier discussion of this section, T. Toro has proven that they can be parametrized by maps with bi-Lipschitz constants close to $1$.  

The Dini condition of Toro, \cite{T}, is the condition that for all $x$ 
\begin{equation}
\int_0^1 \frac{ \Theta_{r}^2}{r}\, dr\leq \epsilon^2\, ,
\end{equation}
for some $\epsilon>0$ sufficiently small.  
This can also be expressed as a condition of sum of $\Theta_r^2$ over dyadic small scales.  Namely, as
\begin{equation}
\sum \Theta_{r}^2\leq \epsilon^2\, ,
\end{equation}
for a possibly different $\epsilon>0$.

Note that it follows from our results above that in our setting we have Dini conditions like those of Toro with a power slightly bigger than two and with our $\Theta_r$.  However, without an additional rate of convergence, like that in Theorem \ref{t:criteria1a}, uniqueness of tangent cones does not hold; \cite{P2}, \cite{ChC1}, \cite{CN2}.

  \section{Monotone quantities for heat flow}  \label{s:perelman}

For completeness and for the readers convenience we have included the present section that discuss Perelman's $F$ and $W$ functional in the present setting of manifolds with nonnegative Ricci curvature.    There are very few new things in this section and most of the results can be found in \cite{P1} or \cite{N1}--\cite{N3}; however the presentation we give emphasize the parallels to the previous sections which is also the rational for including it.

\subsection{The quantities}
Let $H(x,y,t)$ be the heat kernel on $M$.  For $x$ fixed set $H_x(y,t)=H(x,y,t)$.  
\vskip2mm
We define a function (the Shannon entropy, cf. \cite{N1}--\cite{N3}) $S$ by
\begin{equation}
S=S_x(t)=-\int_M \log H_x\,H_x-\frac{n}{2}\,\log (4\pi\, t)-\frac{n}{2}=\int_M h\,H_x-\frac{n}{2}\,\log (4\pi\,t)-\frac{n}{2}\, .
\end{equation}
Where $h=-\log H_x$.    The constant $\frac{n}{2}$ comes from that
\begin{equation}
\frac{n}{2}=\int_{\RR^n}\frac{|y|^2}{4}\,\e^{-\frac{|y|^2}{4}}=\int_{\RR^n}\frac{|y|^2}{4t}\,\e^{-\frac{|y|^2}{4t}}\, .
\end{equation} 
Moreover, on Euclidean space $S$ vanishes.    Observe also that on a smooth manifold
\begin{equation}  \label{e:euclheatasymp} 
\lim_{t\to 0}S(t)=0\, .
\end{equation}

Before we introduce the next quantity (which is essentially {Perelman's $\cF$-functional, \cite{P1}) we will need to recall the parabolic gradient estimate.  Namely, Li-Yau, \cite{LY}, showed that for any positive solution $u$ to the heat equation on a manifold with nonnegative Ricci curvature
\begin{align}
-\Delta\,\log u=\frac{|\nabla u|^2}{u^2}-\frac{u_t}{u}\leq \frac{n}{2t}\, .
\end{align}
Note that this quantity vanishes precisely on cones.  Integrating the Li-Yau inequality yields
\begin{equation}
F=F_x(t)=t\int_M\Delta\,h\,H_x-\frac{n}{2}=-t\int_M\langle\nabla h,\nabla H_x\rangle-\frac{n}{2}=t \int_M |\nabla h|^2\,H_x-\frac{n}{2}\leq 0\, .
\end{equation}
We will see shortly that $F=t\,S'$.    $F$ is Perelman's $\cF$-functional adapted to the current setting; see \cite{P1} and cf. \cite{N1}--\cite{N3}.  

Observe that $F$ vanishes on any cone with vertex $x$.  
This is not the case for $S$ rather the value of $S$ depends on the volume growth.  Note also that when $M$ is a smooth manifold, then 
\begin{equation}
\lim_{t\to 0} F(t)=0\, .
\end{equation}
Moreover, when $M$ has nonnegative Ricci curvature and Euclidean volume growth, then it follows easily from the cone structure at infinity, \cite{ChC1}, that 
\begin{equation}  \label{e:Fasymp}
\lim_{t\to \infty}F(t)=0\, ,
\end{equation}
cf. \cite{N2}.  

Perelman, \cite{P1} (see also \cite{N1}--\cite{N3}), defined a closely related functional $W$ by  
\begin{equation}
W(t)=W_x(t)=F(t)+S(t)=\int_M\left(t\,|\nabla f|^2+f-n\right)\,H_x\, .
\end{equation}
Here $f=-\log H_x-\frac{n}{2}\,\log (4\pi t)$.  Moreover, in \cite{P1}, \cite{N1}--\cite{N3}, it is shown that
\begin{equation}  \label{e:deriW}
\frac{d}{dt}W= -2t\int_M\left( \left|\Hess_f-\frac{1}{2t}g\right|^2+\Ric (\nabla f,\nabla f)\right)\,H_x\, .
\end{equation}

\subsection{Entropy calculations and monotonicity}

We begin this section by showing that the derivative of $S$ is given in terms of $F$.  Once we have that it follows immediately what the derivative of $F$ is by the results of Perelman, \cite{P1}, cf. \cite{N1}--\cite{N3}.  

\begin{Lem}  \label{l:deriS}
\begin{align}
F=t\,S'\, .
\end{align}
\end{Lem}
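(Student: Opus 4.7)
The plan is to verify the identity $F = tS'$ by direct differentiation of $S$ under the integral sign, followed by one integration by parts. All of the heat kernel decay needed to justify these manipulations is available on a manifold with nonnegative Ricci curvature from the Li–Yau Gaussian bounds on $H_x$ and $|\nabla H_x|$, so I will not belabor those justifications.

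First I would write $S$ in the form
\begin{equation*}
S(t) = -\int_M H_x \log H_x \, d\Vol - \frac{n}{2}\log(4\pi t) - \frac{n}{2},
\end{equation*}
and differentiate in $t$. Since $\partial_t H_x = \Delta H_x$ and $\int_M H_x = 1$ (so $\int_M \partial_t H_x = 0$), the product-rule term $\int_M \partial_t H_x$ drops out, leaving
\begin{equation*}
S'(t) = -\int_M (\Delta H_x)\,\log H_x \, d\Vol - \frac{n}{2t}.
\end{equation*}

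Next I would integrate by parts in the spatial variable. Writing $h = -\log H_x$, and using that $H_x$ together with its gradient decay Gaussianly at infinity so that the boundary terms vanish,
\begin{equation*}
-\int_M (\Delta H_x)\log H_x \, d\Vol = \int_M \langle \nabla H_x, \nabla \log H_x\rangle \, d\Vol = \int_M \frac{|\nabla H_x|^2}{H_x}\, d\Vol = \int_M |\nabla h|^2 \, H_x \, d\Vol.
\end{equation*}
Combining these two displays gives $S'(t) = \int_M |\nabla h|^2\,H_x\,d\Vol - \frac{n}{2t}$, and multiplying by $t$ recovers exactly the last expression for $F$ in the definition preceding the lemma, namely $F(t) = t\int_M |\nabla h|^2 H_x - \tfrac{n}{2}$.

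The only possible obstacle is the technical one of interchanging differentiation with the integral and discarding the boundary terms in the integration by parts. On a complete manifold with $\Ric \ge 0$ this is standard: the Li–Yau Harnack inequality and heat kernel bounds give Gaussian decay of both $H_x$ and $|\nabla H_x|$, and $|\log H_x|$ grows only polynomially in the distance, so every integrand here is dominated by an integrable function uniformly in $t$ on compact time intervals. This is really a bookkeeping step rather than a conceptual one, so the essence of the proof is the two-line computation above.
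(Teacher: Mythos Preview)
Your proof is correct and is essentially the same computation as the paper's: differentiate $S$ under the integral, use $\partial_t H_x=\Delta H_x$ and $\int_M \partial_t H_x=0$, then integrate by parts once to arrive at $\int_M |\nabla h|^2 H_x - \tfrac{n}{2t}$. The only addition you make is the (appropriate) remark about Li--Yau bounds justifying the formal manipulations, which the paper leaves implicit.
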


\begin{proof}
A straightforward calculation yields
\begin{align}
S'&=-\partial_t \int_M \log H_x\,H_x-\frac{n}{2t}=-\int_M \partial_tH_x-\int_M\log H_x\,\partial_t H_x-\frac{n}{2t}\\
&=-\partial_t \int_M H_x-\int_M\log H_x\,\Delta H_x-\frac{n}{2t}=\int_M\frac{|\nabla H_x|^2}{H_x}+\frac{n}{2t}
=\int_M |\nabla h|^2\,H_x-\frac{n}{2t}=\frac{F}{t}\leq 0\, .\notag
\end{align}
\end{proof}

Since $W=F+S$ we get the next lemma by combining \eqr{e:deriW} and Lemma \ref{l:deriS}.

\begin{Lem}  \label{l:mono}
\begin{equation}
\partial_t(t\,F)= -2t^2\int_M\left( \left|\Hess_h-\frac{1}{2t}g\right|^2+\Ric (\nabla h,\nabla h)\right)\,H_x
\end{equation}
\end{Lem}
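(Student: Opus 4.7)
The plan is to derive this from the identity $W = F + S$ combined with the already-stated formula \eqr{e:deriW} for $\partial_t W$ and the derivative formula $S' = F/t$ from Lemma \ref{l:deriS}. The key algebraic observation is that $(tF)' = F + tF' = t(F/t + F') = t(S' + F') = tW'$, so the claim reduces to multiplying the formula in \eqr{e:deriW} by $t$ and rewriting the integrand in terms of $h$ rather than $f$.

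First I would note that by definition $f = -\log H_x - \frac{n}{2}\log(4\pi t) = h - \frac{n}{2}\log(4\pi t)$, so $f$ and $h$ differ by a function of $t$ alone. Consequently the spatial gradient and Hessian satisfy
\begin{equation}
\nabla f = \nabla h, \qquad \Hess_f = \Hess_h,
\end{equation}
and in particular
\begin{equation}
\left|\Hess_f - \frac{1}{2t}g\right|^2 + \Ric(\nabla f,\nabla f) = \left|\Hess_h - \frac{1}{2t}g\right|^2 + \Ric(\nabla h,\nabla h).
\end{equation}

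Next I would use Lemma \ref{l:deriS} to write $S' = F/t$, which combined with $W = F + S$ gives
\begin{equation}
\partial_t (tF) = F + tF' = tS' + tF' = tW'.
\end{equation}
Substituting the expression for $W'$ from \eqr{e:deriW} and replacing $f$ by $h$ using the identification above then yields
\begin{equation}
\partial_t(tF) = tW' = -2t^2\int_M\left( \left|\Hess_h-\frac{1}{2t}g\right|^2+\Ric (\nabla h,\nabla h)\right)\,H_x,
\end{equation}
as desired.

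There is no real obstacle here — the statement is a direct bookkeeping consequence of \eqr{e:deriW} and Lemma \ref{l:deriS}. The only point that requires a moment's attention is the passage from $f$ to $h$, which is immediate because they differ by a purely time-dependent quantity.
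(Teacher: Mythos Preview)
Your proposal is correct and matches the paper's own derivation: the text immediately preceding the lemma says ``Since $W=F+S$ we get the next lemma by combining \eqr{e:deriW} and Lemma \ref{l:deriS},'' which is precisely your argument, including the observation that $f$ and $h$ differ by a function of $t$ alone so that $\Hess_f=\Hess_h$ and $\nabla f=\nabla h$. The paper does additionally include, ``for completeness,'' a direct Bochner-formula computation that establishes both Lemma~\ref{l:mono} and \eqr{e:deriW} from scratch, but your route is the one the paper actually cites as the proof.
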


Using the Cauchy-Schwarz inequality twice we get the following:

\begin{Cor}
\begin{equation}
-\partial_t(t\,F)\geq \frac{2}{n}\,F^2\, .
\end{equation}
\end{Cor}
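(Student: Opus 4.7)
The plan is to derive the inequality directly from Lemma \ref{l:mono}, dropping the Ricci term (which is nonnegative) and then bounding the remaining Hessian integral from below by $\frac{1}{n}F^2 / t^2$ via two applications of Cauchy--Schwarz, exactly as the hint in the statement suggests.

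First, since $M$ has nonnegative Ricci curvature, Lemma \ref{l:mono} gives
\begin{equation}
-\partial_t(t\,F)\geq 2t^2\int_M \left|\Hess_h-\frac{1}{2t}g\right|^2 H_x\, .
\end{equation}
The first Cauchy--Schwarz inequality is the pointwise trace inequality: for any symmetric $n\times n$ matrix $T$, $|T|^2\geq \frac{1}{n}(\Tr T)^2$. Applying this to $T=\Hess_h-\frac{1}{2t}g$, whose trace is $\Delta h-\frac{n}{2t}$, yields
\begin{equation}
\left|\Hess_h-\frac{1}{2t}g\right|^2\geq \frac{1}{n}\left(\Delta h-\frac{n}{2t}\right)^2\, .
\end{equation}

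Second, since $\int_M H_x=1$ and $F=t\int_M \Delta h\, H_x-\frac{n}{2}=t\int_M\bigl(\Delta h-\frac{n}{2t}\bigr)H_x$, the Cauchy--Schwarz inequality against the probability measure $H_x\,d\Vol$ gives
\begin{equation}
F^2=t^2\left(\int_M\left(\Delta h-\frac{n}{2t}\right)H_x\right)^2\leq t^2\int_M\left(\Delta h-\frac{n}{2t}\right)^2 H_x\, .
\end{equation}

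Combining the two bounds produces
\begin{equation}
-\partial_t(t\,F)\geq \frac{2t^2}{n}\int_M\left(\Delta h-\frac{n}{2t}\right)^2 H_x \geq \frac{2}{n}\,F^2\, ,
\end{equation}
which is the desired inequality. There is no real obstacle here: the only subtlety worth noting is that writing $F$ as the integral of $t\,(\Delta h-\tfrac{n}{2t})$ against the probability density $H_x$ is what allows the second Cauchy--Schwarz to line up cleanly with the Hessian-squared integrand produced by the first one.
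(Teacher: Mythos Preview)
Your argument is correct and follows exactly the same route as the paper: drop the nonnegative Ricci term from Lemma \ref{l:mono}, apply the pointwise trace inequality $|T|^2\geq\frac{1}{n}(\Tr T)^2$ to $T=\Hess_h-\frac{1}{2t}g$, and then use Cauchy--Schwarz against the probability measure $H_x\,d\Vol$ to recover $F^2$. If anything, your write-up is cleaner in making explicit that $F=t\int_M(\Delta h-\tfrac{n}{2t})H_x$ so the second Cauchy--Schwarz lines up.
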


\begin{proof}
By the Cauchy-Schwarz inequality
\begin{align}
\frac{1}{n}\,\left(\Delta\,h-\frac{n}{2t}\right)^2\leq \left|\Hess_h-\frac{1}{2t}\,g\right|^2\, .
\end{align}
Applying the Cauchy-Schwarz inequality one more time yields
\begin{align}
-\partial_t(t\,F)&= 2t^2\int_M\left( \left|\Hess_h-\frac{1}{2t}g\right|^2+\Ric (\nabla h,\nabla h)\right)\,H_x\notag\\
&\geq 2t^2\int_M\left|\Hess_h-\frac{1}{2t}g\right|^2\,H_x\geq \frac{2}{n}\,t^2\left(\int_M\Delta\,h-\frac{n}{2t}\right)^2\,H_x=\frac{2}{n}\,F^2\, .
\end{align}
\end{proof}

For completeness we include the calculation that yields Lemma \ref{l:mono} and thus also \eqr{e:deriW}.  

\begin{proof}
(of Lemma \ref{l:mono}).  
Let $u$ be a positive solution to the heat equation and set $h = -\log u$.  Then
\begin{equation}  \label{e:nice}
(\partial_t-\Delta)\,h=-|\nabla h|^2\, ,
\end{equation}
and 
\begin{equation}
(\partial_t-\Delta)\,\Delta\,h=\Delta (\partial_t-\Delta)\,h=-\Delta |\nabla h|^2\, .
\end{equation}
By \eqr{e:nice} $(\partial_t-\Delta)\,h=-|\nabla h|^2$.  Combining this with the Bochner formula yields
\begin{align}
	\frac{1}{2} \, \left( \partial_t - \Delta \right) \, |\nabla h|^2&= \langle \nabla \partial_th,\nabla h\rangle-\frac{1}{2} \,  \Delta \, |\nabla h|^2\notag\\
	&=  \langle \nabla (\partial_t-\Delta)\,h , \nabla h  \rangle  -  \left| \Hess_h \right|^2  - \Ric (\nabla h , \nabla h)\\
	&=- \langle \nabla h , \nabla |\nabla h|^2  \rangle  -
	\left| \Hess_h \right|^2  - \Ric (\nabla h , \nabla h) 
	\, .\notag
\end{align}
Thus, since $(\partial_t - \Delta) u = 0$ and $u \nabla h =- \nabla u$,
the product rule   gives
\begin{align}	\label{e:pr}
	\frac{1}{2} \, (\partial_t - \Delta)  (u \,  |\nabla h|^2)  &=
	 \langle \nabla u , \nabla    |\nabla h|^2 \rangle - u \, \left| \Hess_h \right|^2 - u \, \Ric (\nabla h , \nabla h)  
	 - \langle \nabla u , \nabla |\nabla h|^2 \rangle\notag    \\
	  &= - u \, \left| \Hess_h \right|^2 - u \, \Ric (\nabla h , \nabla h)  \, .
\end{align}
Differentiating and using Stokes' theorem to get that $\int \Delta (u \, |\nabla h|^2)=0$ gives
\begin{align}
	\partial_t
	\int_M  |\nabla h |^2 \, u   =  \int_M (\partial_t - \Delta)  (u \,  |\nabla h|^2)  
		   = - 2 \int_M u \,\left(  \left| \Hess_{h} \right|^2 + \Ric (\nabla h , \nabla h)  \right) \, ,
\end{align}
where the last equality used \eqr{e:pr}.  Rewriting we get 
\begin{align}
	\partial_t
	\int_M  |\nabla h |^2 \, H_x    &=	- 2 \int_M
	H_x \left( \left| \Hess_{h} \right|^2 + \Ric (\nabla h , \nabla h)
	\right)\notag \\
	&= - 2 \int_M
	H_x \left( \left| \Hess_{h}-\frac{1}{2t}g \right|^2 + \Ric (\nabla h , \nabla h)
	\right)+\frac{2}{t}\int_M\Delta h\,H_x+\frac{n}{2t^2}\int_MH_x\\
          &= - 2 \int_M
	H_x \left( \left| \Hess_{h}-\frac{1}{2t}g \right|^2 + \Ric (\nabla h , \nabla h)\right)-\frac{2}{t}\int_M|\nabla h|^2\,H_x+\frac{n}{2t^2}\, .  \notag
\end{align}
Therefore
\begin{align}
\left(t^2\,\int_M|\nabla h|^2\,H_x\right)'=-2\,t^2\int_M
	H_x \left( \left| \Hess_{h}-\frac{1}{2t}g \right|^2 + \Ric (\nabla h , \nabla h)
	\right)+\frac{n}{2}\, ,
\end{align}
The claim easily follows from this.  
\end{proof}

Using that the derivative of $S$ is given in terms of $F$ we get the following:

\begin{Cor}
Set $J(t)=t\,S(t)$, then
\begin{align}
J'&=W\, ,\\
J''&= -2t\int_M\left( \left|\Hess_f-\frac{1}{2t}g\right|^2+\Ric (\nabla f,\nabla f)\right)\,H\, .
\end{align}
\end{Cor}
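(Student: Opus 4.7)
The plan is to derive both formulas by straightforward differentiation of $J(t)=tS(t)$ and by assembling the identities already established in this section. No new analytic input is needed; the proof is essentially a one-line consequence of Lemma \ref{l:deriS} together with Perelman's monotonicity identity \eqr{e:deriW}.

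First, I would apply the product rule to $J(t)=tS(t)$ to obtain
\begin{align}
J'(t) = S(t) + t\,S'(t).
\end{align}
By Lemma \ref{l:deriS}, which established $F = t\,S'$, the second term is exactly $F(t)$. Since $W$ was defined as $W=F+S$, this yields
\begin{align}
J'(t) = S(t) + F(t) = W(t),
\end{align}
proving the first identity.

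Next, differentiating once more gives $J''(t) = W'(t)$, and the formula for $W'$ is precisely the content of Perelman's monotonicity identity \eqr{e:deriW} (which was derived explicitly at the end of the proof of Lemma \ref{l:mono} via the Bochner formula applied to $h=-\log H_x$, together with the relations $f = h - \tfrac{n}{2}\log(4\pi t)$ so that $\nabla f = \nabla h$ and $\Hess_f = \Hess_h$). Substituting \eqr{e:deriW} directly gives
\begin{align}
J''(t) = -2t\int_M \left( \left|\Hess_f - \tfrac{1}{2t}g\right|^2 + \Ric(\nabla f,\nabla f)\right) H\,,
\end{align}
which is the desired second formula.

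There is no real obstacle here since both pieces are already in place: Lemma \ref{l:deriS} handles the first derivative and \eqr{e:deriW} handles the second. The only thing one needs to check is consistency of normalizations, namely that the $\frac{n}{2}\log(4\pi t) + \frac{n}{2}$ normalization constants absorbed into $S$ and $f$ do not introduce extra terms — but since these are functions only of $t$ (not of the spatial variable), they contribute nothing to $\nabla f$ or $\Hess_f$, and the time-derivative bookkeeping was already carried out in the proof of Lemma \ref{l:mono}.
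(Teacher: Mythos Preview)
Your proof is correct and follows exactly the intended approach: the corollary is an immediate consequence of Lemma \ref{l:deriS} (giving $J'=S+tS'=S+F=W$) together with \eqr{e:deriW} (giving $J''=W'$), and the paper does not provide any further argument beyond this.
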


As noted earlier (see the discussion surrounding \eqr{e:euclheatasymp}), then it follows easily that for manifolds with nonnegative Ricci curvature and Euclidean volume growth $F(t)\to 0$ as $t\to \infty$.   On the other hand by the Li-Yau gradient estimate the integrand in $F$ is pointwise nonnegative thus the sup of the integrand tends to its average at infinity.  This easy fact for the heat kernel parallels the more complicated sharp asymptotic gradient estimate for the Green's function in Theorem \ref{t:asympsharpGreen}.  
\section{Parabolic distances to cones}

\subsection{Weighted distances to cones} 

We can also define the weighted distance to the space of cones as follows:

\begin{Def}
(Weighted scale invariant distance to the space of cones.)  
Suppose that $M$ is a smooth manifold with nonnegative Ricci curvature and $x\in M$ and $H$ is the heat kernel on $M$.    The weighted scale invariant distance to the space of cones is the function
\begin{equation}
\cC(t)=\cC_x (t)=\int _M\Theta_{d_M(x,y)}(x)\,H(x,y,t)\,dy\, .
\end{equation}
Likewise we define the weighted $L^{\alpha}$ scale invariant distance by
\begin{equation}
\cC_{\alpha} (t)=\cC_{\alpha, x} (t)=\left(\int_M \Theta^{\alpha}_{d_M(x,y)}(x)\,H (x,y,t)\,dy\right)^{\frac{1}{\alpha}}\, .
\end{equation}
\end{Def}

Note that if $M$ has Euclidean volume growth, then by \cite{LY}, \cite{LTW}, there exists a constant $C(n,\V_M)>0$ such that
\begin{align}
\Theta_{\sqrt{t}}\leq C\,\cC(t)\, .
\end{align}
In fact, by the Cauchy-Schwarz inequality for $\alpha\geq 1$
\begin{equation}
\cC(t)\leq \cC_{\alpha}(t)\, .
\end{equation}

\subsection{Bounding the distance to cones} 

From Theorem \ref{t:fund} we get (see \cite{CM4} for more details):

\begin{Thm}  \label{t:fund2}
Given $\epsilon>0$, there exist $C=C(\epsilon,n,\V_M)>0$  and $c=c(n,\V_m)>1$ such that if $M$ is an $n$-dimensional manifold with nonnegative Ricci curvature and $h=-\log H$, where $H=H_x$ is the heat kernel and $x\in M$ is fixed, then for $t$ sufficiently large
\begin{equation}
\cC^2 (t)\leq C\, t^2 \int_M \left| \Hess_h-\frac{1}{2t}g\right|^2\,H\, .
\end{equation}
\end{Thm}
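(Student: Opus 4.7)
The plan is to parallel the proof of Theorem~\ref{t:fund2a} in the parabolic setting: Lemma~\ref{l:mono} computes $-\partial_t(tF)/(2t^2)$ as $\int_M(|\Hess_h-\tfrac{1}{2t}g|^2+\Ric(\nabla h,\nabla h))\,H$, which will play the role that the right-hand side of Theorem~\ref{t:mainmono} did in the elliptic case. What is still needed is a parabolic version of Corollary~\ref{c:fund} converting smallness of the traceless Hessian of $h$ at the parabolic scale $\sqrt{t}$ into smallness of the Gromov-Hausdorff distance $\Theta_{\sqrt{t}/c}(x)$.

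First, by H\"older's inequality and $\int_M H\,dy=1$ one has $\cC(t)\leq \cC_{2+2\epsilon}(t)$, so
\begin{equation*}
\cC^2(t)\leq \left(\int_M\Theta_{d_M(x,y)}^{2+2\epsilon}(x)\,H\,dy\right)^{\frac{2}{2+2\epsilon}},
\end{equation*}
reducing the problem to estimating this weighted $(2+2\epsilon)$-moment of $\Theta$. Next I would prove the parabolic analog of Corollary~\ref{c:fund}: for $y$ with $d_M(x,y)$ comparable to $\sqrt{t}$,
\begin{equation*}
\Theta_{\sqrt{t}/c}(x)^{2+2\epsilon}\leq C\,t^{2+2\epsilon}\int_M\Big|\Hess_h-\tfrac{1}{2t}g\Big|^2\,H\,dy.
\end{equation*}
This is obtained by the reasoning of \cite{ChC1}, applied to $h$ instead of $b^2/(4t)$: on a cone with vertex $x$ the Euclidean heat kernel satisfies $\Hess_h=\tfrac{1}{2t}g$, so smallness of the weighted $L^2$ norm of $\Hess_h-\tfrac{1}{2t}g$ implies that $B_{\sqrt{t}}(x)$ is Gromov-Hausdorff close to a ball in a cone. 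Decomposing $M$ into dyadic annuli $A_k=\{2^{k-1}\sqrt{t}\leq d_M(x,y)\leq 2^k\sqrt{t}\}$ and using the Li-Yau Gaussian heat kernel bound to control $\int_{A_k}H$, one extends the single-scale estimate to all of $\cC_{2+2\epsilon}^{2+2\epsilon}(t)$. Plugging the resulting bound into the H\"older inequality above then yields the theorem with exponent $2$ and prefactor $t^2$ on the right.

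The main obstacle is establishing the parabolic ChC1-style estimate itself. The elliptic proof of Theorem~\ref{t:fund} uses the free scale $r$ of $b$ together with the level-set integration identities of Section~2 and the sharp gradient estimate of Theorem~\ref{t:sharpGreen}; these have direct parabolic counterparts (the sharp Li-Yau gradient estimate, and heat-ball level sets of $h$), but now the natural scale is pinned to $\sqrt{t}$ rather than being free, so the dyadic summation and the control of annuli far from $\sqrt{t}$ must lean essentially on the Gaussian decay of $H$. Ensuring that the constants depend only on $n$, $\V_M$, and $\epsilon$ amounts to scale-invariance of the resulting bound; the full details are referred to \cite{CM4}.
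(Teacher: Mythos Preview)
The paper gives no proof of this theorem: it says only ``From Theorem~\ref{t:fund} we get (see \cite{CM4} for more details)'' and then records the statement. So there is nothing substantive in the paper to compare against beyond the one-line indication that the result is obtained from the elliptic Theorem~\ref{t:fund}, with details deferred to \cite{CM4}. Your proposal is in the same spirit --- parallel the elliptic argument, invoke a \cite{ChC1}-type almost-rigidity estimate, and leave the technical core to \cite{CM4} --- and in that sense it matches what the paper does.

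One difference worth noting: the paper's wording points to using Theorem~\ref{t:fund} itself (which already bounds $\Theta_r$) as the input, whereas you propose to prove a fresh parabolic \cite{ChC1}-type estimate with $h$ in place of $b^2$. The common engine is the same almost-rigidity argument, but your route redoes that argument for the heat kernel rather than quoting the elliptic bound on $\Theta_r$ and then passing to $\cC(t)$ via heat-kernel weights.

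There is, however, a genuine gap in your outline. Your H\"older step gives
\[
\cC^2(t)\;\leq\;\Bigl(\int_M\Theta_{d_M(x,y)}^{\,2+2\epsilon}\,H\,dy\Bigr)^{\frac{2}{2+2\epsilon}},
\]
and the single-scale parabolic estimate you describe would yield (with the dimensionally correct prefactor $t^2$, not $t^{2+2\epsilon}$) a bound of the form $\int_M\Theta^{\,2+2\epsilon}H\leq C\,t^2\int_M|\Hess_h-\tfrac{1}{2t}g|^2H$. Combining these gives
\[
\cC^2(t)\;\leq\;\Bigl(C\,t^2\!\int_M\bigl|\Hess_h-\tfrac{1}{2t}g\bigr|^2H\Bigr)^{\frac{2}{2+2\epsilon}},
\]
which is \emph{not} the stated inequality: the exponent $\tfrac{2}{2+2\epsilon}<1$ goes the wrong way when the right-hand side is small (and it does tend to zero, by Lemma~\ref{l:mono} and \eqr{e:Fasymp}). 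Given that the paper's statement carries an $\epsilon$-dependent $C$ and an unused constant $c$, it is plausible the intended left-hand side is $\cC^{\,2+2\epsilon}(t)$, matching Theorems~\ref{t:fund2a} and \ref{t:fund3a}; in that case your argument closes. But as literally stated, your final ``plugging in'' step does not deliver the claimed exponent.
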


\end{document}